\documentclass[11pt]{article}
  \textheight=185 mm 
 \textwidth=125 mm
\usepackage{amsthm, amsfonts, amsxtra, amssymb, amscd}

\usepackage{amsfonts,amsmath,amssymb,amscd,amsthm}
\usepackage[english]{babel} 
\usepackage[all]{xy}
\usepackage[backref, colorlinks, linktocpage, citecolor = blue, linkcolor = blue]{hyperref}


\newtheorem*{lemma*}{Lemma}
\newtheorem{lemma}[subsection]{Lemma}
\newtheorem*{theorem*}{Theorem}
\newtheorem{theorem}[subsection]{Theorem}
\newtheorem*{proposition*}{Proposition}
\newtheorem{proposition}[subsection]{Proposition}
\newtheorem*{corollary*}{Corollary}

\newtheorem{corollary}[subsection]{Corollary}

\theoremstyle{definition}
\newtheorem*{definition*}{Definition}
\newtheorem{definition}[subsection]{Definition}
\newtheorem*{example*}{Example}
\newtheorem{example}[subsection]{Example}

\theoremstyle{remark}
\newtheorem*{remark*}{Remark}
\newtheorem{remark}[subsection]{Remark}





\renewcommand{\phi}{\varphi}

\newcommand{\be}{\begin{enumerate}}
\newcommand{\ee}{\end{enumerate}}

\frenchspacing
\title{Norms and Cayley Hamilton algebras}
\author{Claudio procesi}
\begin{document}\begin{abstract} We develop the general Theory of    Cayley Hamilton algebras  using norms and compare with the approach, valid only in characteristic 0,  using traces and presented in a previous paper \cite{ppr}.
\end{abstract}
\maketitle
\hfill{\it\small To the memory of  Edoardo Vesentini}
\tableofcontents
\section*{Foreword}  A basic fact for an $n\times n$   matrix $a$ (with entries in a commutative ring)  is the construction of its characteristic polynomial $\chi_a(t):=\det(t-a)$, $t$ a variable,  and the Cayley Hamilton theorem $\chi_a(a)=0$.

The notion  of  Cayley Hamilton algebra (CH algebras for short), see Definition \ref{CHalg},  was introduced in 1987 by Procesi \cite{P5} as an axiomatic treatment  of the Cayley Hamilton theorem. This was done in order to clarify the Theory of $n$--dimensional representations, cf. Definition \ref{ndi}, of an associative and in general noncommutative algebra $R$. With 1, unless otherwise specified, (from now on just called {\em algebra}). 

The theory was developed only  in characteristic 0, for two reasons;  the first being that at that time it was not clear to the author if the characteristic free results of Donkin \cite{Don} and Zubkov \cite{Zub}  were sufficient to found the theory in general. The second reason was mostly because it looked  not likely that the {\em main theorem} \ref{immu}  could possibly hold in general.

The first concern can now be considered to have a positive solution, due to the contributions of several people, and we may take the book \cite{depr0} as reference. As for the second, that is the main theorem in positive characteristic, the issue remains unsettled. The present author feels that it should not be true in general but has no counterexamples.\medskip

A  partial theory in general characteristics replacing the trace with the determinant appears already in Procesi \cite{P2} and  \cite{P6}.

  For the general definition, see \ref{CHAm} for details,  we   follow Chenevier \cite{Che}:
\begin{definition}\label{ChAl}
Given an algebra $R$ over a commutative ring $A$ and $n\in\mathbb N$, an {\em $n$--norm} is a multiplicative polynomial law, as $A$--modules,  $N:R\to A$  homogeneous of degree $n$ (see Definition \ref{pola}).

An algebra $R$ over a commutative ring $A$  with an $n$--norm  $N:R\to A$ is a Cayley Hamilton algebra if, for every commutative $A$ algebra $B$, each $a\in B\otimes_AR$ satisfies its  {\em characteristic polynomial}  $\chi_a(t):=N(t-a)$, that is  $\chi_a(a)=0.$
\end{definition} The original definition over $\mathbb Q$  is through the axiomatization of a {\em trace},   and closer to the Theory of {\em pseudocharacters} see \cite{ppr}.  The definition via trace is also closer to the languange un Universal algebra, while the one using norms is more categorical in nature.

This                 paper is a continuation of   \cite{ppr} where we have developed the theory in characteristic 0  using the notion of trace algebra. Here instead the approach is characteristic free and through the axiomatization of Norms.\smallskip

The first   section  of this paper forms an exposition of known results  with one or two new facts or proofs.    We suggest the reader to start at \S \ref{scha}: $n$--Cayley--Hamilton  algebras.
In \S \ref{leCH} we treat  the general structure theory of Cayley--Hamilton algebras and some structure of  their $T$--ideals.
\section{Invariants and representations}
\subsection{$n$--dimensional representations} Let us recall some basic facts which are treated in detail in the forthcoming book with Aljadeff, Giambruno and Regev \cite{agpr}.\smallskip

For a given $n\in\mathbb N$ and a ring $A$  by $M_n(A)$ we denote the ring of $n\times n$ matrices with coefficients in $A$, by a symbol $(a_{i,j})$  we denote a matrix with entries $ a_{i,j}\in A,\ i,j=1,\ldots,n $.

In particular we will usually assume $A$  commutative so that the construction $A\mapsto M_n(A)$ is a functor from the category $\mathcal C$ of commutative rings to that $\mathcal R$ of associative rings. To a map $f:A\to B$ is associated a  map $M_n(f):M_n(A)\to M_n(B)$  in the obvious way $M_n(f)((a_{i,j})):= (f(a_{i,j}))$.
\begin{definition}\label{ndi}
By an  {\em $n$--dimensional representation}  of a ring $R$ we mean a homomorphism $f:R\to M_n(A)$ with $A$ commutative.
\end{definition}
The set valued functor $A\mapsto \hom_{\mathcal R}(R, M_n(A))$ is representable. That is, there is a commutative ring $T_n(R)$  and a natural isomorphism $j_A$
$$ \hom_{\mathcal R}(R, M_n(A))\stackrel{j_A}\simeq  \hom_{\mathcal C}(T_n(R),  A  ),\quad j_A:f\mapsto \bar f$$ given by the universal map $\mathtt j_R:R\to M_n(T_n(R))$  and a commutative diagram $f=M_n(\bar f)\circ \mathtt j_R$:
\begin{equation}\label{unpps}
\xymatrix{  R\ar@{->}[r]^{\mathtt j_R\qquad}\ar@{->}[rd]_f&M_n(T_n(R))\ar@{->}[d]^{M_n(\bar f)}\\
&M_n(A) }\quad  .
\end{equation} The map $\mathtt j_R:R\to M_n(T_n(R))$ is called the {\em universal $n$--dimensional representation of $R$ } or {  \em the universal map into $n\times n$ matrices}.

Of course it is possible that $R$ has no   $n$--dimensional representations, in which case $T_n(R)=\{0\}$. \medskip

Of course the same discussion can be performed when $R$  is in the category $\mathcal R_F$ of algebras over a commutative ring $F.$ 
In this case  the functor   $A\mapsto \hom_{\mathcal R_F}(R, M_n(A))$ is on commutative $F$ algebras and $T_n(R)$ is an $F$ algebra.\smallskip

The construction of $\mathtt j_R$ is in two steps. First one easily sees that when $R=F\langle x_i\rangle_{i\in I}$ is a free algebra  then:
\begin{proposition}\label{genma}
$T_n(R)=F[\xi_{h,k}^{(i)}]$  is the polynomial algebra over $F$ in the variables $\xi_{h,k}^{(i)},\ i\in I,\ h,k=1,\ldots, n$ and $\mathtt j_R(x_i)=\xi_i:=(\xi_{h,k}^{(i)})$ the {\em generic matrix} with entries $\xi_{h,k}^{(i)}$.
\end{proposition}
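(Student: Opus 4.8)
The plan is to prove this by comparing the two universal properties directly, since both the free algebra and the polynomial algebra are free objects in their respective categories. First I would invoke the defining universal property of the free associative $F$-algebra $R = F\langle x_i\rangle_{i\in I}$: for any $F$-algebra $S$, an $F$-algebra homomorphism $R \to S$ is completely and freely determined by the images of the generators $x_i$, so that
\[
\hom_{\mathcal R_F}(R, S) \;\simeq\; \prod_{i\in I} S
\]
naturally in $S$, the bijection sending $f$ to the family $(f(x_i))_{i\in I}$. Applying this with $S = M_n(A)$ and then identifying a matrix with its $n^2$ entries, $M_n(A) \simeq A^{\{(h,k)\}}$ as sets, naturally in the commutative $F$-algebra $A$, gives
\[
\hom_{\mathcal R_F}(R, M_n(A)) \;\simeq\; \prod_{i\in I} A^{\{(h,k)\}} \;=\; A^{\{(i,h,k)\,:\,i\in I,\ 1\le h,k\le n\}}.
\]

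Next I would recognise the right-hand side as the functor represented by a polynomial ring. By the universal property of $F[\xi_{h,k}^{(i)}]$, an $F$-algebra homomorphism to a commutative $F$-algebra $A$ is determined freely by the image of each variable, so
\[
\hom_{\mathcal C_F}\bigl(F[\xi_{h,k}^{(i)}],\,A\bigr) \;\simeq\; A^{\{(i,h,k)\}},
\]
again naturally in $A$, the correspondence sending a homomorphism $g$ to the family $\bigl(g(\xi_{h,k}^{(i)})\bigr)$. Composing the two displayed isomorphisms yields the desired natural isomorphism $j_A$. Explicitly, to a representation $f: R \to M_n(A)$ it associates the homomorphism $\bar f: F[\xi_{h,k}^{(i)}] \to A$ determined by $\bar f(\xi_{h,k}^{(i)}) = \bigl(f(x_i)\bigr)_{h,k}$, the $(h,k)$-entry of the matrix $f(x_i)$; its inverse sends $g$ to the representation $x_i \mapsto \bigl(g(\xi_{h,k}^{(i)})\bigr)_{h,k}$. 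By Yoneda, the representing object is unique up to canonical isomorphism, so this identifies $T_n(R)$ with $F[\xi_{h,k}^{(i)}]$.

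Finally I would pin down the universal map $\mathtt j_R$ by specialising $j_A$ to $A = T_n(R) = F[\xi_{h,k}^{(i)}]$ and taking $\bar f = \id$: the representation corresponding to the identity homomorphism sends $x_i$ to the matrix whose $(h,k)$-entry is $\xi_{h,k}^{(i)}$, that is to the generic matrix $\xi_i$, as claimed. The only point genuinely requiring care, though it is routine bookkeeping rather than a real obstacle, is verifying that both families of bijections are \emph{natural} in $A$, i.e.\ compatible with the maps $M_n(\phi)$ and $\phi$ induced by a ring homomorphism $\phi: A\to B$; granting this, the commutativity of the triangle $f = M_n(\bar f)\circ \mathtt j_R$ in \eqref{unpps} is immediate by comparing $(h,k)$-entries of the images of each generator $x_i$.
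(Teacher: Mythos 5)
Your argument is correct and is exactly the standard identification the paper has in mind (the paper states this as something "one easily sees" and gives no further proof): the free algebra represents $S\mapsto\prod_i S$, a matrix algebra is the product of its entry functors, and the polynomial ring represents the resulting product of copies of $A$, with the generic matrices arising from the identity map. Nothing further is needed.
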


For a general algebra $R$  one may present it as a quotient $R=F\langle x_i\rangle/I$ of a free algebra. Then $\mathtt j_{F\langle x_i\rangle}(I)$ generates in $M_n( F[\xi_{h,k}^{(i)}])  $ an ideal which is, as any ideal in a matrix algebra, of the form  $M_n(J),$ with $J$ an ideal of $F[\xi_{h,k}^{(i)}]$. Then the universal map for $R$ is given by  $$\begin{CD} F\langle x_i\rangle@>>> M_n(F[\xi_{h,k}^{(i)}] ) \\@VVV@VVV\\
R@>\mathtt j_R>> M_n(F[\xi_{h,k}^{(i)}]/J).
\end{CD}$$  By the universal property this is   independent of the presentation of $R$.

\subsection{Generic matrices and invariants}
\begin{definition}\label{genmad}
The subalgebra  $ F\langle \xi_i|_{i\in I}\rangle$  of  $M_n( F[\xi_{h,k}^{(i)}]),$ $ i\in I,\ h,k=1,\ldots, n  $ generated by the matrices $\xi_i$  is called the algebra of  {\em generic matrices.}
\end{definition}

A classical Theorem of Amitsur  states that, if $F$ is a domain then  $ F\langle \xi_i|_{i\in I}\rangle$ is a domain. If $I$ has $\ell$ elements we also denote $F\langle \xi_i|{i\in I}\rangle= F_n (\ell) $. If $\ell\geq  2$ then   $F_n (\ell) $ has a division ring of quotients $D_n(\ell)$  which is of dimension $n^2$ over its center $Z_n(\ell)$. These algebras have been extensively studied.    One defines first  the commutative subalgebra  $T_n(\ell)\subset  Z_n(\ell)$ generated by the coefficients $\sigma_i(a)$ of the characteristic polynomial  $\det(t-a)=t^n+\sum_{i=1}^n(-1)^i\sigma_i(a)t^{n-i},$ $ \forall a\in F_n(\ell)$.    Next define $\mathcal S_n(\ell)=F_n(\ell)T_n(\ell)\subset D_n(\ell),$    one can understand $\mathcal S_n(\ell)$ and $T_n(\ell)$  by invariant theory, see Remark \ref{sftg}.\label{prSl}

The invariant theory involved is presented in \cite{P3} when $F$ is a field of characteristic 0, and may be considered as the {\em first and second fundamental theorem} of matrix invariants. For a characteristic free treatment see the book \cite{depr0}. In general, for simplicity of exposition  assume that $F$ is an infinite field:
\begin{theorem}\label{FFT}
The algebra $T_n(\ell)$ is the algebra of polynomial invariants  under the simultaneous action of $GL(n,F)$ by conjugation on the space $M_n(F)^\ell$ of $\ell$--tuples of $n\times n$ matrices.\smallskip

The algebra $\mathcal S_n(\ell)$ is the algebra of $GL(n,F)$--equivariant polynomial maps  from  the space $M_n(F)^\ell$ of $\ell$--tuples of $n\times n$ matrices to $M_nF)$.

\end{theorem}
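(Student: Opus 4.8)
The plan is to prove the two assertions as the reverse of a pair of easy inclusions, so I would first record the trivial direction. For any $a\in F_n(\ell)$ the entries of $a$ are polynomials in the $\xi^{(i)}_{h,k}$ that transform equivariantly under simultaneous conjugation, so $\chi_a(t)=\det(t-a)$ is a conjugation-invariant polynomial in $t$ and each coefficient $\sigma_i(a)$ is an invariant; hence $T_n(\ell)\subseteq F[M_n(F)^\ell]^{\GL(n,F)}$. Likewise each generic matrix $\xi_i$ is an equivariant map $M_n(F)^\ell\to M_n(F)$ and the elements of $T_n(\ell)$ are invariant scalars, so every element of $\mathcal S_n(\ell)=F_n(\ell)T_n(\ell)$ is an equivariant map. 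All the substance lies in the reverse inclusions, namely that every invariant belongs to $T_n(\ell)$ and every equivariant map belongs to $\mathcal S_n(\ell)$; these are exactly the first (and, for the relations, the second) fundamental theorem of matrix invariants.

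Over a field of characteristic $0$ I would argue as follows. Because $F$ is infinite the invariant ring is the direct sum of its multihomogeneous components, and by full polarization and restitution it suffices to identify the \emph{multilinear} invariants and concomitants. Writing $V=F^n$ so that $M_n(F)=\operatorname{End}(V)$, I would invoke Schur--Weyl duality: under $\operatorname{End}(V)^{\otimes\ell}\cong\operatorname{End}(V^{\otimes\ell})$ the $\GL(V)$-invariant linear functionals on $\operatorname{End}(V)^{\otimes\ell}$ are spanned by the maps $A_1\otimes\cdots\otimes A_\ell\mapsto \operatorname{tr}\!\big(\sigma\circ(A_1\otimes\cdots\otimes A_\ell)\big)$ indexed by $\sigma\in\mathfrak S_\ell$. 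Expanding such a functional over the cycles of $\sigma$ exhibits it as a product of traces of products $\operatorname{tr}(A_{i_1}\cdots A_{i_r})$. Since $\operatorname{tr}(b)=\sigma_1(b)\in T_n(\ell)$ for every $b\in F_n(\ell)$ and $T_n(\ell)$ is a subalgebra, every such product lies in $T_n(\ell)$; in characteristic $0$ the $\sigma_i$ are recovered from power traces by Newton's identities, so $T_n(\ell)$ is precisely the trace algebra and the multilinear invariants fill it out. Restitution then yields the first statement.

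For the second statement I would run the same duality one step further: the $\GL(V)$-equivariant multilinear maps $\operatorname{End}(V)^{\otimes\ell}\to\operatorname{End}(V)$ are spanned by concomitants $A_1\otimes\cdots\otimes A_\ell\mapsto \big(\prod \operatorname{tr}(\cdots)\big)\,A_{j_1}\cdots A_{j_s}$, that is a product of trace invariants times a single noncommutative monomial in the arguments. Each such map is a product of an element of $T_n(\ell)$ with a monomial in the generic matrices, hence lies in $F_n(\ell)T_n(\ell)=\mathcal S_n(\ell)$, and conversely these generate. After polarization this identifies $\mathcal S_n(\ell)$ with the algebra of equivariant polynomial maps and exhibits it as a finitely generated module over $T_n(\ell)$.

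The main obstacle is the passage to an arbitrary infinite field, where polarization no longer reduces all invariants to multilinear ones: in characteristic $p$ the trace alone does not generate, and the coefficients $\sigma_i$ of the characteristic polynomials of the products $\xi_{i_1}\cdots\xi_{i_r}$ must be used as genuinely new generators. Here I would replace the Schur--Weyl step by the characteristic-free first fundamental theorem of Donkin \cite{Don}, which states precisely that the invariant ring is generated by these coefficients --- exactly the generators of $T_n(\ell)$ --- together with Zubkov's \cite{Zub} description of the relations for the second fundamental theorem; the equivariant statement then follows by the same module-theoretic argument. This is the material assembled in \cite{depr0}, to which the characteristic-free proof is ultimately reduced, while \cite{P3} supplies the self-contained treatment in characteristic $0$ sketched above.
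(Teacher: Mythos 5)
Your proposal is correct and follows the same route as the paper, which states this theorem without proof and simply points to Procesi \cite{P3} for characteristic $0$ and to Donkin \cite{Don}, Zubkov \cite{Zub} and \cite{depr0} for the characteristic-free case. Your sketch (easy inclusion, reduction to multilinear invariants by polarization, Schur--Weyl/double-centralizer description of the multilinear trace invariants and concomitants, Newton's identities, and the deferral to Donkin--Zubkov in positive characteristic) is exactly the classical argument of those references, so there is nothing to add.
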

As usual together with a   first fundamental theorem one may ask for a  {\em second fundamental theorem} which was proved independently by Procesi \cite{P3} and Razmyslov \cite{Raz3}  when $F$ has characteristic 0 and by Zubkov \cite{Zub} in general.  In this paper it will appear as characterization of free Cayley--Hamilton algebras, Theorem \ref{DoZu}.
 \smallskip

For a general algebra, quotient of the free algebra
again one may add to $R$  the algebra $T_n(R)$ generated    by the coefficients of the characteristic polynomial  $\sigma_i(a),\ \forall a\in j_R(R)$.  

\subsection{Symmetry}  The functor $\hom_{\mathcal R}(R, M_n(A))$  has a group of symmetries: the {\em projective linear group $PGL(n)$}.

It is best to define this as a representable group valued functor  on the category $\mathcal C$ of commutative rings. The functor associates to a commutative ring $A$ the group $\mathfrak G_n(A):=Aut_A(M_n(A))$ of $A$--linear automorphisms of the matrix algebra $M_n(A)$. 
One has a natural homomorphism of the general linear group $GL(n,A)$ to $\mathfrak G_n(A)$ which associates to an invertible matrix $X$  the inner automorphism  $a\mapsto XaX^{-1}$.

The functor {\em general linear group $GL(n,A)$} is represented by   the  Hopf algebra   $\mathbb Z[x_{i,j}][d^{-1}],\ i,j=1,\ldots,n$   with $d=\det(X),\ X:=(x_{i,j})$  with the usual  structure given compactly by comultiplication $\delta$, antipode $S$ and counit $\epsilon$:
$$\delta(X)=X\otimes X,\ S(X):=X^{-1},\ \epsilon:X\to 1_n.$$

The functor $\mathfrak G_n(A)$ is represented by  the sub  Hopf algebra, of $GL(n,A)$, $P_n\subset \mathbb Z[x_{i,j}][d^{-1}]$  formed  of elements homogeneous of degree 0. It has a basis, over $\mathbb Z$, of elements $ad^{-h}$  where $a$  is a doubly standard tableaux with no rows of length $n$  and of degree $h \cdot n$. For a proof see \cite{agpr} Theorem 3.4.21.

\begin{remark}\label{chab}
Of course if we work in the category of commutative $F$ algebras we replace $P_n$ with     $P_n\otimes_{\mathbb Z}F\subset F[x_{i,j}][d^{-1}] $.             
\end{remark}
The identity map $1_{P_n}:P_n\to P_n$  induces a {\em generic automorphism of $M_n(P_n)$}  which can be given  by {\em conjugation} by the generic matrix $X$:
 \begin{equation}\label{cogm}
J:M_n(P_n)\to M_n(P_n),\quad J:a\mapsto XaX^{-1}\in M_n(P_n).
\end{equation}  Observe that $X$ is not  a matrix in $P_n$ only the entries of $XaX^{-1}$ are in $P_n$ i.e. are homogeneous of degree 0 in the entries of $X$. If we denote by  $y_{i,j}$ the $i,j$ entry  of $X^{-1}$ (given by the usual formula as the cofactor divided by the determinant), we have 
$$Xe_{i,j}X^{-1}= (z_{h,k}^{(i,j)}),\quad z_{h,k}^{(i,j)}=x_{h,i}y_{j,k}.$$ In fact the entries  $z_{h,k}^{(i,j)}=x_{h,i}y_{j,k} $ of the matrices $Xe_{i,j}X^{-1}$ generate $P_n$ as algebra and the  ideal of relations is generated by the relations expressing the fact that  the map $e_{i,j}\mapsto  (z_{h,k}^{(i,j)})$ is a homomorphism (and then automatically an isomorphism).\smallskip

Then given an automorphism $g\in Aut_A(M_n(A))$  its associated classifying map $\bar g:P_n\to A$ fits in the  commutative diagram:
\begin{equation}\label{unpp1}
\xymatrix{  M_n(P_n)\ar@{->}[r]^{J}\ar@{->}[d]_{M_n(\bar g)}&M_n(P_n)\ar@{->}[d]^{M_n(\bar g)}\\
M_n(A)\ar@{->}[r]^g&M_n(A) } 
\end{equation}
 Finally we have an action of $\mathfrak G_n(A)$ on $\hom_{\mathcal R}(R, M_n(A))$  by composing a map $f$  with an automorphism $g$. One has a commutative diagram
\begin{equation}\label{unpp}
\xymatrix{  R\ar@{->}[r]^{\mathtt j_R\qquad}\ar@{->}[d]_f&M_n(T_n(R))\ar@{->}[d]^{M_n(\overline{g\circ f)}}\\
M_n(A)\ar@{->}[r]^g&M_n(A) } .
\end{equation}  Assume now that $R$  is an $F$ algebra so also $T_n(R)$ is an $F$ algebra and    $M_n(T_n(R))=T_n(R)\otimes_F M_n(F)$. An  automorphism $g$ of $M_n(F)$ induces  an  automorphism $1\otimes   g$ of $M_n(T_n(R))$. Take  $A=T_n(R)$ and $f= \mathtt  j_R$ in Formula \eqref{unpp} and  set $\hat g:=\overline{1\otimes g\circ \mathtt  j_R}$, an automorphism of $T_n(R)$,  so that $ 1\otimes g\circ \mathtt  j_R=M_n(\hat g)\circ \mathtt  j_R$. If $g_1,g_2$ are two automorphisms of $M_n(F)$ we have:
$$M_n(   \widehat {g_1\circ   g_2} ) \circ\mathtt  j_R= 1\otimes (g_1\circ g_2)\circ \mathtt  j_R=1\otimes g_1 \circ  M_n( \hat g_2 ) \circ\mathtt  j_R  $$ $$= M_n( \hat g_2 ) \circ 1\otimes  g_1\circ \mathtt  j_R=  M_n( \hat g_2 )  \circ M_n( \hat g_1 ) \circ \mathtt  j_R$$implies $ \widehat {g_1\circ   g_2}  = \hat g_2  \circ  \hat g_1 $.
Which  implies that the map $g\mapsto  \hat g$ is an antihomomorphism from $Aut_{F}(M_n(F))$ to 
 $Aut(T_n(R)).$ Finally
\begin{proposition}\label{inva} The map  $g\mapsto g\otimes \hat g^{-1} $ is a homomorphism from the group  $Aut_FM_n(F)$ to the group of  all automorphisms of $M_n(T_n(R))$. The image of $R$ under $j_R$ is formed of invariant elements. 

\end{proposition}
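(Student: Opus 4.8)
The plan is to read the claimed map as the automorphism of $M_n(T_n(R))=T_n(R)\otimes_F M_n(F)$ that acts by $\hat g^{-1}$ on the coefficient factor $T_n(R)$ and by $g$ on the matrix factor $M_n(F)$; in the functorial notation of the excerpt this is $M_n(\hat g^{-1})\circ(1\otimes g)$, written $g\otimes\hat g^{-1}$ in the statement. Since $\hat g^{-1}\in Aut(T_n(R))$ and $g\in Aut_F(M_n(F))$ are algebra automorphisms acting on disjoint tensor factors, the two partial automorphisms $M_n(\hat g^{-1})=\hat g^{-1}\otimes 1$ and $1\otimes g$ commute and each is an algebra automorphism, so their composite is again an algebra automorphism of $M_n(T_n(R))$. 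This already places $g\mapsto g\otimes\hat g^{-1}$ inside the group of all automorphisms of $M_n(T_n(R))$; it then remains to verify multiplicativity and invariance.

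For multiplicativity I would combine the two facts established just before the statement. First, the excerpt shows $g\mapsto\hat g$ is an antihomomorphism, $\widehat{g_1\circ g_2}=\hat g_2\circ\hat g_1$, and hence $g\mapsto\hat g^{-1}$ is an honest homomorphism: $\widehat{(g_1\circ g_2)}^{\,-1}=\hat g_1^{-1}\circ\hat g_2^{-1}$. Second, because the two tensor factors commute, composition of such diagonal automorphisms is computed factorwise, $(\hat g_1^{-1}\otimes g_1)\circ(\hat g_2^{-1}\otimes g_2)=(\hat g_1^{-1}\circ\hat g_2^{-1})\otimes(g_1\circ g_2)$. Putting these together identifies this composite with $\widehat{(g_1\circ g_2)}^{\,-1}\otimes(g_1\circ g_2)$, which is exactly the image of $g_1\circ g_2$; hence the map is a homomorphism. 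Taking $\hat g^{-1}$ rather than $\hat g$ is precisely what converts the antihomomorphism into a homomorphism, so the inversion is essential rather than cosmetic.

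The invariance of $\mathtt j_R(R)$ is then a one line cancellation built on the defining relation $1\otimes g\circ\mathtt j_R=M_n(\hat g)\circ\mathtt j_R$ recorded above the statement. Writing the image automorphism as $M_n(\hat g^{-1})\circ(1\otimes g)$ and applying it to $\mathtt j_R(r)$, I would replace $(1\otimes g)(\mathtt j_R(r))$ by $M_n(\hat g)(\mathtt j_R(r))$ via that relation, and then use functoriality $M_n(\hat g^{-1})\circ M_n(\hat g)=M_n(\hat g^{-1}\circ\hat g)=\id$ to return to $\mathtt j_R(r)$. Thus every $\mathtt j_R(r)$ is a fixed point, i.e. the image of $R$ under $\mathtt j_R$ is invariant.

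There is no genuinely hard step here: the content is entirely the interplay between the antihomomorphism $g\mapsto\hat g$ and the order of the tensor factors, so the only place to be careful is conventional. I would fix once and for all that $g$ acts on the matrix slot and $\hat g$ on the coefficient slot of $T_n(R)\otimes_F M_n(F)$, so that $M_n(\hat g)$ and $1\otimes g$ genuinely commute and cancel as above; with that convention pinned down, both assertions follow formally.
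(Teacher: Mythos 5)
Your proof is correct and follows the same route the paper intends: the displayed computation immediately preceding the statement establishes the antihomomorphism $\widehat{g_1\circ g_2}=\hat g_2\circ\hat g_1$ and the relation $1\otimes g\circ\mathtt j_R=M_n(\hat g)\circ\mathtt j_R$, and the paper leaves the proposition as the formal consequence you spell out (inversion turns the antihomomorphism into a homomorphism on the coefficient factor, and the cancellation $M_n(\hat g^{-1})\circ M_n(\hat g)=\id$ gives invariance of $\mathtt j_R(R)$). Nothing is missing.
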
 

 This is particularly simple when $F$ is an infinite field. In this case the group $Aut_FM_n(F)=PGL(n,F)$ is Zariski dense in $PGL(n,\bar F)$ with $\bar F $ an algebraic closure of $F$. Otherwise this can be set in the language of polynomial laws. 
 \begin{proposition}\label{inva1}   For every commutative $F$ algebra $B$,  an automorphism $g\in Aut_BM_n(B)$  induces an automorphism of the $B$ algebra $T_n(R)\otimes_FB$. The map  $g\mapsto g\otimes \hat g^{-1} $ is a homomorphism from the group  $Aut_BM_n(B)$ to the group of  all automorphisms of $M_n( B)\otimes_B(T_n(R)\otimes_FB)=M_n(T_n(R)\otimes_FB)$. The image of $_BR$ under $j_R\otimes 1_B$ is formed of invariant elements. 

\end{proposition}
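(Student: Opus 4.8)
The plan is to deduce this from Proposition \ref{inva} by a base-change argument, exploiting that everything constructed in the paragraph preceding Proposition \ref{inva} --- the automorphism $\hat g$, the antihomomorphism $g\mapsto\hat g$, and the homomorphism $g\mapsto g\otimes\hat g^{-1}$ --- is formulated over an \emph{arbitrary} commutative base ring, not merely over an infinite field. The idea is to apply that same construction with the base ring $F$ replaced by the commutative $F$-algebra $B$ and the algebra $R$ replaced by the $B$-algebra ${}_BR:=R\otimes_FB$. Granting the identification $T_n({}_BR)=T_n(R)\otimes_FB$, the assertion of Proposition \ref{inva1} becomes literally the assertion of Proposition \ref{inva} for the pair $({}_BR,\,B)$.

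First I would establish this last identification, which is the technical heart. For a commutative $B$-algebra $C$ one has the chain of bijections
\begin{equation*}
\hom_{\mathcal C_B}(T_n(R)\otimes_FB,\,C)\;\cong\;\hom_{\mathcal C}(T_n(R),\,C)\;\cong\;\hom_{\mathcal R_F}(R,\,M_n(C))\;\cong\;\hom_{\mathcal R_B}({}_BR,\,M_n(C)),
\end{equation*}
where the first bijection is the extension-of-scalars adjunction for commutative algebras (viewing $C$ as an $F$-algebra via $F\to B\to C$), the second is the defining universal property of $T_n(R)$ from \eqref{unpps}, and the third is the extension-of-scalars adjunction for associative algebras applied to the $B$-algebra $M_n(C)$. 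All three are natural in $C$, so by Yoneda the functor $C\mapsto\hom_{\mathcal R_B}({}_BR,\,M_n(C))$ is represented by $T_n(R)\otimes_FB$, i.e. $T_n({}_BR)=T_n(R)\otimes_FB$. Tracing the identity of $T_n(R)\otimes_FB$ back through the chain identifies the universal map $\mathtt j_{{}_BR}$ with $\mathtt j_R\otimes 1_B$, using $M_n(T_n(R))\otimes_FB=M_n(T_n(R)\otimes_FB)$.

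With this identification in hand I would transcribe the construction preceding Proposition \ref{inva} verbatim. Since $T_n({}_BR)=T_n(R)\otimes_FB$ is a $B$-algebra we have $M_n(T_n({}_BR))=(T_n(R)\otimes_FB)\otimes_BM_n(B)=M_n(T_n(R)\otimes_FB)$, and an automorphism $g\in Aut_BM_n(B)$ induces $1\otimes g$ on this algebra. Setting $\hat g:=\overline{1\otimes g\circ\mathtt j_{{}_BR}}$ yields an automorphism of $T_n(R)\otimes_FB$, and the identical computation gives $\widehat{g_1\circ g_2}=\hat g_2\circ\hat g_1$, so $g\mapsto\hat g$ is an antihomomorphism and $g\mapsto g\otimes\hat g^{-1}$ is a homomorphism into the automorphism group of $M_n(T_n(R)\otimes_FB)$. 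The invariance of the image of ${}_BR$ under $\mathtt j_R\otimes 1_B=\mathtt j_{{}_BR}$ is then read off from diagram \eqref{unpp} applied to $({}_BR,\,B)$, exactly as in Proposition \ref{inva}.

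The only step requiring genuine care is the base-change identification, and in particular the compatibility of the two extension-of-scalars adjunctions with the matrix functor: one must check that $M_n(C)$ regarded as an $F$-algebra through $F\to B\to C$ coincides with $M_n$ of $C$ regarded as an $F$-algebra, and that a $B$-algebra homomorphism out of $R\otimes_FB$ is the same datum as an $F$-algebra homomorphism out of $R$ with values in $M_n(C)$. These compatibilities are formal but must be verified so that the naturality hypotheses of Yoneda genuinely hold; once they are secured, the remainder is a word-for-word repetition of the argument establishing Proposition \ref{inva}.
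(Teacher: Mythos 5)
Your proposal is correct and is essentially the argument the paper intends: the paper states Proposition \ref{inva1} without proof, remarking only that the statement of Proposition \ref{inva} ``can be set in the language of polynomial laws,'' i.e.\ that the construction of $\hat g$ and the antihomomorphism property carry over verbatim once the base is changed from $F$ to $B$. Your identification $T_n({}_BR)=T_n(R)\otimes_FB$ via the chain of adjunctions and Yoneda, together with $\mathtt j_{{}_BR}=\mathtt j_R\otimes 1_B$, supplies exactly the detail the paper leaves implicit.
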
  The functor $\mathfrak G_n(A)\times \hom_{\mathcal R}(R, M_n(A)$ is represented by $ P_n\otimes T_n(R)$. Given  $f\in \hom_{\mathcal R}(R, M_n(A))$ associated to $\bar f:T_n(R)\to A$ and an automorphism $g$  of $M_n(A)$  associated to $\bar g:P_n\to A$ the pair   is associated to 
$$  P_n\otimes T_n(R) \stackrel{\bar g\otimes \bar f  }\longrightarrow A\otimes A  \stackrel{m }\longrightarrow A,\quad m(a\otimes b):=ab. $$
  The natural transformation $\mathfrak G_n(A)\times \hom_{\mathcal R}(R, M_n(A))\to  \hom_{\mathcal R}(R, M_n(A))$ induces the coaction on the classifying rings $\eta:T_n(R)\to P_n\otimes T_n(R)$.

Thus the composition $g\circ f\in \hom_{\mathcal R}(R, M_n(A))$ is associated to 
$$\overline{g\circ f}: T_n(R)\stackrel{\eta }\longrightarrow P_n\otimes T_n(R) \stackrel{\bar g\otimes \bar f  }\longrightarrow A\otimes A  \stackrel{m }\longrightarrow A,\quad m(a\otimes b):=ab. $$

In particular for $R= \mathbb Z\langle x_i\rangle_{ i\in I} $  we have  $T_n( \mathbb Z\langle x_i\rangle_{ i\in I}) $ is the polynomial ring in the entries  $\xi_{i,j}^{(k)}$ of the generic matrices $\xi_k$  and the action is via the formula
$$\xi_k\mapsto X\xi_kX^{-1},\ \xi_{i,j}^{(k)} \mapsto\sum_{a,b}x_{i,a}\xi_{a,b} ^{(k)}y_{b,j},\ X^{-1}=(y_{i.j}).$$ We have $x_{i,a} y_{b,j}\in P_n$. The induced map  $T_n( \mathbb Z\langle x_i\rangle_{ i\in I})\to  P_n\otimes T_n(F\langle x_i\rangle_{ i\in I})$ can be identified to the coaction.  
\begin{equation}\label{coac}
\eta:T_n(R)\to P_n\otimes T_n(R),\quad\eta(\xi_{i,j}^{(k)})=\sum_{a,b=1}^nx_{i,a}\xi_{a,b} ^{(k)}y_{b,j}.
\end{equation}  
 \begin{remark}\label{abin} In general the invariants are the elements invariant under the {\em generic automorphism}. In our case:
\begin{equation}\label{finv}
T_n(R)^{\mathfrak G_n}:=\{a\in T_n(R)\mid \eta(a)=1\otimes a\}.
\end{equation}
\end{remark}

\section{$n$--Cayley--Hamilton  algebras\label{scha}} 
\subsection{Polynomial laws and determinants} Given a commutative ring $F$,  an $F$ module $M$, and a commutative $F$ algebra $B$  one has the {\em base change} functor  from $F$--modules to $B$--modules:
\begin{equation}\label{bac}
_BM:=B\otimes_FM.
\end{equation} Recall that, in  \cite{Roby} and  \cite{Roby1}, Roby  defines:
\begin{definition}\label{pola}
A {\em polynomial law} between two $F$  modules $M,N$  is a natural transformation  of the two   set valued functors on the category $\mathcal C_F$ of commutative $F$ algebras:
\begin{equation}\label{polaw}
f_B:_B\!M\to _B\!N,\quad B\in \mathcal C_F.
\end{equation}             
\end{definition} 
Such a law is {\em homogeneous of degree $n$} if:
$$f_B(b a)=b^nf_B(a),\ \forall b\in B,\  \forall a\in _B\!M,\ \forall B\in \mathcal C_F.$$
Given $M$  let us denote, for $N$ any   module, by $\mathcal P_n(M,N)$  the  polynomial laws homogeneous of degree $n$ from $M$ to $N$. This, by Roby's Theory,  is a set valued functor  on     modules $N$ and 
 it is representable. 

This is done by constructing  the   divided powers $\Gamma_n(M)=\Gamma_{n,F}(M)$  (over the base $F$) together with the map $i_M:m\mapsto m^{[n]}$ of $M$   to  $\Gamma_n(M)$. \smallskip

The   divided powers $\Gamma_n(M)$ are constructed by generators and relations. The construction is compatible with base change, that is given a commutative $F$ algebra $B$ we have:
$$\Gamma_{n,B}(_BM) = _B\!\Gamma_{n,F}(M).$$

In fact in most applications there is a more concrete description of  $\Gamma_n(M)$. For instance if $M$ is a free (or just projective) $F$ module
one describes the divided power as symmetric  tensors: \begin{equation}\label{simten}
\Gamma_n(M)\simeq (M^{\otimes n}) ^{S_n},\quad M^{\otimes n}=M\otimes_FM\otimes_FM\otimes\cdots\otimes_FM.
\end{equation} From now on we assume to be in this case. Notice that by change of coefficient rings if $F\to A$ is a map of commutative rings we have $ _AM:=A\otimes_FM$ and  
$$ _AM^{\otimes n}=A\otimes_FM^{\otimes n},\quad _A\Gamma_n(M)=\Gamma_n(_AM). $$

\begin{remark}\label{fpf}
In general a polynomial law  is not determined by the map $f:M\to N$.  Under further hypothesis this is the case, the simplest being that the base commutative ring $F$ contains an infinite field.
\end{remark}
\subsubsection{Multiplicative polynomial laws}If we have two $F$--algebras $R,S$  we have the notion of {\em multiplicative polynomial law  $d:R\to S$}  that is 
$$ d(ab)=d(a)d(b),\ \forall a,b\in _B\!\!R,\ \forall B.$$

One  proves that, if $R$  is an algebra, then  $\Gamma_n(R)$ is also an algebra, which we call {\em the $n^{th}$--Schur algebra of $R$}, see \cite{depr0},  
 and $i_R$ is a universal  multiplicative polynomial map, homogeneous of degree $n$. That is any multiplicative polynomial law, homogeneous of degree $n$ from $R$ to an algebra $S$  factors through $i_R$ and a homomorphism of $\Gamma_n(R)$ to $ S $.   

Denoting by $\mathcal M_n(R,S)$  the set of   multiplicative polynomial laws homogeneous of degree $n$ from $R$ to $S$ we have the isomorphism:
\begin{equation}\label{mulre} \phi: d\mapsto d\circ i_R,\quad 
\hom_{\mathcal R}(\Gamma_n(R),S)\stackrel{\phi}\simeq \mathcal M_n(R,S),\quad S\in\mathcal R.
\end{equation} When  we have $\Gamma_n(R) =(R^{\otimes n})^{S_n}$, Formula \eqref{simten},
the algebra structure on $\Gamma_n(R)$ is  induced by the tensor product of algebras.     
 
 If $S$ is any algebra we denote by $S_{ab}$  its {\em abelianization}, that is $S$ modulo the ideal $C_S$ generated by all commutators $[x,y]=x\cdot y-y\cdot x$.   
     
When we restrict, in Formula \eqref{mulre}, to $S$  commutative, this functor, on commutative algebras, is    represented by  $\pi:\Gamma_n(R)\to \Gamma_n(R)_{ab}$  that is the abelianization  of $\Gamma_n(R)$.
\begin{equation}\label{mulrea}
\mathcal M_n(R,A)\simeq \hom_{\mathcal C}(\Gamma_n(R)_{ab},A),\quad A\in\mathcal C.
\end{equation} 
This is an object studied  by Roby in \cite{Roby1}, and by Ziplies in \cite{Ziplies1} and discussed in some detail in my book with De Concini \cite{depr0}. 
 
\subsection{$n$--Cayley--Hamilton  algebras}
Consider an algebra $R$,  over a commutative ring $Z$.
\begin{definition}\label{lanorm}
A multiplicative polynomial law, of $Z$ algebras, homogeneous  of degree $n$,  $N:R\to Z$  wlll be called a {\em norm}. 
\end{definition}  One may apply Roby's theory. First $N(r)=\bar N(r^{\otimes n})$ then, by Formula \eqref{mulrea},
   the map $\bar N$ factors through the abelianization $\Gamma_n(R)_{ab}$ of $\Gamma_n(R)$ (we still denote it by $\bar N$). \begin{equation}\label{unpp2}
\xymatrix{  R\ar@{->}[r]^{r\mapsto r^{\otimes n}}\ar@{->}[rd]_{N}&\Gamma_n(R) \ar@{->}[d]^{ \bar N}\ar@{->}[r]^\pi&\Gamma_n(R)_{ab}\ar@{->}[d]^{ \bar N}\\
&Z\ar@{->}[r]^1&Z}  \qquad  \begin{matrix}
\\\\\Gamma_n(R) \ \text{the}\ n^{th} \ \text{divided power}. 
\end{matrix} 
\end{equation}

  For $a\in B\otimes_ZR$ and a commutative variable $t$ we have $t-a\in  B[t]\otimes_ZR$ and can define the characteristic polynomial        of $a$ as $\chi_a(t):=N(t-a)\in B[t]$.

\begin{definition}\label{CHAm}
 We say that $R$ is an $n$--Cayley--Hamilton  algebra if one has the analogue of the Cayley--Hamil\-ton Theorem $\chi_a(a)=0,\ \forall a\in _B\!R,\ \forall B$.

\end{definition}  Sometimes we will use the short notation CH--algebra for Cayley--Hamil\-ton  algebra.
\begin{remark}\label{vl} In \cite{Pr8} we treated the Theory in characteristic 0. In this case it is better to use instead of the norm the trace. This at the same time simplifies the treatment but also yields    stronger results due to the linear reductivity of the linear group. 

\end{remark}

 We start with an important example.\medskip

If $F$ is an infinite field  and $V$ a vector space of some finite dimension $m$ over $F$  one has for $R=End_F(V)$ that
$$(End_F(V)^{\otimes n})^{S_n} = End_F(V ^{\otimes n} )^{S_n} = End_{F[S_n]}(V ^{\otimes n}) .$$
The algebra $End_{F[S_n]}(V ^{\otimes n})$ is spanned by the elements $g^{\otimes n}$ where $g\in GL(V)$.

If the characteristic of $F$ is 0 one has  the decomposition 
$$V ^{\otimes n}=\oplus_{\lambda\vdash n\mid ht(\lambda)\leq m} M_\lambda\otimes S_\lambda(V)$$  where  $M_\lambda$ is the irreducible representation  of  $S_n$  corresponding to $\lambda$ and $S_\lambda(V)$, a Schur functor, the  corresponding  irreducible representation  of  $GL(V)$. One proves that
\begin{equation}\label{swd}
End_F(V ^{\otimes n} )^{S_n}=\oplus_{\lambda\vdash n\mid ht(\lambda)\leq m}   End( S_\lambda(V) ).
\end{equation} Each summand of this decomposition is a simple algebra and abelian only when $S_\lambda(V)$  is 1--dimensional.  This happens only if $n=im$ and $S_\lambda(V)=\bigwedge^m(V)^{\otimes i}$. In this case 
\begin{equation}\label{nodd}
(End_F(V)^{\otimes n})^{S_n}_{ab}=End(\bigwedge^m(V)^{\otimes i}),\ N(a)=\det(a)^i,\ a\in End_F(V).
\end{equation}
By the remarkable theory of {\em good filtrations}  or combinatorially of standard double tableaux, in positive characteristic or over the integers  the previous formula can be replaced by a canonical filtration of which Formula \eqref{swd} is the associated graded representation. From this one could deduce a general form of Formula \eqref{nodd}. 
\subsection{Determinants}
Given an algebra $R$ the composition of  a  homomorphism $j:R\to M_n(A)$ with the determinant $\det\circ j: R\stackrel{j}\longrightarrow  M_n(A)\stackrel{\det}\longrightarrow A$ is a multiplicative polynomial law, homogeneous of degree $n$.
One then obtains a natural transformation of functors:
\begin{equation}\label{nad}
 \hom_{\mathcal R_F}(R, M_n(A))\to \mathcal M_n(R,A),
\end{equation}
 and a commutative diagram.  \begin{equation}\label{unpp10}
\xymatrix{  R\ar@{->}[r]^{\mathtt j_R\qquad}\ar@{->}[rd]_f&M_n(T_n(R))\ar@{->}[d]^{M_n(\bar f)}\ar@{->}[r]^{\det}&T_n(R)\ar@{->}[d]^{ \bar f }\\
&M_n(A)\ar@{->}[r]^{\det}&A . }  
\end{equation}
 
    Since $T_n(R)$ is commutative, $\det\circ j_R$  factors  as 
$$ R\stackrel{i_R}\longrightarrow \Gamma_n(R)\stackrel{\pi}\longrightarrow \Gamma_n(R)_{ab} \stackrel{D}\longrightarrow T_n(R), \  D\circ\pi\circ i_R=\det\circ j_R.$$
The homomorphism $D:\Gamma_n(R)_{ab}  \longrightarrow T_n(R)$  is clearly invariant under the group of automorphisms $\mathfrak G_n$ so  it factors through $T_n(R)^{\mathfrak G_n}$.

\begin{remark}\label{pro}[Problem]
One of the main  problems of the theory is to understand when is that $D:\Gamma_n(R)_{ab} \to T_n(R)^{\mathfrak G_n}$  is an isomorphism.
\end{remark}  

This happens in several cases. In particular we have a fundamental result, Theorem 20.24 of \cite{depr0}.
\begin{theorem}\label{simm}
Consider the   free algebra $ A\langle X\rangle$  in some set of variables $X=\{x_i\}_{i\in I}$  with $A$  a field  or   the integers.

  Then   $D:\Gamma_n(A\langle X\rangle)_{ab} \to T_n(A\langle X\rangle)^{\mathfrak G_n}$  is an isomorphism.
\end{theorem}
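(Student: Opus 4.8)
The plan is to prove that the ring homomorphism $D$ is both surjective and injective by working throughout with the universal norm. Applying \eqref{mulrea} with $A=\Gamma_n(R)_{ab}$ to the identity map, the abelianized Schur algebra carries a universal degree-$n$ multiplicative polynomial law $\nu:=\pi\circ i_R\colon R\to\Gamma_n(R)_{ab}$; its characteristic polynomial $\chi^{\nu}_a(t):=\nu(t-a)\in\Gamma_n(R)_{ab}[t]$ (obvious base change to $Z[t]$) has coefficients $\sigma_i^{\nu}(a)\in\Gamma_n(R)_{ab}$. By the factorization $D\circ\pi\circ i_R=\det\circ\,\mathtt j_R$ recorded just before Remark \ref{pro}, we have $D\circ\nu=\det\circ\,\mathtt j_R$, so $D$ sends $\sigma_i^{\nu}(a)$ to the $i$-th characteristic coefficient $\sigma_i(\mathtt j_R(a))\in T_n(R)^{\mathfrak G_n}$. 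A structural input used repeatedly is that both $\Gamma_n(-)$ and $T_n(-)$ commute with base change (see \eqref{simten} and the construction of $T_n$), which lets me reduce statements to a prime field when convenient.

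For surjectivity I would invoke the first fundamental theorem of matrix invariants in its characteristic-free form: Theorem \ref{FFT} over an infinite field, and its modular and integral versions due to Donkin and Zubkov (see \cite{depr0}). This gives that $T_n(R)^{\mathfrak G_n}$ is generated by the characteristic coefficients $\sigma_i(\mathtt j_R(a))$, $a\in R$. Since each of these equals $D(\sigma_i^{\nu}(a))$ and the $\sigma_i^{\nu}(a)$ are honest elements of $\Gamma_n(R)_{ab}$ (the expansion of $\nu(t-a)$ in powers of $t$ involves no division), every generator lies in the image of $D$, so $D$ is surjective in every characteristic.

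The substance of the theorem is injectivity, which I would obtain by matching presentations. One first needs the analogue of the first fundamental theorem for the universal norm, namely that $\Gamma_n(R)_{ab}$ is itself generated by the coefficients $\sigma_i^{\nu}(a)$ (the Roby--Ziplies description of the divided power algebra of a free algebra and its abelianization, \cite{Roby1}, \cite{Ziplies1}, \cite{depr0}); granting this, both $\Gamma_n(R)_{ab}$ and $T_n(R)^{\mathfrak G_n}$ become quotients of one and the same polynomial ring $P$ on symbols $\sigma_i(\cdot)$, compatibly with $D$. Writing $I_\Gamma=\ker(P\to\Gamma_n(R)_{ab})$ and $I_T=\ker(P\to T_n(R)^{\mathfrak G_n})$, surjectivity of $D$ forces $I_\Gamma\subseteq I_T$, and $D$ is injective exactly when $I_\Gamma=I_T$. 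The reverse inclusion is precisely the characteristic-free second fundamental theorem of Zubkov \cite{Zub}, which appears here as Theorem \ref{DoZu}: it identifies $I_T$ with the ideal generated by Zubkov's relations. These relations encode only the multiplicativity and functoriality of the norm, which $\nu$ satisfies by its very construction, so each of them already maps to zero in $\Gamma_n(R)_{ab}$, i.e.\ lies in $I_\Gamma$. Hence $I_\Gamma=I_T$ and $D$ is an isomorphism.

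The hard part is entirely this injectivity in positive characteristic and over $\mathbb Z$. In characteristic $0$ one linearizes the norm into the trace and exploits the linear reductivity of $GL(n)$ (cf.\ Remark \ref{vl} and \cite{ppr}), so that $D_{\mathbb Q}$ is an isomorphism by the classical fundamental theorems of \cite{P3} and \cite{Raz3}; but neither polarization nor reductivity survives modulo a prime, since one cannot divide by $n!$. The honest characteristic-free substitute is Zubkov's theorem together with the good-filtration and standard double tableaux technology highlighted in \S\ref{scha}, through which both $\Gamma_n(R)_{ab}$ and $T_n(R)^{\mathfrak G_n}$ are exhibited as free $A$-modules on matching bases, so that $\operatorname{gr}D$, and hence $D$, is bijective. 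Verifying that no relation is lost and none is created under the passage to the abelianization — equivalently, that Zubkov's relations hold in $\Gamma_n(R)_{ab}$ and exhaust the kernel — is the step carrying the full weight of the characteristic-free invariant theory.
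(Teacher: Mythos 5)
Your outline matches what the paper actually does with this statement: Theorem \ref{simm} is not proved here but quoted as Theorem 20.24 of \cite{depr0}, and the paper's own gloss on it --- Donkin's theorem supplying the generators $\sigma_i(M)$ of $T_n(A\langle X\rangle)^{\mathfrak G_n}$, Zubkov's theorem matching the relations with those of $\mathbb S_{n,A}=\Gamma_n(A\langle X\rangle)_{ab}$, and the Roby--Ziplies analysis supplying the generators of the latter --- is exactly the generators-and-relations comparison you describe. Two cautions on how you source the relations step. First, do not cite Theorem \ref{DoZu} for it: part (1) of that theorem is the same statement as Theorem \ref{simm}, so the reference is circular inside this paper; the non-circular input is Zubkov's theorem itself, in the form that the kernel of $\mathbb S_A\to T_n(A\langle X\rangle)^{\mathfrak G_n}$ is the $T$--ideal generated by the elements $\sigma_i(f)$, $i>n$, which by Theorem \ref{quo} is precisely the kernel of $\mathbb S_A\to\mathbb S_{n,A}$. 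Second, Zubkov's generating relations are these vanishing conditions $\sigma_i(f)=0$, $i>n$, not the multiplicativity relations $\phi_{\underline h,\underline k}$ of \eqref{sec}; that alternative presentation is Theorem \ref{nZ}, whose proof in the paper is deduced \emph{from} Theorem \ref{simm} and so cannot be used here. With the relations taken in the correct form, your verification that they already hold in $\Gamma_n(A\langle X\rangle)_{ab}$ is immediate (there simply is no $\tau_i$ with $i>n$ on $\Gamma_n$), and the argument closes as you say.
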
 Notice that $T_n(A\langle X\rangle)^{\mathfrak G_n}$ is {\em the ring of invariants of $X$-tuples of $n\times n$ matrices}.

Theorem \ref{simm} is based on a Theorem of Procesi \cite{P3} and Razmyslov \cite{R} characterizing trace identities of matrices (cf. Theorem \ref{SFT}), and proved by Zieplies \cite{Ziplies} and Vaccarino \cite{Vacc},   when  $A=\mathbb Q$. 

The general case  is fully treated in \cite{depr0}. It is based on the characteristic free results of Donkin \cite{Don} and Zubkov \cite{Zub} on the invariants of matrices and a careful combinatorial study of  $\Gamma_n(A\langle X\rangle)$  inspired by the work of Zieplies.  

In fact since $A\langle X\rangle$ is a free $A$  module, its divided power is more conveniently described as the symmetric tensors:
$$\Gamma_n(A\langle X\rangle)\simeq (A\langle X\rangle^{\otimes n}) ^{S_n}.$$ Since, using the basis of monomials, the space $A\langle X\rangle^{\otimes n}$ is a permutation representation of $S_n$, one has a combinatorial description of  $(A\langle X\rangle^{\otimes n}) ^{S_n}.$
The abelian quotient, isomorphic to the ring of invariants of matrices, does not have a combinatorial description and it is a rather hard object to study.\smallskip

\begin{example}\label{ssf} If $X=\{x\}$ is a single variable we have that $A\langle X\rangle=A[x]$  is the commutative ring of polynomials, $A[x]^{\otimes n}=A[x_1,\ldots,x_n]$ so   $\Gamma_n(A[x])$  is the algebra of symmetric polynomials in $n$--variables, commutative. 

Consider the   elementary symmetric function $e_i$ defined by: \begin{equation}\label{esf}
(t-x)^{\otimes n}=\prod_{i=1}^n(t-x_i)=t^n-e_1t^{n-1}+e_2t^{n-2}-\ldots+(-1)^ne_n.
\end{equation}The map $D$  maps $(t-x)^{\otimes n}$ to $\det(t-\xi)= t^n+\sum_{i=1}^n(-1)^i\sigma_i(\xi)t^{n-i}$,  the  characteristic polynomial   of a generic matrix $\xi=(\xi_{i,j})$.

  So the elementary symmetric function $e_i$ maps  to $\sigma_i(\xi)$, the generators of invariants   of a single matrix.
  
  This is a very special case of Theorem \ref{simm}.
\end{example}

Although usually one deduces the functions $\sigma_i(x)$ for an $n\times n$ matrix $x$ from the determinant of $t-x$  one sees immediately that  embedding the $n\times n$ matrices into $n+1\times n+1$  matrices as upper left corner 
$$i_n(x):=\begin{vmatrix}
x&0\\0&0
\end{vmatrix},\ \det(t-i_n(x))=\det\begin{vmatrix}
t-x&0\\0&t\end{vmatrix}=\det(t-x)t $$ implies  $\sigma_i(x)=\sigma_i(i_n(x))$ so the functions $\sigma_i(x)$ are in fact also defined for infinite matrices with only finitely many non 0 entries $\bigcup_nM_n(F)$. It is thus important to understand the symbolic calculus on these functions and this is the theme of the next section.
 
\subsection{Symbolic approach}The construction   $R\mapsto \Gamma_n(R)$ is functorial in $R$ so given $r\in R$  the map $A[x]\to R,\ x\to    r$ induces a map $\Gamma_n(A[x])\to \Gamma_n(R)$ under which $$e_i\to \tau_i(r), \ (1+r)^{\otimes n}= 1+\tau_1(r)+\tau_2(r)+\ldots+\tau_n(r).$$ 

\begin{proposition}\label{labase}
It is proved,   in \cite{depr0} Lemma 20.12,   that, given a basis $a_i$ of $R$,  the elements $\tau_i(a_j)$  generate $(R^{\otimes n})^{S_n}$. Once we pass to the abelianization and set $\sigma_i(a)$ to be the class of $\tau_i(a )$  we have $\sigma_i(a b)= \sigma_i(ba ),\ \forall  a,b  $, see   \cite{depr0} Proposition  20.20.

\end{proposition}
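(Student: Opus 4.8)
The plan is to treat the two assertions separately: the first is a generation statement for the Schur algebra $\Gamma_n(R)=(R^{\otimes n})^{S_n}$, the second a trace-like identity that survives abelianization. For the first, I would work with the explicit monomial model. Since $R$ is $F$-free with basis $\{a_j\}$, the tensor power $R^{\otimes n}$ has the basis of monomials $a_{j_1}\otimes\cdots\otimes a_{j_n}$, on which $S_n$ acts by permuting the factors, so $(R^{\otimes n})^{S_n}$ has as a basis the orbit sums $m_\mu$ indexed by multisets $\mu$ of basis vectors of total size at most $n$ (the unused slots being occupied by $1$). The point of departure is that the proposed generators are precisely the pure orbit sums: unwinding $(1+sa_j)^{\otimes n}=\sum_i s^i\tau_i(a_j)$ shows $\tau_i(a_j)=m_{\{a_j,\dots,a_j\}}$, the orbit sum in which $a_j$ occurs $i$ times.

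First I would establish the key expansion. For a multiset $\mu$ with distinct entries $a_j$ of multiplicities $\mu_j$, I form the product $\prod_j\tau_{\mu_j}(a_j)$ in $\Gamma_n(R)$, where multiplication is factorwise in $R^{\otimes n}$. Expanding, the terms in which all the chosen slots are distinct reassemble exactly into $m_\mu$ with coefficient $1$, while the terms in which two slots collide produce a product of basis vectors inside a single tensor slot; re-expanding that product in the basis $\{a_j\}$ yields orbit sums supported on strictly fewer occupied slots. Thus $\prod_j\tau_{\mu_j}(a_j)=m_\mu+(\text{orbit sums on fewer slots})$, and a downward induction on the number of occupied slots places every $m_\mu$ in the subalgebra generated by the $\tau_i(a_j)$. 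The hard part here is purely bookkeeping but genuinely important: one must check that the leading coefficient is $1$ and not a multiplicity factorial, so that the argument is valid in every characteristic. This is exactly why one uses the whole family $\tau_i$ (which already absorbs the repetitions) rather than powers of the linear map $\tau_1$, whose products would introduce the forbidden factorials.

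For the second assertion the conceptual core is conjugation invariance. Because $i_R$ is a multiplicative law, for an invertible $g$ one has $(1+s\,grg^{-1})^{\otimes n}=i_R(g)\,(1+sr)^{\otimes n}\,i_R(g)^{-1}$; comparing coefficients of $s^i$ gives the exact identity $\tau_i(grg^{-1})=i_R(g)\,\tau_i(r)\,i_R(g)^{-1}$ in $\Gamma_n(R)$. Since $uvu^{-1}-v=[u,vu^{-1}]$ is a commutator, passing to $\Gamma_n(R)_{ab}$ yields $\sigma_i(grg^{-1})=\sigma_i(r)$. When $a$ happens to be invertible this already finishes the job, for then $ba=a^{-1}(ab)a$ forces $\sigma_i(ab)=\sigma_i(ba)$.

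The main obstacle is that a general $a$ is not invertible, and there is no reason for invertible elements to be dense in an arbitrary $R$. I would remove this hypothesis by passing to the universal model: by naturality of $\tau_i$ and functoriality of abelianization it suffices to prove $\sigma_i(xy)=\sigma_i(yx)$ in $\Gamma_n(\ZZ\langle x,y\rangle)_{ab}$, the case of a general $R$ and base ring following by the substitution $x\mapsto a,\ y\mapsto b$ together with base change along $\ZZ\to Z$ (both $\Gamma_n$ and abelianization commute with base change). For the universal case I invoke Theorem \ref{simm} with $A=\ZZ$: the isomorphism $D\colon\Gamma_n(\ZZ\langle x,y\rangle)_{ab}\xrightarrow{\sim}T_n(\ZZ\langle x,y\rangle)^{\mathfrak G_n}$ carries $\sigma_i(xy)$ to the coefficient $\sigma_i(\xi_x\xi_y)$ of the characteristic polynomial of the product of generic matrices. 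Now the classical matrix identity $\det(t-\xi_x\xi_y)=\det(t-\xi_y\xi_x)$ --- obtained by computing the determinant of $\left(\begin{smallmatrix} t\,\mathrm{Id}_n & \xi_x\\ \xi_y & \mathrm{Id}_n\end{smallmatrix}\right)$ by the two Schur complements, an identity of polynomials in $t$ verified for generic $t$ hence everywhere --- gives $\sigma_i(\xi_x\xi_y)=\sigma_i(\xi_y\xi_x)$, and injectivity of $D$ returns $\sigma_i(xy)=\sigma_i(yx)$, as required.
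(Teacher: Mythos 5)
The paper offers no proof here: the proposition is a pair of citations to \cite{depr0} (Lemma 20.12 and Proposition 20.20), so any argument you give is necessarily a different route. Your proof of the generation statement is correct and is essentially the standard one for divided powers of a free module: the orbit sums form a basis of $(R^{\otimes n})^{S_n}$, the product $\prod_j\tau_{\mu_j}(a_j)$ is slot-wise, the disjoint configurations reassemble into $m_\mu$ with coefficient exactly $1$ (the crucial characteristic-free point, which you rightly isolate), and the collision terms are $S_n$-invariant of strictly smaller support. One small slip: the induction on the number of occupied slots runs \emph{upward} (the error terms have fewer occupied slots, so they are covered by the inductive hypothesis), not downward; this does not affect the argument.

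For the identity $\sigma_i(ab)=\sigma_i(ba)$ there is a genuine logical problem. Your conjugation argument $\tau_i(grg^{-1})=i_R(g)\tau_i(r)i_R(g)^{-1}$ and the reduction to $\ZZ\langle x,y\rangle$ are both fine, but the final step invokes Theorem \ref{simm}, i.e.\ Theorem 20.24 of \cite{depr0}. In the source, the cyclic invariance (Proposition 20.20) is an \emph{input} to the combinatorial analysis of $\Gamma_n(A\langle X\rangle)_{ab}$ --- one needs it already to reduce the generators $\sigma_i(M)$ to Lyndon words before matching them with Zubkov's presentation of the matrix invariants --- so deriving 20.20 from 20.24 is circular once the references are unwound. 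Within the present paper, where both results are imported as black boxes, your derivation is a consistency check rather than a proof. The fix is elementary and stays entirely inside your own framework: from $b(1+sab)=(1+sba)b$ and multiplicativity of $r\mapsto r^{\otimes n}$ you get $\sigma_i(ab)=\sigma_i(ba)$ whenever $b$ is invertible; for general $b$ replace $b$ by $t+b$ over the Laurent ring $\ZZ((t^{-1}))$ (where $t+b$ is invertible after completion), note that $\sigma_i(a(t+b))-\sigma_i((t+b)a)$ lies in $\Gamma_n(R)_{ab}[t]$ by base-change compatibility of $\Gamma_n$, conclude that this polynomial vanishes identically, and set $t=0$. This avoids Donkin--Zubkov altogether and is presumably close to what Proposition 20.20 actually does.
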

We apply this construction to the free algebra. The grading of the algebra $A\langle X\rangle $  induces a grading of  $\Gamma_n(A\langle X\rangle)$.  
Recall that a monomial $M$ of positive length, is called {\em primitive} if it is not a power $N^k,\ k>1$. 
\smallskip

 From Proposition \ref{labase} and  Formula \eqref{fes} one sees that the elements $\tau_i(M),\ i=1,\ldots, n$   as    $M$  runs over the primitive monomials  generate  $\Gamma_n(A\langle X\rangle)$. The elements $\tau_i(M)$  satisfy complicated relations which are not fully understood. 
\begin{definition}\label{sna}
Denote by $\mathbb S_{n,A}$ the abelian quotient of $\Gamma_{n}(A\langle X\rangle)$ and by $\sigma_i(M)$  the class of $\tau_i(M)$ in $\mathbb S_{n,A}$.
\end{definition}  One can prove, Proposition 20.20 of \cite{depr0}, that if $M=AB$ one has $\sigma_i(AB)= \sigma_i(BA)$, one says that $AB$ and $BA$  are     {\em cyclically equivalent.} 

A {\em Lyndon word},    is a primitive monomial    minimal, in the lexicographic order,  in its class of cyclic equivalence.

As for the theory of symmetric functions one can pass to the limit, as $n\to\infty$, of the algebras $\Gamma_n(A\langle X\rangle)$ and their abelian quotients.

 If $\epsilon :      A\langle X\rangle\to A$ is the evaluation of  $X$ in 0, we have the map  
$$\pi_n:A\langle X\rangle^{\otimes n+1}\to A\langle X\rangle^{\otimes n},\ \pi_n(a_1 \otimes \ldots\otimes a_n\otimes a_{n+1})=a_1 \otimes \ldots\otimes a_n\otimes \epsilon (a_{n+1}). $$
This induces a map, still called $\pi_n: \Gamma_{n+1}(A\langle X\rangle)\to \Gamma_n(A\langle X\rangle) $. We have  $$\pi_n(\tau_i(M))=\begin{cases}
 \tau_i(M) \ \text{if}\ i\leq n\\
0\ \text{if}\ i= n+1
\end{cases}.$$ 
\begin{definition}\label{definf}
One can then define a limit algebra $ \Gamma_{\infty}(A\langle X\rangle)$ generated by the elements $\tau_i(M),\ i=1,\ldots, \infty$   as    $M$  runs over the primitive monomials and its abelian quotient $\mathbb S_{ A}\langle X\rangle$ denoted often just $\mathbb S_{ A}$, generated by the classes  $\sigma_i(M)$ of $\tau_i(M)$.   

The maps $\pi_n$ give rise to limit maps:
\begin{equation}\label{lima}
\tau_n: \Gamma_{\infty}(A\langle X\rangle)\to \Gamma_n(A\langle X\rangle),\  \tau_n: \mathbb S_{ A}\langle X\rangle\to \mathbb S_{n, A}\langle X\rangle.  \end{equation}

\end{definition}
In \cite{depr0} we have proved:

 {\bf Corollary 20.15} 
The algebra $\Gamma_{\infty}(\mathbb Q\langle X\rangle)$ is the universal enveloping algebra of 
$\mathbb Q_+\langle X\rangle$ considered as a Lie algebra.\smallskip

As for the structure of $\mathbb S_{n,A}$,
   Theorem 20.22 of \cite{depr0} (due to Zieplies) states that, in the commutative algebra  $\mathbb S_A$ one has $\sigma_i(AB)=\sigma_i(BA)$ for all monomials $A,B$  and finally  that  $\mathbb S_{ A}=A[\sigma_i(M)],\ M$ a Lyndon word, is the free polynomial ring in the variables $\sigma_i(M)$ as $M$ varies among the Lyndon words. 

Let $T_A(X)$  denote the monoid of endomorphisms  of $A\langle X\rangle$ given by mapping each variable $x_i\in X$ to some element $f_i\in A\langle X\rangle_+$, the ideal kernel of $\epsilon$ of elements with no constant term, this condition means that these endomorphisms commute with the map $\epsilon$. Each such endomorphism induces an endomorphism of each $\Gamma_n(A\langle X\rangle) $  compatible with the maps $\pi_n$ and hence an endomorphism on $\mathbb S_{n,A}:=\Gamma_n(A\langle X\rangle) _{ab}$ and on $\Gamma_{\infty}(A\langle X\rangle)$ and $\mathbb S_A.$ 
\begin{definition}\label{Tid}
A $T$--ideal of  $A\langle X\rangle$ or of $\Gamma_n(A\langle X\rangle),$ or $\mathbb S_{n,A}, \ n=1,\ldots,\infty $ is a multigraded ideal  $I$ closed under all endomorphisms induced by $T_A(X)$.
\end{definition}
\begin{remark}\label{ple} The condition of $I$  to be multigraded  can be replaced by the condition that, for every commutative $A$ algebra $B$, the ideal $_BI:=B\otimes_AI$  is  closed under all endomorphisms induced by $T_B(X)$.
This is in the spirit of polynomial laws.
\end{remark}

 For each $i=1,2,\ldots$   we have the maps  \begin{equation}\label{silaw}
f\mapsto \tau_i(f),\ A\langle X\rangle_+\to\Gamma_{\infty}(A\langle X\rangle);\quad  f\mapsto \sigma_i(f),\ A\langle X\rangle_+\to \mathbb S_A.
\end{equation}   They  are both  polynomial laws homogeneous of degree $i$ which commute with the action of the endomorphisms $T(X)$.  
 
 There is an explicit Formula, see \cite{depr0} Theorem 4.15 p. 37, which allows us to compute these laws for $\mathbb S_A$. It is due to Amitsur, \cite{Am},  (who stated it for matrix invariants), and later independently by Reutenauer and Sch\"utzenberger \cite{Reut1}. 
 \begin{theorem}\label{AmF}
Given an  $n\in\mathbb N$, non commutative variables $x_i$ and commutative parameters $t_i$:
\begin{equation}\label{comAm}
\sigma_n(\sum_it_ix_i)=\!\!\!\!\!\!\!\!\!\!\!\sum_{\begin{matrix}
(p_1<\ldots <p_k)\subset  W_0,\\  j_1,\ldots,j_k\in \mathbb N,\ \sum j_i\ell(p_i)=n  \end{matrix}}\!\!\!\!\!\!\!\!\!\!\!\!\!\!\!\!\!\!\!(-1)^{n-  \sum j_i}
t^{\sum_{i=1}^k j_i\nu(p_i)}\sigma_{j_1}(p_1)\ldots \sigma_{j_k}(p_k)
\end{equation} \end{theorem}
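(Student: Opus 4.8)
The plan is to reduce the whole statement to a single generating-function identity and then extract the coefficient of a formal variable. Throughout I work in the limit algebra $\mathbb S_A$. By the freeness theorem (Theorem 20.22 of \cite{depr0}, due to Zieplies) the ring $\mathbb S_{\ZZ}$ is the polynomial ring on the $\sigma_i(M)$ with $M$ Lyndon, hence embeds into $\mathbb S_{\QQ}$; since the asserted identity \eqref{comAm} has coefficients in $\ZZ$ and the laws $\sigma_i$ are compatible with base change, it suffices to prove it after tensoring with $\QQ$. This is the decisive simplification, because over $\QQ$ I may freely use logarithms and the classical Newton identities.

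Set $f=\sum_i t_ix_i$ and introduce the generating series $P(g;s):=\sum_{j\ge 0}(-1)^j\sigma_j(g)\,s^j$, the analogue of $\det(1-sg)$. Using Example \ref{ssf} and the identification of $\mathbb S_A$ with matrix invariants (Theorem \ref{simm}), I may regard the $\sigma_j$ as coefficients of a characteristic polynomial, so that over $\QQ$ one has the Newton relation $-s\,\frac{d}{ds}\log P(g;s)=\sum_{k\ge 1}\psi_k(g)\,s^k$, where $\psi_k(g):=\sigma_1(g^k)$ is the $k$-th power sum. First I would expand these power sums: since $\sigma_1$ is linear (a degree-one law over $\QQ$) and cyclically invariant (Proposition \ref{labase}), one has $\psi_k(f)=\sum_{|w|=k}t^{\nu(w)}\sigma_1(x_w)$, and $\sigma_1(x_w)$ depends only on the cyclic class of the word $w$.

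The combinatorial core is the necklace count. Every word $w$ of length $k$ is uniquely a power $p^d$ of a primitive word, whose cyclic class contains a unique Lyndon representative of length $\ell(p)$ with $d\,\ell(p)=k$, and that class has exactly $\ell(p)$ elements. Grouping the word sum accordingly turns $\psi_k(f)$ into $\sum_{d\,\ell(p)=k}\ell(p)\,(t^{\nu(p)})^{d}\psi_d(x_p)$, the sum ranging over Lyndon words $p$. Substituting into the Newton relation and interchanging the summations over $k$ and over $(p,d)$, the factor $\ell(p)$ cancels the denominator produced by $\log$, and the series resums as the logarithm of a product, giving
\begin{equation*}
P(f;s)=\prod_{p\text{ Lyndon}}P\!\left(x_p;\,s^{\ell(p)}\,t^{\nu(p)}\right).
\end{equation*}
This product is well defined in the $s$-adic topology because only finitely many Lyndon words satisfy $\ell(p)\le n$.

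It then remains to expand each factor $P(x_p;\,s^{\ell(p)}t^{\nu(p)})=\sum_{j\ge 0}(-1)^j s^{j\ell(p)}t^{j\nu(p)}\sigma_j(x_p)$, multiply, and read off the coefficient of $s^n$: choosing distinct Lyndon words $p_1<\dots<p_k$ with exponents $j_1,\dots,j_k\ge 1$ subject to $\sum_i j_i\ell(p_i)=n$ reproduces exactly the right-hand side of \eqref{comAm}, once one uses $[s^n]P(f;s)=(-1)^n\sigma_n(f)$ and collects the sign $\prod_i(-1)^{j_i}=(-1)^{\sum j_i}$. The main obstacle I anticipate is not this bookkeeping but the passage between characteristics: the logarithmic resummation genuinely needs $\QQ$, so the argument stands or falls on the integrality/base-change descent flagged at the start, namely that $\mathbb S_{\ZZ}\hookrightarrow\mathbb S_{\QQ}$ together with compatibility of the laws $\sigma_i$ with $\ZZ\to A$. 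A secondary point to handle carefully is the justification of the Newton identity directly inside $\mathbb S_{\QQ}$, which I would deduce from the matrix realization of Theorem \ref{simm} by taking $n$ large enough that $\sigma_1,\dots,\sigma_n$ are algebraically independent.
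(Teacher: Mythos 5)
The paper does not actually prove Theorem \ref{AmF}: it quotes it from \cite{depr0}, Theorem 4.15, attributing it to Amitsur and to Reutenauer--Sch\"utzenberger, so your argument can only be measured against the cited literature. What you propose is the classical generating-function proof: the product formula $P(f;s)=\prod_{p}P(x_p;s^{\ell(p)}t^{\nu(p)})$ over Lyndon words, obtained by comparing logarithmic derivatives through the power sums $\sigma_1(f^k)$ and the decomposition of words into powers of primitive words. That computation is correct: the linearity and cyclic invariance of $\sigma_1$, the count of $\ell(p)$ rotations per primitive class cancelling the $1/d$ produced by the logarithm, the transfer of the one-variable Newton identity into $\mathbb S_{\mathbb Q}$ via the substitution $x\mapsto f$ and Formula \eqref{fes1}, and the final sign bookkeeping $[s^n]P(f;s)=(-1)^n\sigma_n(f)$, $\prod_i(-1)^{j_i}=(-1)^{\sum j_i}$ all check out.

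The genuine weak point is the very first step, the descent from $\mathbb Q$ back to $\mathbb Z$. You justify the injection $\mathbb S_{\mathbb Z}\hookrightarrow\mathbb S_{\mathbb Q}$ by Ziplies' theorem (Theorem 20.22 of \cite{depr0}), i.e.\ by the statement that $\mathbb S_{\mathbb Z}$ is a polynomial ring on the $\sigma_i(M)$, $M$ Lyndon. But that theorem --- at least its generation half, as the paper itself makes explicit in the paragraphs following Theorem \ref{AmF} --- is established \emph{using} Amitsur's formula: Formula \eqref{comAm} together with \eqref{fes} is precisely the mechanism by which one shows that the $\sigma_i(M)$ with $M$ Lyndon generate $\mathbb S_A$. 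So your reduction is circular relative to the development you are citing. What you actually need is only that the degree-$n$ homogeneous component of $\mathbb S_{\mathbb Z}$ has no $\mathbb Z$-torsion, and this is not elementary: $\Gamma_n(\mathbb Z\langle X\rangle)$ is $\mathbb Z$-free (it has a basis of orbit sums of monomial tensors), but the quotient by the commutator ideal could a priori acquire torsion, and ruling that out is part of the hard content of the Ziplies--Zubkov circle of results. To repair the proof you should either run the argument integrally, as Amitsur and Reutenauer--Sch\"utzenberger do --- expanding $\det(1-sf)$ directly with no logarithms --- or reduce via Theorems \ref{quo} and \ref{simm} to the ring of invariants of $m\times m$ matrices for some $m\geq n$, where $\mathbb Z$-freeness is available from Donkin's good-filtration results; in either case you are importing input at least as substantial as the step you are trying to shortcut.
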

Here $W_0$  denotes the set of Lyndon words ordered by the degree lexicographic order. For a word $p$,  $\nu(p)$ is the vector $(a_1,\ldots,a_n)$  with $a_i$ counting how many times the variable $x_i$ appears in $p$. Finally $t^{(a_1,\ldots,a_n)} :=\prod_{i=1}^nt_i^{a_i}.$\medskip

In particular one can collect, in Formula  \eqref{comAm}  the terms of the same degree in the variables $x_i$ and have an explicit expression of the {\em polarized} forms of $\sigma_n(x)$:
\begin{equation}\label{comAm1}
\sigma_n(\sum_it_ix_i)= \sum_{ (a_1,\ldots,a_n)\mid \sum_ia_i=n}\prod_{i=1}^nt_i^{a_i}
 \sigma_{n;a_1,\ldots,a_n}(x_1,\ldots,x_n).
\end{equation}

 Substituting for a variable $x$ a linear combination   $\sum_jt_jM_j$  of monomials and applying Formula \eqref{comAm} to $\sigma_n(\sum_jt_jM_j)$ one obtains an element of $\mathbb S_A$ provided one has a further law. In fact a primitive word computed in monomials need no more be primitive  so we also need   the expression of  the elements $\sigma_i(x^j)$  in terms of the $\sigma_k(x), k\leq i\cdot j.$ These Formulas  arise from Example \ref{ssf}  as  the (stable) universal polynomial formulas in the algebra of symmetric functions expressing  \begin{equation}\label{fes}
e_i(x_1^j,\ldots, x_n^j)=P_{i,j}(e_1,\ldots,e_{i\cdot j})\end{equation}  in terms of the elementary symmetric functions  $e_k(x_1,\ldots, x_n )$  for $n>i\cdot j$.
 
 One has
 \begin{theorem}\label{quo}
The Kernels of the maps $\Gamma_{\infty}(A\langle X\rangle)\to \Gamma_{n}(A\langle X\rangle)$, respectively $ \mathbb S_A \to \mathbb S_{n,A} $ are the $T$--ideals generated by all the elements $\tau_i(f),\ i>n$, respectively  the $T$--ideal  generated by all the elements $\sigma_i(f),\ i>n,\ f\in \ A\langle X\rangle $. 
\end{theorem}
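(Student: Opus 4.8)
The plan is to prove each of the two kernel identifications by the usual two inclusions, organizing the harder inclusion around a universal-property argument; the non-abelian and abelian statements run in parallel, and in fact the abelian case can be obtained from the non-abelian one by passing to abelianizations (the commutator ideal of $\Gamma_\infty(A\langle X\rangle)$ surjects onto that of $\Gamma_n(A\langle X\rangle)$, and $\sigma_i(f)$ is by definition the image of $\tau_i(f)$, so the image of the $T$-ideal generated by the $\tau_i(f)$ is the $T$-ideal generated by the $\sigma_i(f)$). Write $p_n\colon\Gamma_\infty(A\langle X\rangle)\to\Gamma_n(A\langle X\rangle)$ for the projection of \eqref{lima} (there denoted $\tau_n$), put $K_n=\ker p_n$, and let $J_n$ be the $T$-ideal generated by the $\tau_i(f)$ with $i>n$, $f\in A\langle X\rangle_+$.

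First the easy inclusion $J_n\subseteq K_n$. The map $p_n$ is a limit of the $T(X)$-equivariant homomorphisms $\pi_m$, hence is itself a $T(X)$-equivariant algebra homomorphism, so $K_n$ is a $T$-ideal (Definition \ref{Tid}). Composing the polynomial law $f\mapsto\tau_i(f)$ of \eqref{silaw} with $p_n$ gives the corresponding law into $\Gamma_n(A\langle X\rangle)$; since the universal degree-$n$ divided power has no component in degrees $>n$, this law vanishes identically for $i>n$, i.e. $p_n(\tau_i(f))=0$. Thus the generators of $J_n$ lie in the $T$-ideal $K_n$, and $J_n\subseteq K_n$.

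For the reverse inclusion it suffices to show that the induced surjection $\bar p_n\colon\Gamma_\infty(A\langle X\rangle)/J_n\twoheadrightarrow\Gamma_n(A\langle X\rangle)$ is an isomorphism, and I would produce an inverse from the universal property \eqref{mulre} of $\Gamma_n$. The assignment $f\mapsto[\tau_n(f)]$ is a homogeneous polynomial law of degree $n$ from $A\langle X\rangle$ to $\Gamma_\infty(A\langle X\rangle)/J_n$, and the content of the claim is exactly that it is \emph{multiplicative} modulo $J_n$. Granting this, it is a degree-$n$ norm, hence factors as $\psi\circ i_{A\langle X\rangle}$ for a unique homomorphism $\psi\colon\Gamma_n(A\langle X\rangle)\to\Gamma_\infty(A\langle X\rangle)/J_n$. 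Then $\bar p_n\circ\psi=\mathrm{id}$ follows from the uniqueness clause of the universal property, while $\psi\circ\bar p_n=\mathrm{id}$ is checked on the algebra generators $[\tau_i(M)]$, $i\le n$, by comparing the two degree-$n$ laws on $t+M$: in the quotient $f\mapsto[\tau_n(f)]$ is a genuine norm, so the unit behaves correctly and $[\tau_n(t+M)]=\sum_{k=0}^n t^{\,n-k}[\tau_k(M)]$ matches the expansion of the universal law in $\Gamma_n$, forcing $\psi(\tau_k(M))=[\tau_k(M)]$. Hence $\bar p_n$ is an isomorphism and $K_n=J_n$.

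The hard part is the multiplicativity of $\tau_n$ modulo $J_n$, i.e. that $\tau_n(rs)-\tau_n(r)\tau_n(s)\in J_n$ for all $r,s$; by the reduction above this single family of congruences is in fact \emph{equivalent} to the whole theorem, so there is no shortcut hiding here. Equivalently, using Theorem \ref{simm} to identify $\mathbb{S}_{n,A}=\Gamma_n(A\langle X\rangle)_{ab}$ with the ring of invariants of tuples of $n\times n$ matrices (Theorem \ref{FFT}), the abelian statement asserts that every relation among the matrix invariants $\sigma_i(M)$ is a $T$-ideal consequence of the vanishings $\sigma_i(f)=0$, $i>n$. This is precisely the second fundamental theorem for matrix invariants (Procesi--Razmyslov in characteristic $0$, Donkin \cite{Don} and Zubkov \cite{Zub} in general), which reappears as Theorem \ref{DoZu}; I would invoke it and then verify concretely that the defect terms are captured in $J_n$ by polarizing the identities $\tau_{n+1}(f)=0$ via Amitsur's Formula \eqref{comAm} (the simplest instance being $\sigma_1(xy)-\sigma_1(x)\sigma_1(y)=-\sigma_{2;1,1}(x,y)$, a polarized component of $\sigma_2(x+y)\in J_1$). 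The structure theorem $\mathbb{S}_A=A[\sigma_i(M)]$ over Lyndon words (Theorem 20.22 of \cite{depr0}) together with the substitution rules \eqref{fes} supplies the combinatorial bookkeeping that turns this into an explicit reduction.
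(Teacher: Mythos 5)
The paper states Theorem \ref{quo} without proof (the substance is deferred to \cite{depr0}), so what follows is an assessment of your argument rather than a line-by-line comparison. Your architecture is sound: the easy inclusion, the construction of an inverse to $\bar p_n$ from the universal property \eqref{mulre}, and the derivation of the abelian case from the non-abelian one (right-exactness of abelianization makes $\ker(\mathbb S_A\to\mathbb S_{n,A})$ the image of $\ker(\Gamma_\infty(A\langle X\rangle)\to\Gamma_n(A\langle X\rangle))$, and the image of the $T$--ideal generated by the $\tau_i(f)$ is the $T$--ideal generated by the $\sigma_i(f)$). You also correctly isolate the crux: the congruence $\tau_n(rs)\equiv\tau_n(r)\tau_n(s)\pmod{J_n}$ in the \emph{noncommutative} quotient $\Gamma_\infty(A\langle X\rangle)/J_n$.

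The gap is in how you discharge that crux. The sentence beginning ``Equivalently, using Theorem \ref{simm}\dots'' silently replaces the noncommutative congruence by its abelian shadow, and the second fundamental theorem of Zubkov \cite{Zub} that you then invoke is a statement about the \emph{commutative} ring of matrix invariants: it controls only $\mathbb S_{n,A}=\Gamma_n(A\langle X\rangle)_{ab}$, i.e.\ only the abelianization of $\bar p_n$. A surjection of noncommutative algebras that becomes an isomorphism after abelianization need not be injective, so the SFT cannot deliver $\tau_n(rs)-\tau_n(r)\tau_n(s)\in J_n$; at best it yields the abelian half of the theorem, and it also inverts the paper's logic, in which Theorem \ref{quo} supplies the presentation of $\mathbb S_{n,A}$ that Zubkov's theorem is matched against to obtain Theorem \ref{simm}. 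The repair is precisely the computation you relegate to ``bookkeeping,'' and it needs no invariant theory: in $\Gamma_m(A\langle X\rangle)=(A\langle X\rangle^{\otimes m})^{S_m}$ write $\tau_n(x)=\sum_{|I|=n}x^{(I)}$ (the tensor with $x$ in the slots of $I$ and $1$ elsewhere); sorting the products $x^{(I)}y^{(J)}$ by $k=|I\cap J|$ gives
$$\tau_n(x)\tau_n(y)-\tau_n(xy)=\sum_{k=0}^{n-1}\tau_{2n-k;\,k,\,n-k,\,n-k}(xy,x,y),$$
and each summand is the multidegree $(n,n)$ component of $\tau_{2n-k}(xy+x+y)$ with $2n-k>n$, hence lies in the multigraded $T$--ideal $J_n$ (Definition \ref{Tid}); now substitute $x\mapsto r$, $y\mapsto s$. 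The same observation proves $K_n\subseteq J_n$ outright, bypassing the universal property: in the basis of orbit sums of multisets of monomials, $K_n$ is spanned by the multisets with more than $n$ nontrivial entries, and each of these is a multihomogeneous component of some $\tau_k(f)$ with $k>n$. Note finally that Amitsur's Formula \eqref{comAm} lives in the commutative $\mathbb S_A$ and is not the right tool for the $\tau$'s; the symmetric-tensor expansion above is.
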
 

In other words, in the case  $\mathbb S_{n,A}$,  the Kernel of $\pi_n$ is the ideal generated by all the polarized forms $ \sigma_{m;a_1,\ldots,a_n}(p_1,\ldots,p_m), m>n$  with $p_1,\ldots,p_m$  monomials.

The Theorem of Zubkov then states that the ring of invariants  of matrices has the same generators and relations as $\mathbb S_{n,A}$, when $A=\mathbb Z$ or a field, hence the isomorphism of  Theorem \ref{simm}.

The following example shows for $n=2$  an explicit deduction of the multiplicative nature of the determinant  from these relations.
\begin{align*}
& \sigma_{3;1,1,1}(a,b,ba)=\\& +\sigma_1(a) \sigma_1(b) \sigma_1(ab ) -\sigma_1(a)\sigma_1(ab^2 )-\sigma_1(b)\sigma_1(a^2b) +\sigma_1(a^2b^2)-2\sigma_2(ab)\quad\quad\\
 &\sigma_{4;2,2}(a,b)=\\&  - \sigma_1(a)\sigma_1(b) \sigma_1(ab)+  \sigma_1(a) \sigma_1(ab^2) + \sigma_1(b) \sigma_1(a ^2b) -\sigma_1(a^2b^2)\\ &+\sigma_2(ab)+\sigma_2 (a) \sigma_2(b) \\& \sigma_{3;1,1,1}(a,b,ba)+\sigma_{4;2,2}(a,b)=\sigma_2(ab)-\sigma_2(a) \sigma_2(b). 
\end{align*}
One can in fact take the basic identity for invariants of $n\times n$ matrices.
\begin{equation}\label{bbd}
\det(ab)=\det(a)\det(b)\iff \sigma_n(ab)-\sigma_n(a) \sigma_n(b)=0.
\end{equation}
Consider the  polynomial ring  $A[\sigma_i(p)],\ p\in W_0,\ i\leq n$.  Using Formula \eqref{comAm}   define, for each $f=\sum_it_iM_i\in A\langle X\rangle $, the element  $\sigma_k(f),\ k\leq n$ as follows.

If one substitutes each $x_i$ with $M_i$ in  Formula \eqref{comAm} one has a formal expression containing symbols $\sigma_i(M),\ i\leq k$  where $M$  may be an arbitrary monomial (including 1).  Then $M$ is cyclically equivalent to some power $N^j$ with $N\in W_0$ a Lyndon word.  One then considers  in the algebra of symmetric functions in exactly $n$ variables the Formula  \eqref{fes} with $e_i=0,\ \forall i>n$  i.e.  $e_i(x_1^j,\ldots, x_n^j)=P_{i,j}(e_1,\ldots,e_n,0,\ldots,0)$  in terms of the elementary symmetric functions  $e_k(x_1,\ldots, x_n ),\ k\leq n$. One then sets        \begin{equation}\label{pr}
\sigma_i(N^j)=P_{i,j}(\sigma_1(N ),\ldots,\sigma_n(N )) ,\ \sigma_i(1):=\binom n i .
\end{equation} 

Given $f=\sum_iu_iM_i,\ g=\sum_iv_iM_i\in A\langle X\rangle $ one may consider
 \begin{equation}\label{sec}
\sigma_n(fg)-\sigma_n(f) \sigma_n( g)=\sum_{\underline h,\underline k} u^{\underline h} v^{\underline k}\phi_{\underline h  ,\underline k},\  \phi_{\underline h  ,\underline k}\in A[\sigma_i(p)].
\end{equation} Evaluating  the variables $\xi_i\in X$  in the generic $n\times n$ matrices one has a homomorphism $\rho:A\langle X\rangle \to A[\xi_i]$ to the algebra of generic matrices which extends to a homomorphism of the symbolic algebra $\rho: A[\sigma_i(p)]\to A[\xi_{i,j}^k]^{PGL(n)}$  to the ring of invariants of matrices. By the Theorem of Donkin this is surjective. Moreover clearly the identity given by \eqref{pr}  holds for the corresponding matrix invariants. As for \eqref{sec} we have that $\sigma_n(\rho(fg))-\sigma_n(\rho(f)) \sigma_n( \rho(g))=\det(\rho(f)\rho(g))-\det(\rho(f)) \det( \rho(g))=0$  so all the elements $\phi_{\underline h  ,\underline k}$ map to 0.
\begin{theorem}\label{nZ}  The Kernel of $\rho$  is the ideal $K$ of  $A[\sigma_i(p)]$ generated by the elements $ \phi_{\underline h  ,\underline k}$  of Formula \eqref{sec}, when computed using Formula \eqref{pr} and \eqref{comAm}. 
\end{theorem}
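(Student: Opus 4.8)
The plan is to establish $\ker\rho=K$ by the two opposite inclusions. One of them, $K\subseteq\ker\rho$, together with the surjectivity of $\rho$, is already in hand: the computation preceding the statement shows that each $\phi_{\underline h,\underline k}$ maps under $\rho$ into $\det(\rho(f)\rho(g))-\det(\rho(f))\det(\rho(g))=0$, and Donkin's theorem gives surjectivity, so $\rho$ induces a surjection $\bar\rho\colon A[\sigma_i(p)]/K\twoheadrightarrow A[\xi_{i,j}^k]^{PGL(n)}$. The whole problem is therefore to prove $\ker\rho\subseteq K$, i.e.\ that $\bar\rho$ is injective. To organise this I would first transport everything to the Schur side. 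By Theorem \ref{simm} (which in positive characteristic rests on Zubkov's theorem) the target $A[\xi_{i,j}^k]^{PGL(n)}$ is canonically $\mathbb S_{n,A}$, and by Theorem \ref{quo} the latter is $\mathbb S_A/J_n$, where $\mathbb S_A=A[\sigma_i(M)]$ is Zieplies' free polynomial ring on the $\sigma_i(M)$, $M$ a Lyndon word, and $J_n=\ker\pi_n$ is the ideal generated by the polarized forms $\sigma_{m;a_1,\ldots,a_n}(p_1,\ldots,p_m)$ with $m>n$. Under the inclusion of the truncated ring $A[\sigma_i(p)]$ ($i\le n$) into $\mathbb S_A$ the map $\rho$ factors as $A[\sigma_i(p)]\hookrightarrow\mathbb S_A\xrightarrow{\pi_n}\mathbb S_A/J_n$, so that $\ker\rho=A[\sigma_i(p)]\cap J_n$.

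The device for identifying this intersection with $K$ is an explicit retraction. Because $\mathbb S_A$ is a polynomial ring, there is a well-defined homomorphism $\Phi\colon\mathbb S_A\to A[\sigma_i(p)]/K$ sending $\sigma_i(M)\mapsto\overline{\sigma_i(M)}$ for $i\le n$ and $\sigma_j(M)\mapsto 0$ for $j>n$. If one shows that $\Phi$ annihilates $J_n$, then $\Phi$ descends to $\bar\Phi\colon\mathbb S_{n,A}\to A[\sigma_i(p)]/K$, and one checks at once on generators that $\bar\Phi$ is a two-sided inverse of $\bar\rho$; this forces $\bar\rho$ to be an isomorphism and hence $\ker\rho=K$. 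Since $J_n$ is generated as an ideal by the polarized forms $\sigma_{m;a_1,\ldots,a_n}(p_1,\ldots,p_m)$, $m>n$, and $\Phi$ is a ring map, the entire theorem collapses to the single assertion that, for every such form, its image under $\Phi$ — obtained by computing it through Amitsur's Formula \eqref{comAm} and the power formulas \eqref{pr} and then deleting every factor $\sigma_j(q)$ with $j>n$ — already lies in $K$.

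This last assertion is where the real work lies, and the displayed computation for $n=2$ is the exact template to generalise. There one reads the identity $\sigma_{3;1,1,1}(a,b,ba)+\sigma_{4;2,2}(a,b)=\sigma_2(ab)-\sigma_2(a)\sigma_2(b)$, in which a combination of the forbidden polarized forms ($m=3,4>2$) equals the generator $\phi=\sigma_2(ab)-\sigma_2(a)\sigma_2(b)$ of $K$ attached to $f=a$, $g=b$. The plan is to produce such identities systematically: expanding $\sigma_n(fg)-\sigma_n(f)\sigma_n(g)$ for monomial data $f=\sum u_iM_i$, $g=\sum v_iM_i$ by \eqref{comAm} and \eqref{pr} — both of which only ever introduce symbols $\sigma_j$ with $j\le n$ — yields, in each multidegree, an equation whose right-hand side is a combination of the generators $\phi_{\underline h,\underline k}\in K$ and whose left-hand side is a single forbidden polarized form corrected by products of strictly lower ones. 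Ordering the forbidden forms first by $m$ and then by multidegree makes this system triangular, so one solves for each reduced polarized form in turn and concludes by induction that it lies in $K$. The main obstacle is exactly this triangularisation: one must verify that every forbidden polarized form occurs as the leading term of a suitable multiplicativity relation and that the correction terms are genuinely of lower order. The delicate feature is that the whole induction has to be carried out with \emph{integer} coefficients, introducing no denominators, so that it is valid for $A=\mathbb Z$ and, by the base-change compatibility of all the constructions, for an arbitrary field; this is precisely the point at which one must appeal to the characteristic-free results of Donkin and Zubkov rather than to the characteristic-zero trace identities of \cite{P3} and \cite{R}.
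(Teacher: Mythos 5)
Your setup is sound: $K\subseteq\ker\rho$ plus surjectivity of $\rho$ is indeed already established before the statement, and your reduction of the converse to the single assertion $\Phi(J_n)=0$ (equivalently, that every truncated polarized form $\sigma_{m;a_1,\ldots,a_m}(p_1,\ldots,p_m)$ with $m>n$, computed via \eqref{comAm} and \eqref{pr} and with all $\sigma_j(q)$, $j>n$, deleted, lies in $K$) is correct, since $\bar\Phi\circ\bar\rho=\id$ on generators would give injectivity of $\bar\rho$. But that assertion is exactly where your proof stops being a proof. You do not establish the triangularisation you describe; you only observe from the $n=2$ example that a \emph{sum} of two forbidden polarized forms equals a generator $\phi_{\underline h,\underline k}$ of $K$, which by itself does not place either individual form in $K$, and you explicitly flag the verification that ``every forbidden polarized form occurs as the leading term of a suitable multiplicativity relation'' as the main obstacle without carrying it out. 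That combinatorial induction, with integer coefficients and no denominators, is the entire content of the theorem in your approach, so as written the argument has a genuine gap.

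The paper's proof avoids this computation altogether by running your inverse map through Roby's universal property rather than constructing it generator by generator. One considers the assignment $f\mapsto\bar\sigma_n(f)\in A[\sigma_i(p)]/K$: the relations \eqref{pr} and \eqref{comAm} make this a polynomial law homogeneous of degree $n$, and the generators $\phi_{\underline h,\underline k}$ of $K$ are \emph{by definition} the obstructions to its multiplicativity, so modulo $K$ it is a multiplicative polynomial law of degree $n$ into a commutative ring. By \eqref{mulrea} it therefore factors through $\Gamma_n(A\langle X\rangle)_{ab}$, yielding a map $\bar\rho':\Gamma_n(A\langle X\rangle)_{ab}\to A[\sigma_i(p)]/K$ which is inverse to the natural surjection on the generators $\sigma_i(p)$; Theorem \ref{simm} then identifies $\Gamma_n(A\langle X\rangle)_{ab}$ with $A[\xi_{h,k}^{(i)}]^{PGL}$ and the claim follows. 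In other words, the hard syzygy analysis you would need is absorbed into the universal property of the divided power algebra (and into Theorem \ref{simm}, where Donkin--Zubkov is invoked once and for all). If you want to salvage your route, you should either prove your triangularisation in full or replace it by this appeal to the universal property of $\Gamma_n(A\langle X\rangle)_{ab}$.
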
   
\begin{proof} The previous relations express   the identity $\sigma_n(fg)=\sigma_n(f) \sigma_n( g)$.  Consider the  algebra $A[\sigma_i(p)]/K$, and the map $A\langle X\rangle \to A[\sigma_i(p)]/K$ mapping $f\in A\langle X\rangle$ to the class $\bar \sigma_n(f)$  of $ \sigma_n(f)$ modulo $K$.

By construction this is a multiplicative map homogeneous of degree $n$ so it factors through a map  $A\langle X\rangle \to \Gamma_n(A\langle X\rangle)_{ab}\stackrel{\bar\rho}\to A[\sigma_i(p)]/K$. On the other hand  $\Gamma_n(A\langle X\rangle)_{ab}$ is generated by the elements  $\sigma_i(p)$  and the generators of $K$    are 0 in $\Gamma_n(A\langle X\rangle)_{ab}$ hence $\bar\rho$ is an isomorphism and so the claim follows from Theorem \ref{simm}.

\end{proof}

\subsection{The free $n$--Cayley--Hamilton algebra\label{ffch}}
$n$--Cayley--Hamilton algebras over a commutative ring $F$ form a category, where a map  is assumed to commute with the norm.

If the base ring  $F$ is either $\mathbb Z$ or a field, one has a particularly useful description of the  free $n$--Cayley--Hamilton algebra in any set of variables $X=\{x_i\}_{i\in I}$. It will be given in Corollary \ref{frch}.\smallskip

Recall the definition \ref{genmad} of  $F\langle \xi_i\rangle\subset M_n( F[\xi_{h,k}^{(i)}]),\ i\in I,\ h,k=1,\ldots, n  $ the algebra of generic matrices, and the action  of $PGL$  on $ M_n( F[\xi_{h,k}^{(i)}])$ and on  $ F[\xi_{h,k}^{(i)}]$. The algebra $ F[\xi_{h,k}^{(i)}]^{PGL}$ is by definition the algebra of   invariants   of $X$--tuples of $n\times n$ matrices. 
The algebra $ M_n( F[\xi_{h,k}^{(i)}])^{PGL}$ is by definition the algebra of   equivariant maps (polynomial laws)  from    $X$--tuples of $n\times n$ matrices to  $n\times n$ matrices. 

One starts from the free algebra $F\langle x_i\rangle_{ i\in I}$ and the map  to  the generic matrices  $j:=j_{F\langle x_i\rangle}:F\langle x_i\rangle\to F\langle \xi_i\rangle\subset M_n( F[\xi_{h,k}^{(i)}])$, and then compose this with the determinant
$$\begin{CD}
 {F\langle x_i\rangle} @>j_{F\langle x_i\rangle}>> F\langle \xi_i\rangle @>i>>M_n( F[\xi_{h,k}^{(i)}])@>\det >>F[\xi_{h,k}^{(i)}]^{PGL}
\end{CD} $$
\begin{theorem}\label{DoZu}\begin{enumerate}\item In the commutative diagram
\begin{equation}\label{unpp3}
\xymatrix{  F\langle x_i\rangle_{ i\in I}\ar@{->}[r]^{r\mapsto r^{\otimes n}}\ar@{->}[d]_{j_{F\langle x_i\rangle}}&\Gamma_n(F\langle x_i\rangle_{ i\in I}) \ar@{->}[d]^{  N}\ar@{->}[r]^\pi&\Gamma_n(F\langle x_i\rangle_{ i\in I})_{ab}\ar@{->}[d]^{\simeq  \bar N}\\
F\langle \xi_i\rangle\ar@{->}[r]^{\det} &F[\xi_{h,k}^{(i)}]^{PGL}
\ar@{->}[r]^1&F[\xi_{h,k}^{(i)}]^{PGL}
}  
\end{equation} the last map  $\Gamma_n(F\langle x_i\rangle_{ i\in I})_{ab}\stackrel{\bar N}\to F[\xi_{h,k}^{(i)}]^{PGL}$ is an isomorphism.
\item Extend the map $\det\circ j_{F\langle x_i\rangle}$  to a norm $F\langle x_i\rangle_{ i\in I} \otimes_F F[\xi_{h,k}^{(i)}]^{PGL}\to F[\xi_{h,k}^{(i)}]^{PGL}$.  Then this induces a norm compatible homomorphism $$\bar{j}_{F\langle x_i\rangle}\!\!:F\langle x_i\rangle  \otimes_F F[\xi_{h,k}^{(i)}]^{PGL}\stackrel{j  \otimes 1}\to  F\langle \xi_i\rangle  \otimes_F F[\xi_{h,k}^{(i)}]^{PGL}\stackrel{m}\to  M_n( F[\xi_{h,k}^{(i)}])^{PGL} , $$
 with $m$ the multiplication.\item  $\bar{j}_{F\langle x_i\rangle}$ is surjective and its kernel $K$ is generated by the evaluation of the $n$ characteristic polynomial  of all its elements.
  \end{enumerate}

\end{theorem}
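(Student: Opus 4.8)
The plan is to dispatch parts (1) and (2) by reduction to results already in hand and to concentrate the real work on the kernel computation in part (3). For part (1), I would first note that the norm $N$ on $\Gamma_n(F\langle x_i\rangle_{i\in I})$ is, by the universal property of divided powers in Formula \eqref{mulre}, the unique multiplicative law of degree $n$ extending $\det\circ j_{F\langle x_i\rangle}$; it therefore coincides with the factorization $D$ of $\det\circ j_R$, so that $\bar N$ is precisely $D\colon\Gamma_n(F\langle x_i\rangle_{i\in I})_{ab}\to T_n(F\langle x_i\rangle_{i\in I})^{\mathfrak G_n}$ under the identification $T_n(F\langle x_i\rangle_{i\in I})^{\mathfrak G_n}=F[\xi_{h,k}^{(i)}]^{PGL}$ of the ring of invariants of $X$--tuples of matrices. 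Part (1) is then exactly Theorem \ref{simm}. Part (2) I would verify formally: the composite $\det\circ j_{F\langle x_i\rangle}$ is a multiplicative polynomial law homogeneous of degree $n$ taking $PGL$--invariant values, hence a norm with target $F[\xi_{h,k}^{(i)}]^{PGL}$; extending scalars along $F\to F[\xi_{h,k}^{(i)}]^{PGL}$ and using compatibility of $\Gamma_n$ with base change produces the asserted norm, while $m\circ(j\otimes 1)$ lands in the equivariants because the generic matrices are equivariant and the invariants are central, and Diagram \eqref{unpp10} shows that $\bar j_{F\langle x_i\rangle}$ intertwines the two norms.

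For part (3), surjectivity is the first fundamental theorem: by the second assertion of Theorem \ref{FFT} the target $M_n(F[\xi_{h,k}^{(i)}])^{PGL}$ is the algebra of equivariant maps, generated by the generic matrices $\xi_i$ over the invariant ring $F[\xi_{h,k}^{(i)}]^{PGL}$, and $\bar j_{F\langle x_i\rangle}$ hits all these generators since $x_i\otimes 1\mapsto\xi_i$ and $1\otimes c\mapsto c$ for every invariant $c$. The easy half of the kernel statement is equally direct: norm--compatibility identifies $\chi_a(t)$ with $\det(t-\bar j_{F\langle x_i\rangle}(a))$, so the honest Cayley--Hamilton theorem, applied pointwise, gives $\bar j_{F\langle x_i\rangle}(\chi_a(a))=0$; hence the ideal $K_0$ generated by all the elements $\chi_a(a)$ is contained in $K=\ker\bar j_{F\langle x_i\rangle}$.

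The reverse inclusion $K\subseteq K_0$ is the crux, and it is the second fundamental theorem of Procesi--Razmyslov--Zubkov recast in the present language. Here I would pass to the quotient $C:=(F\langle x_i\rangle\otimes_F F[\xi_{h,k}^{(i)}]^{PGL})/K_0$, which inherits the norm and is by construction an $n$--Cayley--Hamilton algebra, so the goal becomes to prove that the induced surjection $C\to M_n(F[\xi_{h,k}^{(i)}])^{PGL}$ is injective. Part (1), together with Theorem \ref{nZ}, already matches the commutative shadows of the two sides: their abelianized divided powers, and hence the relations among the symbolic invariants, agree and equal $F[\xi_{h,k}^{(i)}]^{PGL}$. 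The hard part will be to lift this agreement from the commutative invariant data to the full equivariant algebra, that is, to prove that every equivariant relation is a formal consequence of the matrix Cayley--Hamilton identities already imposed in $C$; this is exactly Zubkov's characteristic--free theorem, and it is the one step where no soft categorical argument is available and where the careful combinatorial study of $\Gamma_n(A\langle X\rangle)$ is indispensable. Once it is granted, $C\to M_n(F[\xi_{h,k}^{(i)}])^{PGL}$ is an isomorphism, $K=K_0$, and the theorem follows.
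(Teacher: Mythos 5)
Your proposal is correct and follows essentially the same route as the paper, which gives no independent argument but cites \cite{depr0} (Theorem 18.17 and Remark 18.18) resting on Donkin and Zubkov: part (1) is indeed just Theorem \ref{simm} under the identification of $T_n(F\langle x_i\rangle)^{\mathfrak G_n}$ with the matrix invariants, part (2) is formal, and for part (3) you correctly isolate the inclusion $K\subseteq K_0$ as the content of Zubkov's characteristic-free second fundamental theorem, exactly where the paper also defers to the literature.
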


This is proved in \cite{depr0},Theorem 18.17  and Remark 18.18 based on  the theorems of Donkin \cite{Don} and Zubkov, \cite{Zub}. 
\begin{remark}\label{sftg}
\begin{enumerate}\item If the set of  variables has $\ell $ elements, then  the algebra $F[\xi_{h,k}^{(i)}]^{PGL}$ equals  the algebra $T_n(\ell)$  and the algebra $M_n( F[\xi_{h,k}^{(i)}])^{PGL}$ equals $\mathcal S_n(\ell)$  of  page \pageref{prSl}.
\item 
Using Theorem \ref{quo}
 part 3. can be equivalently stated as follows:
 
Consider    the map $\rho_n:  \mathbb S_F\langle x_i\rangle_{ i\in I}\stackrel{ \tau_n}\to   \mathbb S_{n,F }\langle x_i\rangle_{ i\in I}\stackrel{ \tilde N}\to    F[\xi_{h,k}^{(i)}]^{PGL}$  and:\begin{equation}\label{eqac}
 F\langle x_i\rangle_{ i\in I} \otimes_F\mathbb S_F\langle x_i\rangle_{ i\in I}\stackrel{\tau_n\otimes \rho_n}\to  F\langle \xi_i\rangle_{ i\in I} \otimes_F F[\xi_{h,k}^{(i)}]^{PGL}\stackrel{m}\to  M_n( F[\xi_{h,k}^{(i)}])^{PGL} .
\end{equation}This map is surjective and its kernel is the ideal generated by   the evaluation of the $m$ characteristic polynomials  of all its elements for all $m>n$ and all the corresponding evaluations of the $\sigma_i$ in this ideal.\end{enumerate}

\end{remark}

As a Corollary one has
\begin{corollary}\label{frch} The algebra $M_n( F[\xi_{h,k}^{(i)}])^{PGL}$ is a free algebra on the generators $\xi_i$  in the category of  $n$ Cayley--Hamilton $F$--algebras.

\end{corollary}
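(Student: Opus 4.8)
The plan is to verify the universal property of a free object directly, taking Theorem \ref{DoZu} as the main input. Write $\mathcal S_n := M_n(F[\xi_{h,k}^{(i)}])^{PGL}$ and $T := F[\xi_{h,k}^{(i)}]^{PGL}$ for its ring of invariants; by Theorem \ref{DoZu}(1) we identify $T$ with $\Gamma_n(F\langle x_i\rangle)_{ab}$, and $\mathcal S_n$ carries the norm $N=\det$ valued in $T$. First I would record that $(\mathcal S_n,N)$ is genuinely an $n$-CH algebra: for an equivariant map $a$ the polynomial $N(t-a)=\det(t-a)$ is the pointwise characteristic polynomial, and the pointwise Cayley--Hamilton identity holds on the Zariski dense set of matrix tuples, hence identically; this persists after every base change $B$ because the whole construction is compatible with base change.

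The core is the universal property. Fix any $n$-CH $F$-algebra $R$ with norm $N_R:R\to Z_R$ and a family $r_i\in R$, $i\in I$. The assignment $x_i\mapsto r_i$ gives a homomorphism $F\langle x_i\rangle\to R$; composing $N_R$ with it yields a degree-$n$ multiplicative polynomial law $F\langle x_i\rangle\to Z_R$, which by the representability in Formula \eqref{mulrea} factors through a ring homomorphism $\psi:\Gamma_n(F\langle x_i\rangle)_{ab}=T\to Z_R$. Using that $R$ is a $Z_R$-algebra I then form the homomorphism $F\langle x_i\rangle\otimes_F T\to R$, equal to $x_i\mapsto r_i$ on the first factor and to $\psi$ followed by $Z_R\to R$ on the second. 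By parts (2) and (3) of Theorem \ref{DoZu} this composite must kill the kernel $K$, since $K$ is generated by evaluations $\chi_a(a)$ of characteristic polynomials and every such element maps to $0$ by the defining CH-identity in $R$ (after the appropriate base change). Hence it descends to a homomorphism $\phi:\mathcal S_n\to R$ with $\phi(\xi_i)=r_i$, and by construction $\phi$ intertwines $N=\det$ with $N_R$ through $\psi$, so $\phi$ is norm-compatible.

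For uniqueness I would use that $\mathcal S_n$ is generated, as a CH-algebra, by the $\xi_i$ alone. Indeed the map $\bar j$ of Theorem \ref{DoZu} is surjective, its source is generated by the $x_i$ and by $T$, and $T=\Gamma_n(F\langle x_i\rangle)_{ab}$ is in turn generated by the classes $\sigma_i(M)$ attached to products $M$ of the generic matrices — that is, by the coefficients of $\det(t-M(\xi))=N(t-M(\xi))$, which are pure norm-data of the $\xi_i$. Any norm-compatible homomorphism $\phi'$ out of $\mathcal S_n$ therefore satisfies $\phi'(\sigma_i(M))$ determined by $N_R(t-M(r))$ once the images $\phi'(\xi_i)=r_i$ are fixed, so $\phi'=\phi$.

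The main obstacle is the bookkeeping in the second step: matching the norm $N_R$, which a priori takes values in an arbitrary commutative ring $Z_R$, with the universal target $T=\Gamma_n(F\langle x_i\rangle)_{ab}$, and then checking that the two a priori independent pieces of data — the algebra map on generic matrices and the ring map $\psi$ on invariants — glue to a single morphism that is simultaneously multiplicative, unital and norm-compatible. Theorem \ref{DoZu} does the heavy lifting: part (1) supplies the identification $T\cong\Gamma_n(F\langle x_i\rangle)_{ab}$ that makes $\psi$ available, while parts (2) and (3) guarantee that imposing the single family of characteristic-polynomial relations is exactly what is needed to pass from the free-algebra-plus-invariants presentation to $\mathcal S_n$.
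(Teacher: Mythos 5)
Your argument is correct and follows essentially the same route as the paper's proof: build the homomorphism on the free algebra, use the factorization of the induced degree-$n$ multiplicative law through $\Gamma_n(F\langle x_i\rangle)_{ab}\cong F[\xi_{h,k}^{(i)}]^{PGL}$ to get the map on the tensor product $F\langle x_i\rangle\otimes_F F[\xi_{h,k}^{(i)}]^{PGL}$, and invoke Theorem \ref{DoZu}(3) to see that the kernel $K$ is killed because its generators are characteristic-polynomial evaluations. Your additional verifications (that $\det$ makes $M_n(F[\xi_{h,k}^{(i)}])^{PGL}$ an $n$-CH algebra, and the uniqueness of the induced morphism) are sound and fill in details the paper leaves implicit.
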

\begin{proof}
Let $R$ be an  $n$ Cayley--Hamilton $F$--algebra, with norm algebra $A$ and consider a set $r_i,\ i\in I$ of elements of $R$.  

Then one deduces a homomorphism $f:F\langle x_i\rangle_{ i\in I} \to R,\ x_i\mapsto r_i$. Next one has a commutative diagram
$$\begin{CD}
F\langle x_i\rangle @>f>> R \\@VVV@VVV\\\Gamma_n(F\langle x_i\rangle) @>\bar f>>\Gamma_n(R)@>N >>A
\end{CD} $$
From this one has a homomorphism $F[\xi_{h,k}^{(i)}]^{PGL}\to A$ and one of norm algebras $\bar f:F\langle x_i\rangle_{ i\in I} \otimes_F F[\xi_{h,k}^{(i)}]^{PGL}\to R$. 

Clearly  (by Theorem \ref{DoZu}   3)) $\bar f $  vanishes on the kernel $K$  of the quotient map $\bar\pi_n:F\langle x_i\rangle_{ i\in I} \otimes_F F[\xi_{h,k}^{(i)}]^{PGL}\to  M_n( F[\xi_{h,k}^{(i)}])^{PGL}$ so $\bar f$ factors through a norm compatible map $ M_n( F[\xi_{h,k}^{(i)}])^{PGL} \to R$.
\end{proof}
\begin{corollary}\label{frch1} Every  $n$ Cayley--Hamilton $F$--algebra $R$ is the quotient,   in the category of  $n$ Cayley--Hamilton $F$--algebras, of an algebra $M_n( F[\xi_{h,k}^{(i)}])^{PGL}$ in some  generators $\xi_i,\ i\in I$.

In particular every  $n$ Cayley--Hamilton $F$--algebra $R$ satisfies all polynomial identities with coefficients in $F$ of $n\times n$ matrices.

\end{corollary}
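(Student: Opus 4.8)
The plan is to derive both assertions from the freeness of the CH algebra $M_n(F[\xi_{h,k}^{(i)}])^{PGL}$ proved in Corollary \ref{frch}, combined with the fact that this algebra is by construction a subalgebra of a matrix algebra over a commutative ring. First I would construct the desired surjection. Given the $n$-Cayley--Hamilton $F$-algebra $R$, choose a family $r_i,\ i\in I$, of elements generating $R$ as an $F$-algebra; one may simply take $I=R$ and $r_i=i$, so that surjectivity is automatic. By the universal property of the free $n$-Cayley--Hamilton algebra established in Corollary \ref{frch}, this family determines a norm-compatible homomorphism $\phi:M_n(F[\xi_{h,k}^{(i)}])^{PGL}\to R$ with $\phi(\xi_i)=r_i$. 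Since the $r_i$ lie in the image of $\phi$ and generate $R$, the map $\phi$ is surjective, so $R\cong M_n(F[\xi_{h,k}^{(i)}])^{PGL}/\ker\phi$ exhibits $R$ as a quotient, in the category of $n$-Cayley--Hamilton algebras, of the free CH algebra on the generators $\xi_i$. This gives the first assertion.

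For the second assertion I would exploit the matrix realization. By its very construction in Theorem \ref{DoZu} (see also Definition \ref{genmad}), the free CH algebra $M_n(F[\xi_{h,k}^{(i)}])^{PGL}$ is a subalgebra of the full matrix algebra $M_n(F[\xi_{h,k}^{(i)}])$ over the commutative polynomial ring $F[\xi_{h,k}^{(i)}]$. Any subalgebra of $M_n(B)$, with $B$ a commutative $F$-algebra, satisfies every polynomial identity with coefficients in $F$ of $n\times n$ matrices, since such an identity vanishes identically on $M_n(B)$ itself. Hence $M_n(F[\xi_{h,k}^{(i)}])^{PGL}$ satisfies all $n\times n$ matrix identities.

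Finally I would transport these identities across $\phi$. Polynomial identities pass to homomorphic images: if $p(x_1,\ldots,x_m)$ is a matrix identity and $r_1,\ldots,r_m\in R$, one lifts each $r_j$ to some $s_j$ with $\phi(s_j)=r_j$, using the surjectivity from the first step, and then $p(r_1,\ldots,r_m)=\phi\bigl(p(s_1,\ldots,s_m)\bigr)=\phi(0)=0$ because $\phi$ is an algebra homomorphism and $p$ vanishes on the free CH algebra. Thus $R$ satisfies all polynomial identities of $n\times n$ matrices. The same argument applied to the base change $B\otimes_F R$, which is again a quotient of $M_n(B[\xi_{h,k}^{(i)}])^{PGL}\subset M_n(B[\xi_{h,k}^{(i)}])$, shows that the identities persist over every commutative $F$-algebra $B$, in the spirit of Remark \ref{ple}.

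I expect no serious mathematical obstacle: essentially all the weight is carried by Corollary \ref{frch} and the matrix realization. The only delicate point is purely formal, namely checking that surjectivity of $\phi$ as a map of ordinary algebras suffices to present $R$ as a genuine \emph{CH}-quotient, i.e. that the norm induced on $M_n(F[\xi_{h,k}^{(i)}])^{PGL}/\ker\phi$ coincides with the norm of $R$. This is immediate from the norm-compatibility of $\phi$ supplied by Corollary \ref{frch}, so the argument closes without further work.
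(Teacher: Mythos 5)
Your proposal is correct and follows exactly the route the paper intends: the paper states this corollary without proof as an immediate consequence of Corollary \ref{frch} (choose generators, obtain a surjective norm-compatible map from the free CH algebra) together with the realization of $M_n(F[\xi_{h,k}^{(i)}])^{PGL}$ as a subalgebra of $M_n(F[\xi_{h,k}^{(i)}])$, whence all $n\times n$ matrix identities hold and pass to the quotient. Nothing is missing.
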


\subsection{Azumaya algebras}
An important class of CH--algebras are Azumaya algebras, \cite{Az}, \cite{A-G2}.  For our purpose we may take as definition:
\begin{definition}\label{azza}
An   algebra  $R$ with   center $Z$ is Azumaya of rank $n^2$ over   $Z$ if there is a faithfully flat  extension $Z\to W$ so that $W\otimes_ZR\simeq M_n(W)$.
\end{definition}    
Then it is easily seen that the determinant,  of the matrix algebra  $M_n(W)$  restricted to $R$  maps to $Z$  giving rise to a multiplicative map called {\em the reduced norm $N:R\to Z.$}  Then $R$, with this norm,  is an $n$--CH algebra.\smallskip

We want to see that this is essentially the only norm on $R$.

The next Theorem is due in part to Ziplies (but his proof is quite complicated and very long) \cite{Zip}.
\begin{theorem}\label{nazz}
Let $R$ be a  rank $n^2$  Azumaya algebra over $Z$. $ (R^{\otimes m})^{S_m}_{ab}=0$ if $m$ is not a multiple of $n$.  If $m=in$ then the map $ \bar N^i:(R^{\otimes m})^{S_m}_{ab}\to Z$, 
induced by the reduced norm $N^i:R\to Z,$  is an isomorphism.\end{theorem}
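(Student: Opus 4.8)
The plan is to reduce to the split case $R=M_n(W)$ by faithfully flat descent and there invoke the Schur--Weyl computation already recorded in the Example surrounding Formula~\eqref{swd}. Since an Azumaya algebra of rank $n^2$ is finitely generated projective over $Z$, \eqref{simten} applies and $(R^{\otimes m})^{S_m}_{ab}=\Gamma_m(R)_{ab}$. Both $\Gamma_m$ and abelianization commute with base change: the identity $\Gamma_{m,W}({}_WR)={}_W\Gamma_{m,Z}(R)$ is recorded in the text, and since $W$ is central the commutator ideal of ${}_W\Gamma_m(R)$ is $W\otimes_Z$ times that of $\Gamma_m(R)$, whence
\[
\Gamma_m({}_WR)_{ab}\simeq W\otimes_Z\Gamma_m(R)_{ab}.
\]

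Now choose a faithfully flat $Z\to W$ with ${}_WR\simeq M_n(W)$. The reduced norm $N$ is defined precisely so that under this isomorphism the base change of $N$ is $\det$; hence the degree $in$ law $N^i\colon r\mapsto N(r)^i$ base-changes to $\det^i$, and $W\otimes_Z\bar N^i$ is identified with the map $\Gamma_{in}(M_n(W))_{ab}\to W$ induced by $\det^i$. This reduces everything to the split case, using faithful flatness twice: from $W\otimes_Z M=0\Rightarrow M=0$ I obtain the vanishing of part~1, and from the fact that a $Z$-linear map is an isomorphism iff it is one after $\otimes_Z W$ I obtain the isomorphism of part~2. It therefore remains to show $\Gamma_m(M_n(W))_{ab}=0$ for $n\nmid m$ and that the map induced by $\det^i$ is an isomorphism onto $W$ when $m=in$.

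For the split case set $V=W^n$, so $M_n(W)=\mathrm{End}(V)$ and, as in the Example,
\[
\Gamma_m(\mathrm{End}\,V)_{ab}=\bigl(\mathrm{End}_{W[S_m]}(V^{\otimes m})\bigr)_{ab}.
\]
Over a field of characteristic $0$ this endomorphism algebra decomposes by \eqref{swd} as $\bigoplus_{\lambda\vdash m,\ ht(\lambda)\le n}\mathrm{End}(S_\lambda V)$; abelianization annihilates every non-abelian simple summand and retains precisely the one-dimensional ones, which occur exactly for the $n$-row rectangle $\lambda=(i^n)$. This forces $m=in$, with surviving summand $\mathrm{End}\bigl((\bigwedge^n V)^{\otimes i}\bigr)=W$ and universal law $\det^i$, exactly as in \eqref{nodd}; in particular the abelianization vanishes when $n\nmid m$.

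The main obstacle is making this split computation characteristic-free, as \eqref{swd} is unavailable over an arbitrary commutative ring. Here one replaces the decomposition by the good filtration of $V^{\otimes m}$, whose associated graded is \eqref{swd}, as flagged after \eqref{nodd}: the rectangle $(i^n)$ remains the only partition contributing a rank-one, hence abelian, quotient, and one checks the induced law is still $\det^i$. Equivalently, via \eqref{mulrea} one argues that a multiplicative homogeneous degree $m$ law $M_n\to A$ restricts on diagonal matrices to a symmetric multiplicative degree $m$ law in $n$ arguments, necessarily $(t_1\cdots t_n)^i$, forcing $m=in$ and agreement with $\det^i$ on regular semisimple elements and hence everywhere. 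Both routes amount to the characteristic-free statement that the only polynomial characters of $GL_n$ are the powers of the determinant; once this is supplied, the faithfully flat descent above completes the proof.
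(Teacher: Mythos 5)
Your reduction to the split case is sound (the base-change identity $\Gamma_m({}_WR)_{ab}\simeq W\otimes_Z\Gamma_m(R)_{ab}$ and the two uses of faithful flatness are exactly right, and match the paper's first step), and your characteristic-zero Schur--Weyl computation agrees with the Example around \eqref{swd}--\eqref{nodd}. But the split case must be settled over an \emph{arbitrary} commutative ring $W$, and neither of your two proposed routes actually gets there. The good-filtration route founders because a good filtration of $V^{\otimes m}$ as a $GL_n$-module does not induce an algebra filtration of $\mathrm{End}_{W[S_m]}(V^{\otimes m})$ whose abelianization can be read off the associated graded; ``the rectangle remains the only partition contributing a rank-one quotient'' is a statement about $\mathrm{gr}$, and abelianization does not commute with passing to $\mathrm{gr}$. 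The second route has two gaps: over a general $W$ the restriction of a multiplicative law to the diagonal torus factors through $\Gamma_m(W^n)\simeq\bigoplus_{|h|=m}W$, a product of rings, so without an argument about the orthogonal idempotents (and their permutation by $S_n$) it need not be a single monomial $(t_1\cdots t_n)^i$; and ``agreement on regular semisimple elements and hence everywhere'' invokes Zariski density and the determination of a polynomial law by its values on elements, both of which fail over a general commutative ring (Remark \ref{fpf}). Your closing slogan --- that the only polynomial characters of $GL_n$ are powers of the determinant --- is the right field-level input, but the bridge from ``true over every field'' to ``true over every commutative ring'' is exactly the missing step.

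The paper closes that bridge, and also handles the cases $n\nmid m$ and $m=n$, quite differently and more elementarily. For $m=n$ and for $n\nmid m$ it works directly over $\mathbb Z$ with Amitsur's Formula \eqref{comAm} applied to the full-cycle permutation matrix $C=e_{1,2}+\cdots+e_{n,1}$ and the elementary matrices: cyclic equivalence kills $\sigma_h(e_{i,j})$ for $i\neq j$, the only contributing Lyndon word is $e_{1,1}$, and multiplicativity of $\sigma_m$ on $C^n=1$ gives the contradiction $1=0$ when $n\nmid m$ --- no representation theory at all. For $m=in$ it proves Lemma \ref{multpo} over an algebraically closed field (using that $SL_n$ is generated by commutators and that $GL_n$ is dense), concludes $F\otimes A=F$ for \emph{every} field $F$, and then uses that $A=(M_n(\mathbb Z)^{\otimes in})^{S_{in}}_{ab}$ is a finitely generated abelian group to force $A=\mathbb Z$; the resulting splitting of the exact sequence \eqref{perd} is what yields the statement after base change to an arbitrary $B$, hence to the faithfully flat $W$ splitting $R$. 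If you want to salvage your approach, you should import precisely this last device: prove your split statement over all fields, then descend to $\mathbb Z$ by finite generation, then base-change up.
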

  \begin{proof}
By faithfully flat descent we may reduce to the case  $R=M_n(Z)$.

We first show the statement for $(M_n(\mathbb Z)^{\otimes n})^{S_n}_{ab}$.  Apply Proposition \ref{labase} to the basis of elementary matrices $e_{i,j}$. We want to prove that for all   $e_{i,j}$ we have  $\sigma_h(e_{i,j})\in \mathbb Z$. If $i\neq j$  we have $$\sigma_h(e_{i,j})=\sigma_h(e_{i,i}e_{i,j})=\sigma_h(e_{i,j}e_{i,i})=0,$$ $$ \sigma_h(e_{i,i})=\sigma_h(e_{i,j}e_{j,i})=\sigma_h(e_{j,i}e_{i,j})=\sigma_h(e_{j,j})  .$$ 
Apply now Amitsur's Formula \eqref{comAm}   
  to  the  $\sigma_j(C)$, $C$ the permutation matrix    $C:=e_{1,2}+e_{2,3}+\ldots+e_{n-1,n}+e_{n,1}$, of the full cycle $ (1,2,\ldots,n)\in S_n$. 
  
The only non zero monomials in the $x_i=e_{i,i+1}$   have either value  some $e_{i,j},\ j\neq i$ or some $e_{h,h}$, but of these the only Lyndon word is $x_1x_2\ldots x_n=e_{1,1}$  of degree $n$.  

Since $\sigma_j(a)=0,\ \forall j>n$ we deduce
$$ \sigma_i( e_{1,1})= \sigma_i( x_1x_2\ldots x_n)=(-1)^{ (n- i)}\sigma_{i\cdot  n}(e_{1,2}+ \ldots+e_{n-1,n}+e_{n,1})=0,  \forall i>1.$$
We claim that $ \sigma_1( e_{1,1})=(-1)^{n-1}\sigma_n(C)=1$, which completes the computation.  

Now $\sigma_n$ is a multiplicative map and so it is 1 on the     alternating group $A_n\subset S_n$.  If $n$ is odd then $C\in A_n$    so $\sigma_1( e_{h,h})=1$. If $n=2k$ then $C^2\in A_n$.   Set $a:=\sigma_n(C )$, so $ \sigma_1( e_{1,1})=-a$ , we have  $a^2=\sigma_n(C^2)=1$.

Let $A=-e_{1,1}+\sum_{i=2}^ne_{i,i}$

 Apply to $A$ Formula \eqref{comAm}, a primitive monomial in the terms of $A$ vanishes unless it is   one  $e_{j,j}.$ So 
$$\sigma_{ n}(A)=-\prod_{h=1}^{2k}\sigma_1( e_{h,h}) =-a^{2k}=-1.$$  
Now $\det(AC)=1$  so if $n\geq 4$ it  is a product of commutators so 
$$1=\sigma_{ n}(AC)=-a\implies \sigma_1( e_{h,h})=1.$$
For $n=2$  one may check directly that $$ A=(1+e_{1,2})(1+e_{2,1}) (1-e_{1,2})(1-e_{2,1}) =3 e_{1,1}-e_{1,2}+e_{2,1}$$ is a commutator  and deduce from Amitsur's Formula for $1=\sigma_2(A)$:
$$1=-\sigma_{1}(-e_{1,2} e_{2,1}) =\sigma_{1}( e_{1,1}) .$$

Next  consider $(M_n(\mathbb Z)^{\otimes k})^{S_k}_{ab}$ for $k$ not a multiple of $n$.

The same argument shows that in this case $\sigma_k(C)=0$.  But $C^n=1$ and $\sigma_k$ is multiplicative  so $1=\sigma_k(C^n)=0$.

The case $k=in$  seems to be difficult to attack with this method so we use a different approach.
\end{proof}   
\begin{lemma}\label{multpo}
Let $F$ be an algebraically closed field, $A$   a  commutative algebra over $F$ and $f:M_n(F)\to A$  be a multiplicative polynomial map of degree $m$. Then $m=in$ and $f(a)=\det(a)^i1_A.$
\end{lemma}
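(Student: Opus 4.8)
The plan is to bypass the representation--theoretic description of $\Gamma_m(M_n(F))_{ab}$ (which is transparent only in characteristic $0$, via the Schur--Weyl decomposition \eqref{swd}) and argue directly on the group $\GL_n(F)$, so that the proof is characteristic free; this is exactly the ``different approach'' promised at the end of the proof of Theorem \ref{nazz}. We may assume $A\neq 0$, the statement being vacuous otherwise. By the factorization \eqref{mulrea} the law $f$ corresponds to a unital algebra homomorphism $\Gamma_m(M_n(F))_{ab}\to A$, whence $f(1)=1_A$. If $a\in\GL_n(F)$ then $f(a)f(a^{-1})=f(1)=1_A$, so $f(a)\in A^{\times}$, and $f$ restricts to a homomorphism of abstract groups $\GL_n(F)\to A^{\times}$. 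Since $A$ is commutative, $A^{\times}$ is abelian, so $f$ kills the derived subgroup $[\GL_n(F),\GL_n(F)]$. The first step, and the one I expect to carry the most weight, is the classical fact that over an infinite field $\SL_n(F)=[\GL_n(F),\GL_n(F)]$; granting this, $f(a)$ depends only on $\det(a)$, so there is a homomorphism $g\colon F^{\times}\to A^{\times}$ with $f(a)=g(\det a)$ for all $a\in\GL_n(F)$.

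Next I would pin down $g$. Composing with the section $s\colon t\mapsto \mathrm{diag}(t,1,\dots,1)$, which satisfies $\det\circ s=\id$, gives $g=f\circ s$; since $f$ is a homogeneous polynomial in the matrix entries and $s$ has polynomial entries, $g(t)=\sum_{k\ge 0}c_k t^{k}$ is a polynomial in $t$ with coefficients $c_k\in A$. Writing out $g(st)=g(s)g(t)$ and comparing coefficients (legitimate because $F$ is infinite) forces $c_kc_\ell=0$ for $k\neq\ell$ and $c_k^2=c_k$, while $g(1)=f(1)=1_A$ gives $\sum_k c_k=1_A$; thus the $c_k$ form a complete system of orthogonal idempotents.

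Finally I would feed in homogeneity. Applying $f$ to the scalar matrix $\mu\cdot 1$ gives $f(\mu 1)=\mu^{m}1_A$ on one hand and $g(\det(\mu 1))=g(\mu^{n})$ on the other, so $g(\mu^{n})=\mu^{m}1_A$ for every $\mu\in F^{\times}$. Substituting the idempotent expansion yields $\sum_k c_k\mu^{nk}=\mu^{m}\sum_k c_k$ for all $\mu$, and comparing coefficients of the distinct powers $\mu^{nk}$ and $\mu^{m}$ shows: if $n\nmid m$ the coefficient of $\mu^{m}$ forces $1_A=0$, contradicting $A\neq 0$; hence $n\mid m$, say $m=in$, and then $c_i=1_A$ with all other $c_k=0$. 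Therefore $g(t)=t^{i}$ and $f(a)=\det(a)^{i}$ for $a\in\GL_n(F)$. Since $\GL_n(F)$ is Zariski dense in $M_n(F)$ and, over the infinite field $F$, a polynomial law is determined by its values (Remark \ref{fpf}), the two laws $f$ and $\det^{i}$ agree everywhere, completing the argument. Note that no hypothesis on $F$ is used beyond its being infinite, so algebraic closedness is needed only for convenient phrasing; the two delicate points are the identification of $\SL_n$ as the commutator subgroup and the characteristic--free coefficient extraction, both of which hold precisely because $F$ is infinite.
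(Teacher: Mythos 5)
Your proof is correct, and while it shares the paper's skeleton (kill the commutator subgroup, evaluate on scalars, extend by Zariski density), it differs in two substantive ways. First, the paper's proof is not self-contained on the divisibility point: it opens with ``we have already shown that multiplicative maps can exist only for degrees multiples of $n$,'' importing the earlier computation in the proof of Theorem \ref{nazz} with the cycle matrix $C=e_{1,2}+\dots+e_{n,1}$, where $C^n=1$ and Amitsur's formula give $\sigma_k(C)=0$ for $n\nmid k$, hence $1=\sigma_k(C^n)=0$. You instead derive $n\mid m$ internally, from the orthogonal-idempotent structure of the coefficients of the multiplicative polynomial $g$ on $F^{\times}$ confronted with the homogeneity constraint $g(\mu^n)=\mu^m 1_A$; this makes the lemma independent of the divided-power computations and is a clean alternative. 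Second, the paper uses algebraic closedness in an essential way at the step $a=b\lambda$, $b\in SL(n,F)$, $\lambda$ an $n$-th root of $\det(a)$, whereas you factor $f$ through $\det$ via $[GL(n,F),GL(n,F)]=SL(n,F)$ and the section $t\mapsto\mathrm{diag}(t,1,\dots,1)$, so your argument works over any infinite field --- a genuine (if mild) strengthening, consistent with the paper's subsequent reduction from $\mathbb Z$ to arbitrary fields via the sequence \eqref{perd}. The only caveats, which apply equally to the paper's own proof, are that $f(1)=1_A$ rests on the convention that the induced homomorphism $\Gamma_m(M_n(F))_{ab}\to A$ is unital, and that $SL(n,F)\subseteq[GL(n,F),GL(n,F)]$ (equivalently, perfectness of $SL(n,F)$) is being invoked as a known fact valid over infinite fields.
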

\begin{proof}
We  have already shown that multiplicative maps can exist only for degrees multiples of $n$ so assume that $m=in$.  For $\lambda$ a scalar matrix, since $f(1)=1_A$  one must have $f(\lambda)=\lambda^m1_A$. If $a\in SL(n,F)$  then $a$ is a product of commutators so that $f(a)=1_A$.

If $a\in GL(n,F)$ we can write $a=b\lambda,\ b\in SL(n,F)$ and $\lambda$ a scalar so that 
$f(a)=\lambda^m1_A=\det(a)^i1_A.$ Since $GL(n,F)$ is Zariski dense in $M_n(F)$ and $f$ is a polynomial it follows that for all matrices $a$ we have $f(a)=\det(a)^i1_A.$ 
\end{proof}
We claim  that 
$A:=(M_n(F)^{\otimes in})^{S_{in}}_{ab}=F.$ Let $\pi$ be the projection  of $(M_n(F)^{\otimes in})^{S_{in}}$ to $A$. If $a\in M_n(F)$ the map $a\mapsto \pi(a^{\otimes in})$  is a multiplicative polynomial map  so it is $\det(a)^i1_A$ and maps  $M_n(F)$ to $F1_A$. Now the elements $a^{\otimes in}$ span $(M_n(F)^{\otimes in})^{S_{in}}$  and by construction $\pi$ is surjective so the claim follows.

Now let us pass to the general case $A:=(M_n(\mathbb Z)^{\otimes in})^{S_{in}}_{ab}.$  We have $A  =(M_n(\mathbb Z)^{\otimes in})^{S_{in}}/J$  where $J$ is the ideal generated by commutators. The algebra $A$ as abelian group if finitely generated. For any field $F$  we have 
 the exact sequence
 \begin{equation}\label{perd}
\begin{CD}
\quad 0@>>>J@>>>(M_n(\mathbb Z)^{\otimes in})^{S_{in}}@>>>A@>>>0\quad\\@.
F\otimes J@>i>>F\otimes (M_n(\mathbb Z)^{\otimes in})^{S_{in}}@>\pi>>F\otimes A@>>>0
\end{CD}
\end{equation}  Now $F\otimes (M_n(\mathbb Z)^{\otimes in})^{S_{in}}= (M_n(F)^{\otimes in})^{S_{in}}$ and $i(F\otimes J)$ is the ideal of $(M_n(F)^{\otimes in})^{S_{in}}$  generated by commutators and $\pi$ is surjective  so by the previous Lemma $F\otimes A=F$.

Since this is true for all $F$ of all characteristics  one must have $A=\mathbb Z$. 

In particular the first exact sequence splits and so for all commutative rings  $B$ one has
\begin{equation}\label{perd1}
\begin{CD}
0@>>>B\otimes J@>i>>(M_n(B)^{\otimes in})^{S_{in}}@>\pi>>B@>>>0
\end{CD}
\end{equation} and
$$ B= (M_n(B)^{\otimes in})^{S_{in}}_{ab}.$$\bigskip
 
\paragraph{Azumaya algebras and invariants}For  Azumaya algebras the Problem  of Remark \ref{pro} has a positive answer:\begin{theorem}\label{inaz}
If $R$ is a rank $n^2$ Azumaya algebra, over its center $Z$ the map  $D:Z=\Gamma_n(R)_{ab} \to T_n(R)^{\mathfrak G_n}$  is an isomorphism.
\end{theorem}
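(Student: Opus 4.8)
The plan is to reduce to the split case by faithfully flat descent, exploiting that Theorem \ref{nazz} (with $i=1$, $m=n$) already identifies the source: the reduced norm gives an isomorphism $\bar N\colon \Gamma_n(R)_{ab}=(R^{\otimes n})^{S_n}_{ab}\simeq Z$. So it suffices to prove that $D\colon Z\to T_n(R)^{\mathfrak G_n}$ is an isomorphism. By Definition \ref{azza} choose a faithfully flat extension $Z\to W$ with $R_W:=W\otimes_ZR\simeq M_n(W)$; I will check that $D$ becomes an isomorphism after $-\otimes_ZW$ and then descend.

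First I would verify that both sides and the map are compatible with the flat base change $Z\to W$. For the source, the base-change formula $\Gamma_{n,W}(R_W)={}_W\Gamma_{n,Z}(R)$ together with the right-exactness of abelianization gives $\Gamma_n(R)_{ab}\otimes_ZW\simeq\Gamma_n(R_W)_{ab}$. For the target, the point is that through the structure map $s\colon Z\to T_n(R)$ (sending a central $z$ to the scalar part of $\mathtt j_R(z)\in M_n(T_n(R))$, i.e. to the central character), the $W$-algebra $T_n(R)\otimes_ZW$ represents the functor of those $n$-dimensional representations of $R_W$ whose central character on $W=Z(R_W)$ is the prescribed one; hence $T_n(R)\otimes_ZW\simeq T_n(R_W)$ compatibly with the $\mathfrak G_n$-coaction $\eta$ of \eqref{coac}. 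Since $W$ is flat over $Z$ and the invariants are the kernel of $a\mapsto \eta(a)-1\otimes a$ (Remark \ref{abin}), invariants commute with $-\otimes_ZW$, so $T_n(R)^{\mathfrak G_n}\otimes_ZW\simeq T_n(R_W)^{\mathfrak G_n}$. Finally $\mathtt j_{R_W}=\mathtt j_R\otimes_ZW$ and the determinant commute with base change, so $D_{R_W}=D_R\otimes_ZW$.

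It then remains to treat $R_W=M_n(W)$ directly. Every $A$-algebra endomorphism of $M_n(A)$ sends the matrix units to a full system of matrix units, and a double-centralizer argument shows such a map is automatically an isomorphism; thus $\hom_{\mathcal R}(M_n(W),M_n(A))$ splits as the choice of a central character $W\to A$ together with an element of $\mathfrak G_n(A)=\mathrm{Aut}_A(M_n(A))$, giving $T_n(M_n(W))\simeq W\otimes_{\mathbb Z}P_n$. Under this splitting the action of \eqref{unpp} is translation on the $P_n$-factor, so the invariance condition $\eta(a)=1\otimes a$ becomes $\Delta(p)=1\otimes p$, which forces $p=(\mathrm{id}\otimes\epsilon)\Delta(p)=\epsilon(p)1\in\mathbb Z$; hence $(P_n)^{\mathfrak G_n}=\mathbb Z$ and $T_n(M_n(W))^{\mathfrak G_n}\simeq W$. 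Because the universal representation of $M_n(W)$ is conjugation of the tautological one by the generic matrix $X$ of \eqref{cogm}, one has $\det\mathtt j_{M_n(W)}(a)=\det(XaX^{-1})=\det(a)$; via $\bar N$ this shows $D$ is the structure map, namely the identity of $W$, and in particular an isomorphism. Descending along the faithfully flat $Z\to W$ then yields that $D_R$ is an isomorphism.

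The \textbf{main obstacle} is the split computation and its base-change bookkeeping. One must prove the Skolem--Noether type statement that all algebra endomorphisms of $M_n$ are inner automorphisms, so that the representation functor is exactly the product of a central character with $\mathfrak G_n$ and $T_n(M_n(W))\simeq W\otimes P_n$; and one must keep track of the fact that the descent here is along the \emph{center} $Z\to W$ rather than the ground ring $F$, which is precisely what makes the identification $T_n(R)\otimes_ZW\simeq T_n(R_W)$ correct and legitimizes the final faithfully flat descent.
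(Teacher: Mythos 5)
Your proposal is correct, and it shares the paper's two main ingredients --- the identification $\Gamma_n(R)_{ab}\simeq Z$ via Theorem \ref{nazz}, and the split computation $T_n(M_n(A))\simeq P_n\otimes A$ with invariants cut out by $\Delta(u)=1\otimes u$ and recovered from the counit (the paper uses the antipode identity $m\circ S\otimes 1\circ\Delta=\epsilon$ to the same effect) --- but it organizes the descent differently. The paper never base-changes the whole diagram along $Z\to W$: instead it observes, via the central polynomial (Lemmas \ref{cenaz}, \ref{maaz}), that $\hom_{\mathcal Z}(R,M_n(B))$ is the set of splittings of $R$ over $B$, hence that $T_n(R)$ is a $PGL(n,-)$-torsor; the torsor identity $1\otimes m\circ\eta\otimes 1:T_n(R)\otimes T_n(R)\simto P_n\otimes T_n(R)$ then converts the invariance condition $\eta(u)=1\otimes u$ into $u\otimes 1=1\otimes u$, and descent along the faithfully flat $Z\to T_n(R)$ itself gives $u\in Z$. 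Your route --- establish $T_n(R)\otimes_ZW\simeq T_n(R_W)$ compatibly with the coaction, note that the invariants are the kernel of a $Z$-linear map and so commute with the flat extension $Z\to W$, compute the split case, and descend along $Z\to W$ --- is a legitimate alternative; it trades the torsor formalism for the base-change bookkeeping you flag, and it still needs the same central-polynomial input (to know that representations of $R_W$ carry $W$ into the center, which is what makes $T_n(R_W)=W\otimes_ZT_n(R)$ true). One small imprecision: what you need in the split case is only that unital $A$-algebra endomorphisms of $M_n(A)$ are automorphisms, not that they are \emph{inner}; over a general commutative ring not every automorphism of $M_n(A)$ is inner (the obstruction lies in the Picard group), which is precisely why the paper takes $\mathfrak G_n(A)=Aut_A(M_n(A))$, represented by $P_n$, rather than the image of $GL(n,A)$. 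Since your argument only ever uses $\mathfrak G_n$, this does not affect its validity.
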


Assume first that $R=M_n(A)$. A map  $M_n(A)\to M_n(B)$ consists of a morphism $f:A\to B$ and then an automorphism   $g:B\otimes_ZM_n(A)\to  M_n(B)$, this functor is thus classified by $T_n(M_n(A))=P_n\otimes A$  and we have the universal map $j:M_n(A)\to M_n(P_n\otimes A)\simeq P_n\otimes  M_n(A)$. The map $j$ is also given by Formula \eqref{cogm}:

 \begin{equation}\label{cogm1}
a\mapsto XaX^{-1}\in M_n(P_n\otimes A),\ a\in M_n(A).
\end{equation}
The condition for an element    $u\in T_n(M_n(A))=P_n\otimes A$ to be invariant is from Formula \eqref{finv}:
$$\Delta (u)=1\otimes u,\ \Delta:P_n\otimes A\to P_n\otimes P_n\otimes A ,\quad\text{comultiplication}$$
By the Hopf algebra properties if $S$ is the antipode we have $m\circ S\otimes 1\circ\Delta =\epsilon$ which in  group terms just means $f(x^{-1}x)=f(1)$
$$\implies m\circ S\otimes 1\circ  \Delta (u)=m\circ S\otimes 1(1\otimes u)=m(1\otimes u)=u=\epsilon(u)\in A.$$
 In order to analyze the universal map $R\to M_n(T_n(R)) $  for an Azumaya algebra $R$ we use a fact from the Theory of polynomial identities.
 
 One knows \cite{agpr}  Theorem 10.2.19, that there is a multilinear non commutative polynomial  $\phi(x_1,\ldots,x_k)$ with coefficients in $\mathbb Z$ which, when evaluated in matrices $M_n(A)$  over any commutative ring $A$  does not vanish and takes values in the center $A$. From this it follows that:
 \begin{lemma}\label{cenaz}
When we evaluate $\phi$ in any  Azumaya algebras $R$  of rank $n^2$ over its center $Z$  its values lie in $Z$  and every element of $Z$  is obtained as a sum of evaluations of $\phi$. 
\end{lemma}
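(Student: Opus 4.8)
The plan is to reduce both assertions to the matrix case $M_n$ via the faithfully flat descent built into Definition \ref{azza}, treating separately the claim that the values are central and the claim that they exhaust $Z$.

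For the first assertion I would exploit that $\phi$ is a \emph{central} polynomial: the defining property that $\phi$ takes values in the center of $M_n(A)$ for every commutative $A$ says precisely that the commutator $[\phi(x_1,\ldots,x_k),x_{k+1}]$ is a polynomial identity of $n\times n$ matrices with coefficients in $\ZZ$. Since a rank $n^2$ Azumaya algebra $R$ over $Z$ is an $n$--Cayley--Hamilton algebra over $Z$ (with the reduced norm), Corollary \ref{frch1} guarantees that $R$ satisfies every polynomial identity of $n\times n$ matrices, in particular this one. Hence $[\phi(r_1,\ldots,r_k),r]=0$ for all $r_i,r\in R$, so every value $\phi(r_1,\ldots,r_k)$ lies in $Z$. (Alternatively one argues by descent: after base change to $W$ the value becomes central in $M_n(W)$, so $\mathrm{ad}_{\phi(r_1,\ldots,r_k)}$ vanishes after a faithfully flat extension, hence vanishes already over $Z$.)

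For the surjectivity I would first settle the matrix case over $\ZZ$. The sums of evaluations of $\phi$ over $M_n(\ZZ)$ form an ideal of $\ZZ$, because $\phi$ is multilinear with integer coefficients (scaling one argument by $c\in\ZZ$ scales the value by $c$). This ideal is nonzero, since $\phi$ does not vanish over $\QQ$, and it is contained in no $p\ZZ$, since $\phi$ does not vanish over $\mathbb F_p$; therefore it is all of $\ZZ$, and $1$ is a sum of evaluations of $\phi$ on matrices in $M_n(\ZZ)$. Base changing along $\ZZ\to A$ shows that $1_A$, and then by scaling every element of $A$, is a sum of evaluations over $M_n(A)$, for every commutative ring $A$.

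The remaining, and only genuinely delicate, step is to transport this surjectivity through descent. Let $J\subseteq Z$ be the ideal of sums of evaluations of $\phi$ in $R$, and pick $Z\to W$ faithfully flat with $W\otimes_Z R\simeq M_n(W)$. Because $\phi$ is multilinear with integer coefficients and $W$ is central and commutative, one has $\phi(w_1\otimes r_1,\ldots,w_k\otimes r_k)=(w_1\cdots w_k)\otimes\phi(r_1,\ldots,r_k)$; expanding arbitrary arguments $W$--linearly then shows that the sums of evaluations of $\phi$ in $W\otimes_Z R$ are exactly the image of $W\otimes_Z J$ inside $W=Z(M_n(W))$. By the matrix case over $W$ these sums of evaluations fill all of $W$, whence $W\otimes_Z J=W$, i.e. $W\otimes_Z(Z/J)=0$, and faithful flatness of $Z\to W$ forces $Z/J=0$, that is $J=Z$. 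The crux is precisely the identity $\phi(w_1\otimes r_1,\ldots,w_k\otimes r_k)=(w_1\cdots w_k)\otimes\phi(r_1,\ldots,r_k)$, which makes the module of evaluations compatible with flat base change; everything else is routine bookkeeping with faithful flatness.
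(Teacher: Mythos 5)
Your argument is correct, and it rests on the same two pillars as the paper's: multilinearity of $\phi$ (so that evaluations are compatible with base change along $Z\to W$) and faithful flatness of the splitting extension. The organization of the surjectivity step is genuinely different, though. The paper never proves that the sums of evaluations fill the center of $M_n(A)$; it only needs that $\phi$ does \emph{not vanish} on any rank $n^2$ Azumaya algebra with nonzero center (which follows by the same multilinearity-plus-splitting observation), and then gets $J=Z$ in one line by noting that if $J\neq Z$ then $R/JR$ is again Azumaya of rank $n^2$ over $Z/J\neq 0$ while every evaluation of $\phi$ on $R/JR$ is the image of an evaluation on $R$, hence lands in $J$ and dies in $Z/J$ --- a contradiction. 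You instead prove the stronger matrix statement first (the arithmetic argument over $\mathbb Z$, using that $\phi$ is nonzero on $M_n(\mathbb F_p)$ for every $p$ and on $M_n(\mathbb Q)$, then base change), and then descend the equality $JW=W$ via $W\otimes_Z(Z/J)=0$. Both are valid; the paper's reduction is shorter because quotienting by $J$ lets the non-vanishing statement do all the work, while your route has the minor merit of making explicit the identity $\phi(w_1\otimes r_1,\ldots,w_k\otimes r_k)=(w_1\cdots w_k)\otimes\phi(r_1,\ldots,r_k)$ that both proofs silently rely on. One small remark on your first assertion: invoking Corollary \ref{frch1} to see that $R$ satisfies the identity $[\phi(x_1,\ldots,x_k),x_{k+1}]$ is heavier than necessary (and that corollary is stated for a base that is $\mathbb Z$ or a field); your parenthetical descent argument --- $\mathrm{ad}_{\phi(r_1,\ldots,r_k)}$ vanishes after a faithfully flat extension, hence vanishes --- is the one the paper uses and works over an arbitrary center $Z$.
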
\begin{proof}
In fact from the faithfully flat splitting $W\otimes_ZR\simeq M_n(W)$ it follows that $\phi$ takes values in $Z$ and it does not vanish on $R$  otherwise it would vanish on $M_n(W)$. Since $\phi$ is multilinear the sums of evaluations of $\phi$ form an ideal $J$ of $Z$  and, if $J\neq Z$, then $\phi$  vanishes on  the Azumaya algebras $R/JR$  of rank $n^2$ over its center $Z/J $ a contradiction.
\end{proof} 
 \begin{lemma}\label{maaz}
If $f:R_1\to R_2$ is a ring homomorphism of two   rank $n^2$ Azumaya algebras  with centers $Z_1, Z_2$  then $f(Z_1)\subset Z_2,\ R_2\simeq Z_2\otimes_{Z_1}R_1$.
\end{lemma}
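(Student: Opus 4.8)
The plan is to prove the two assertions in turn: the inclusion $f(Z_1)\subset Z_2$ via the central polynomial of Lemma~\ref{cenaz}, and then the isomorphism by reducing to the general fact that a center-fixing homomorphism between Azumaya algebras of equal rank is automatically invertible.

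First I would establish $f(Z_1)\subset Z_2$. Let $\phi(x_1,\ldots,x_k)$ be the multilinear polynomial with coefficients in $\mathbb Z$ from Lemma~\ref{cenaz}. By that lemma every $z\in Z_1$ can be written as a finite sum $z=\sum_j\phi(a_{j,1},\ldots,a_{j,k})$ with $a_{j,i}\in R_1$. Since $f$ is a ring homomorphism and $\phi$ has integer coefficients, applying $f$ yields $f(z)=\sum_j\phi(f(a_{j,1}),\ldots,f(a_{j,k}))$, a sum of evaluations of $\phi$ on the rank $n^2$ Azumaya algebra $R_2$. By Lemma~\ref{cenaz} applied to $R_2$, each such evaluation lies in $Z_2$, so $f(z)\in Z_2$. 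This makes $Z_2$ a $Z_1$-algebra through $f|_{Z_1}$, so that $Z_2\otimes_{Z_1}R_1$ is defined, and it is again an Azumaya algebra of rank $n^2$, now with center $Z_2$, because the Azumaya property is stable under the base change $Z_1\to Z_2$.

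Next I would produce the comparison map. Because every element of $Z_2$ is central in $R_2$, the assignment $z\otimes r\mapsto z\,f(r)$ is $Z_1$-balanced and defines a $Z_2$-algebra homomorphism $\mu:Z_2\otimes_{Z_1}R_1\to R_2$; multiplicativity uses the centrality of $Z_2$ to commute scalars past the factors $f(r)$. Thus $\mu$ is a $Z_2$-algebra map between two Azumaya algebras of rank $n^2$ over $Z_2$, and the whole problem reduces to showing that any such $\mu$ is an isomorphism.

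The hard part will be this last step, and here I would argue locally. For each prime $\mathfrak p\subset Z_2$, base change to the residue field $k(\mathfrak p)$ turns both algebras into central simple $k(\mathfrak p)$-algebras of dimension $n^2$; a central simple algebra is simple and $\mu\otimes k(\mathfrak p)$ sends $1$ to $1$, so its kernel is a proper two-sided ideal, hence zero, and $\mu\otimes k(\mathfrak p)$ is injective, thus bijective by equality of dimensions. By right-exactness of $-\otimes_{Z_2}k(\mathfrak p)$, the cokernel $C$ of $\mu$ satisfies $C\otimes_{Z_2}k(\mathfrak p)=0$ for every $\mathfrak p$; since $C$ is a finitely generated $Z_2$-module, Nakayama forces $C=0$, so $\mu$ is surjective. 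For injectivity I would observe that $\ker\mu$ is a two-sided ideal of the Azumaya algebra $Z_2\otimes_{Z_1}R_1$, hence by the ideal correspondence for Azumaya algebras of the form $(\ker\mu\cap Z_2)\cdot(Z_2\otimes_{Z_1}R_1)$; but $\mu$ restricts to the identity on the central copy of $Z_2$, so $\ker\mu\cap Z_2=0$ and therefore $\ker\mu=0$. Hence $\mu$ is an isomorphism, giving $R_2\simeq Z_2\otimes_{Z_1}R_1$. The main technical inputs are the structure theory of Azumaya algebras (stability under base change, the two-sided ideal correspondence, and the central-simple fibres) together with the central polynomial of Lemma~\ref{cenaz}.
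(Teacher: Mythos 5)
Your proof is correct, and for the containment $f(Z_1)\subset Z_2$ it is exactly the paper's argument: the paper's entire proof of this lemma consists of the single observation that $a\in Z_1$ is a sum of evaluations $\phi(a_1,\ldots,a_k)$, so $f(a)$ is a sum of evaluations $\phi(f(a_1),\ldots,f(a_k))\in Z_2$, which is your first paragraph verbatim in spirit. Where you genuinely diverge is that the paper stops there and never proves the second assertion $R_2\simeq Z_2\otimes_{Z_1}R_1$ at all, evidently treating it as standard Azumaya-algebra folklore (Auslander--Goldman), whereas you supply a complete argument: base change preserves the Azumaya property and the rank, the natural map $\mu:z\otimes r\mapsto z\,f(r)$ is a unital $Z_2$-algebra map, surjectivity follows from Nakayama after checking that each fibre $\mu\otimes k(\mathfrak p)$ is a unital map between central simple algebras of equal dimension (hence bijective), and injectivity follows from the two-sided ideal correspondence for Azumaya algebras together with $\ker\mu\cap Z_2=0$. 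This is a sound and self-contained way to close the gap; its only cost is reliance on the standard structure theory (finite generation and projectivity of $R_2$ over $Z_2$ for the Nakayama step, and the ideal correspondence), all of which are available in the references the paper cites. One could shorten your last step by noting that a surjection between finitely generated projective $Z_2$-modules of the same rank is automatically injective, but your route through the ideal correspondence is equally valid.
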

\begin{proof}
Given $a\in Z_1$  we have $a$ is a sum of evaluations $\phi(a_1,\ldots,a_k)$ for some elements $a_i\in R_1$ so $f(a)$ is a sum of evaluations $\phi(f(a_1),\ldots,f(a_k))\in Z_2.$

\end{proof}
 We have  that $T_n(R)$ classifies the functor  $\hom_{\mathcal R}(R, M_n(B))$ and    under such a map $f:R\to M_n(B)$ we have $f(Z)\subset B$ and $B\otimes_ZR\simeq M_n(B)$ which is an isomorphism.   We may, to begin with,   assume that instead of working in the category of   rings we work in that of   $Z$ algebras $\mathcal Z$.  Then $\hom_{\mathcal Z}(R, M_n(B))$ is the set of isomorphisms $g:B\otimes_ZR\simeq M_n(B)$.
 
 This set, for a given $B$,  may be empty  but when it is non empty we say that $B$ {\em splits} $R$  and  on it the group  $PGL(n,-)$ acts in a simply transitive way  that is, by definition,  we have  that $T_n(R)$ is a {\em torsor}  for $PGL(n,-)$.
 \begin{definition}\label{tors}  Let $\mathcal C$  be a category and $G(-), F(-)$  be respectively a covariant group valued and a set valued functor, together with a group action $\mu:G(-)\times F(-)\to F(-)$.
 
 We say that $F$ is a {\em torsor} over $G$ if for all $X\in \mathcal C$  given $x,y\in F(X)$  there is a unique element $g\in G(X)$ with $gy=x$.
 
 If $G$, $F$  are represented by two objects $\mathtt G,\ \mathtt F$ and $\mathcal C$ has products we say that  $\mathtt F$ is a torsor over the group like object $ \mathtt G $  (cf. \cite{Mil}).

\end{definition} Notice again that when $F(X)=\emptyset$ the condition is void.

The previous condition can be conveniently reformulated as: 

{\em the natural transformation of functors:\begin{equation}\label{ntt}
\begin{CD}
G(X)\times F(X)@>1\times\delta>>G(X)\times F(X)\times F(X)@>\mu\times 1>> F(X)\times F(X)
\end{CD}
\end{equation} is an isomorphism (with $\delta$ the diagonal)}.\smallskip

 One knows that there is a faithfully flat extension $j:Z\to W$  so that $W\otimes_ZR\simeq M_n(W)$.  One deduces  for the universal object $T_n(R)$  that $$W\otimes_Z T_n(R) \simeq W\otimes P_n $$ and so that $T_n(R) $ is   faithfully flat  over $Z$. 
 
 From Formula \eqref{ntt}  the property of being a torsor can be stated in terms of the coaction of $P_n\otimes Z$  over $T_n(R)$, as in Formula \eqref{coac}: $$\eta:T_n(R)\to (P_n\otimes Z)\otimes_ZT_n(R)= P_n\otimes T_n(R).$$   Since $F$ and $G$ are controvariant functors     Formula \eqref{ntt}  gives a    dual formula for their representing objects. This  is the fact that the map $1\otimes m\circ \eta\otimes 1$ (where $m$ is the multiplication $m(a\otimes b)=ab$) is an isomorphism:
  $$1\otimes m\circ \eta\otimes 1 :T_n(R) \otimes T_n(R)\stackrel{\eta\otimes 1}\longrightarrow P_n \otimes T_n(R) \otimes T_n(R)\stackrel{1\otimes m}\longrightarrow P_n \otimes T_n(R).$$ This formula at the level of points, in any commutative $Z$--algebra $A$,       means exactly that given two points $(x,y)$ in $\hom(T_n(R),A)$  there is a unique $g\in \mathfrak G(A)$ with $x=gy,$ or $ (x,y)=(gy,y)$.

If $u\in T_n(R)$  we have 
$$ 1\otimes m\circ \eta\otimes 1(u\otimes 1)=1\otimes m\circ \eta (u)\otimes 1= \eta (u)$$
$$ 1\otimes m\circ \eta\otimes 1(1\otimes u)=1\otimes m\circ 1\otimes u=1\otimes u.$$
Since the map  $ 1\otimes m\circ \eta\otimes 1$ is an isomorphism,  the condition of invariance  $\eta (u)=1\otimes u$ is equivalent to $u\otimes 1 =1\otimes u$ which by faithfully flat descent is equivalent to $u\in   Z$. This proves Theorem \ref{inaz}.

\subsection{$\Sigma$--algebras}  We have  seen in Formula \eqref{silaw} the maps
$\sigma_i(f),\ A\langle X\rangle_+\to \mathbb S_A.
$ which  are    polynomial laws homogeneous of degree $i$ which commute with the action of the endomorphisms $T(X)$.  
We can view the operators  $\sigma_i$  as  homogeneous polynomial maps,  with respect to $  \mathbb S_A $, of degree $i$  of  $A\langle X\rangle_+\otimes \mathbb S_A $ to  the center $  \mathbb S_A $ which satisfy the Amitsur  identity \eqref{comAm}. Thus we may set the following definition:
\begin{definition}\label{sigal}
1)\quad A $\Sigma$ algebra $R$  is an algebra over a commutative ring $A$ equipped with  polynomial laws $\sigma_i:R\to R$ which satisfy:
 \begin{equation}\label{sigre}
[\sigma_i(a),b]=0,\  \sigma_i(\sigma_j(a)b)=\sigma_j(a)^i\sigma_i(b),\ \forall a,b\in _BR,\ \forall B\in\mathcal C_A.
\end{equation}
 \begin{equation}\label{fes1}
 \sigma_i(ab )= \sigma_i(ba),\quad  \sigma_i(a^j)\stackrel{\eqref{fes}}=P_{i,j}( \sigma_1(a),\ldots, \sigma_{i\cdot j}(a))\end{equation}  and that also satisfy   Amitsur's Formula \eqref{comAm}.

2)\quad The $\sigma$--algebra $\sigma(R)$ of $R$  is the algebra generated over  $A$  by the elements $\sigma_j(a),\ j\in\mathbb N, \ a\in R$.

It is a subalgebra  of the center of $R$  closed under the operations $\sigma_i$.
\end{definition}

\begin{definition}\label{siide}
An ideal $I\subset R$ in a $\Sigma$--algebra is a $\sigma$--ideal if it is closed under the maps $\sigma_i$.
\end{definition}
\begin{proposition}\label{sideq}
If $I$  is a $\sigma$--ideal of $R$  the maps $\sigma_i$ pass to the quotient $R/I$ which  is thus also a $\Sigma$--algebra.

 If $I$  is a $\sigma$--ideal of $R$ and $B$ is a commutative $A$ algebra, then    ${}_BI$  is a $\sigma$--ideal of ${}_BR.$ \end{proposition}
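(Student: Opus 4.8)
The plan is to extract one computation that powers both claims: for $b\in I$, every polarized form of $\sigma_i$ of positive degree in $b$ again lies in $I$. By Amitsur's Formula \eqref{comAm} such a form is a sum of products $\sigma_{j_1}(p_1)\cdots\sigma_{j_r}(p_r)$ with the $p_l$ Lyndon words in the arguments; a term of positive degree in $b$ must contain a word $p_l$ in which the letter $b$ occurs, and as an element of $R$ that word is a product having a factor $b\in I$, so $p_l\in I$. Since $I$ is $\sigma$-closed, $\sigma_{j_l}(p_l)\in I$; as the values of the $\sigma$'s are central by \eqref{sigre} and $I$ is two-sided, the whole product lies in $I$. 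This is the only place where the two hypotheses on $I$ (two-sided ideal, and $\sigma$-closed at the base $A$) are used.

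For the first assertion I would argue through representing objects. By Roby's theory the homogeneous degree-$i$ polynomial law $\sigma_i$ corresponds to an $A$-linear map $\tilde\sigma_i\colon\Gamma_i(R)\to R$ with $\sigma_i(r)=\tilde\sigma_i(r^{[i]})$, and $\pi\colon R\to R/I$ induces a surjection $\Gamma_i(\pi)\colon\Gamma_i(R)\twoheadrightarrow\Gamma_i(R/I)$ whose kernel is $\sum_{e=1}^{i}\Gamma_e(I)\cdot\Gamma_{i-e}(R)$ (standard behaviour of divided powers on $0\to I\to R\to R/I\to 0$; see \cite{Roby},\cite{depr0}). Thus $\pi\circ\sigma_i$ descends to a polynomial law $\bar\sigma_i$ on $R/I$ exactly when $\pi\circ\tilde\sigma_i$ kills this kernel. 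Now the addition formula $\gamma_i(\sum_k x_k)=\sum_{\sum e_k=i}\prod_k\gamma_{e_k}(x_k)$ together with $\gamma_e(sx)=s^e\gamma_e(x)$ identifies $\tilde\sigma_i$ on a product of divided powers with a polarized form of $\sigma_i$ in the corresponding arguments. Hence on a spanning element $\gamma_{e_1}(c_1)\cdots\gamma_{e_s}(c_s)\,\gamma_{f_1}(a_1)\cdots\gamma_{f_t}(a_t)$ of the kernel, with $c_j\in I$ and $\sum_j e_j\ge 1$, the map $\tilde\sigma_i$ returns a polarized form of positive degree in an argument from $I$, which lies in $I$ by the first paragraph. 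Therefore $\bar\sigma_i$ exists, and the relations \eqref{sigre}, \eqref{fes1} and \eqref{comAm} pass to the quotient since $\pi$ is a surjective $A$-algebra map with $\pi\circ\sigma_i=\bar\sigma_i\circ\pi$; this last verification is routine.

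For the second assertion, note that ${}_BI=B\otimes_AI$ is a two-sided ideal of ${}_BR$, so only $\sigma$-closure needs checking. An element of ${}_BI$ is a $B$-combination $\sum_k b_k\,(1\otimes c_k)$ with $c_k\in I$, so by Amitsur's Formula over $B$ the value $\sigma_i\!\left(\sum_k b_k(1\otimes c_k)\right)$ is a $B$-combination of products $\sigma_{j_1}(p_1)\cdots\sigma_{j_r}(p_r)$ with the $p_l$ Lyndon words in the elements $1\otimes c_k$. Every nonzero term has total degree $i\ge 1$, so some $p_l$ is a nonempty such word; the key observation is that a word in the rank-one tensors $1\otimes c_k$ is again rank-one, $p_l=1\otimes c$ with $c\in I$, whence naturality of the polynomial law $\sigma_{j_l}$ under $A\to B$ gives $\sigma_{j_l}(p_l)=1\otimes\sigma_{j_l}(c)\in 1\otimes I\subseteq{}_BI$, using only $\sigma$-closure of $I$ at the base $A$. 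Since ${}_BI$ is an ideal and the remaining factors are central, the whole term, hence $\sigma_i(\sum_k b_k(1\otimes c_k))$, lies in ${}_BI$.

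The main obstacle, and what makes the statement nontrivial, is precisely this base-change step: for a general polynomial law $f$ the condition $f(I)\subseteq I$ at the base $A$ does \emph{not} force $f_B({}_BI)\subseteq{}_BI$, since $f$ is not additive and ${}_BI$ contains genuine $B$-combinations. What rescues the argument is the structure coming from \eqref{comAm}: the nonlinear part of $\sigma_i$ is organized through Lyndon words, and a word in rank-one tensors stays rank-one, so the problem collapses to the base-$A$ hypothesis via naturality, with no circular appeal to $\sigma$-closure of ${}_BI$. The analogous pivot in the first assertion is the explicit description of $\ker\Gamma_i(\pi)$ together with the identification of $\tilde\sigma_i$ on products of divided powers with the polarized forms of $\sigma_i$.
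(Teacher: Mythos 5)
Your proof is correct and rests on the same engine as the paper's (two-line) proof: Amitsur's Formula \eqref{comAm} together with the centrality of the $\sigma_j$'s and the two-sided ideal property, which is exactly what makes every polarized form of positive degree in an element of $I$ land back in $I$. The extra scaffolding you supply --- the description of $\ker\Gamma_i(\pi)$ for the descent step and the rank-one-tensor observation for the base-change claim --- is a careful spelling out of what the paper leaves implicit rather than a genuinely different route.
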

\begin{proof}
Given $a\in R$ and $b\in I$ we need to see that $\sigma_i(a+b)-\sigma_i(a)\in I$. This follows  from Amitsur's Formula.

The second part also follows from Amitsur's Formula.\end{proof}
\begin{remark}\label{spl} Since the $\sigma_i$ are polynomial laws  the previous statements extend to all $_BR$.

\end{remark}
Then  $\Sigma$--algebras also form a category, where $\hom_\Sigma (R,S)$  denotes the set of homomorphisms $f:R\to S$ commuting with the operations $\sigma_i$, i.e. $f(\sigma_j(r) ) =\sigma_j(f(r)),\ \forall j\in\mathbb N,\ \forall r\in R$. 

The kernel of a $\sigma$--homomorphism is a  $\sigma$--ideal  and the usual homomorphism theorem holds.

This category   has free algebras namely $A\langle X\rangle_+\otimes \mathbb S_A $ if we do not consider algebras with 1 or $A\langle X\rangle \otimes \mathbb S_A[\sigma_i(1)] $ by declaring the elements $\sigma_i(1)$ to be independent variables. 
\begin{definition}\label{sigid} A  $\Sigma$--identity for a  $\Sigma$--algebra $R$ is an element of the free algebra $f\in A\langle X\rangle \otimes \mathbb S_A$ which vanishes under all evaluations of $X$ in $R$.

\end{definition}  By abuse of notations we  denote by  $ \mathbb S_A $  the algebra  $ \mathbb S_A [\sigma_i(1)] ,\ i=1,\ldots.$\quad 
As in the Theory of polynomial identities one has then the notions of $T$--ideal of $A\langle X\rangle \otimes \mathbb S_A$, of variety of  $\Sigma$--algebras and of $\Sigma$ or PI equivalence of $\Sigma$--algebras.
 \begin{remark}\label{uapl}
1)\quad In this language  one defines a  formal Cayley Hamilton polynomial in the free algebra by $CH_n(x):=x^n+\sum_{i=1}^n(-1)^i\sigma_i(x)x^{n-i}$. 

Then an $n$--  Cayley Hamilton $A$--algebra $R$ can be also defined as a $\Sigma$ algebra satisfying the following conditions, which we will refer to as
\begin{definition}\label{tchc}
{\em three $CH_n$ conditions}:

1) $\sigma_i(x)=0,\ \forall i>n$, 2) $CH_n(x)=0,\ \forall i>n,\ \forall x\in {}_B R$, and $B$ any commutative $A$--algebra, and 3) $\sigma_i(1)=\binom ni, \forall i\leq n$. 
\end{definition} 
 \end{remark} 
\begin{remark}\label{sden}
It then follows from the Theorem of Zubkov and Zieplies that  $\sigma_n$ is a norm and  that $CH_n(x)$ is the evaluation for $t=x$ of $\sigma_n(t-x)$, see \cite{depr} and also \cite{Pr8}.

 \quad  In particular all the maps  $\sigma_i(x)$ are {\em deduced} from $\sigma_n( x)$ via the formulas $\sigma_n(t-x)=t^n+\sum_{i=1}^n(-1)^i\sigma_i(x)t^{n-i}  $ and $\sigma_i(x)=0,\ \forall i>n.$
\end{remark}

\subsubsection{The  first and second fundamental Theorem for matrix invariants {\em revisited}}
The first and second fundamental Theorem for matrix invariants for algebras  may be viewed as the starting point of the Theory of Cayley---Hamilton algebras, in all characteristics.  It is Theorem \ref{DoZu} which can be interpreted best in the language of $\Sigma$--algebras.
Let   $F=\mathbb Z$, or a field:
\begin{theorem}\label{SFT}
 The algebra $F_{\Sigma,n}\langle X\rangle$ of equivariant polynomial maps from $X$--tuples of $n\times n$ matrices, $M_n(F)^X$  to  $n\times n$ matrices $M_n(F)$,  is the free  $\Sigma$--algebra  $F\langle X\rangle \otimes \mathbb S_A$  modulo the $T$--ideal generated by the  three $CH_n$ conditions of Definition \ref{tchc}
  \begin{equation}\label{ffs}
{\boxed{ F_{\Sigma,n}\langle X\rangle:=F\langle X\rangle \otimes \mathbb S_A/\langle CH_n(x),\ \sigma_i(x)=0,\ \forall i>n,\ \sigma_i(1)=\binom ni  .} } 
\end{equation}

\end{theorem}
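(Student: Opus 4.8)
The plan is to build the tautological comparison map from the free $\Sigma$-algebra into the algebra of equivariant maps, check that it annihilates the three $CH_n$ conditions, and then invoke the two fundamental theorems encoded in Theorem~\ref{DoZu} to see that the induced map on the quotient is an isomorphism. By the discussion in \S\ref{ffch} the algebra $F_{\Sigma,n}\langle X\rangle$ of equivariant polynomial maps is by definition $M_n(F[\xi_{h,k}^{(i)}])^{PGL}$, it contains the generic matrices $\xi_i$, and $F[\xi_{h,k}^{(i)}]^{PGL}$ is a central subalgebra of it. The universal property of the free $\Sigma$-algebra then yields a $\Sigma$-homomorphism
$$\Phi\colon F\langle X\rangle\otimes\mathbb S_A\longrightarrow M_n(F[\xi_{h,k}^{(i)}])^{PGL},\qquad x_i\mapsto\xi_i,$$
sending each $\sigma_j$ to the $j$-th coefficient of the characteristic polynomial; concretely $\Phi$ is the map $m\circ(\tau_n\otimes\rho_n)$ of \eqref{eqac}.

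First I would check that the three $CH_n$ conditions lie in $\ker\Phi$, so that $\Phi$ descends to the quotient $Q:=(F\langle X\rangle\otimes\mathbb S_A)/J$ by the $T$-ideal $J$ they generate. Each is a genuine identity of $n\times n$ matrices propagated by polynomiality: the generic matrix satisfies its characteristic polynomial, so $CH_n(\xi_i)=0$; the characteristic polynomial has degree $n$, so $\sigma_i(\xi)=0$ for $i>n$; and $\det(t-\mathbf 1)=(t-1)^n$ forces $\sigma_i(1)=\binom ni$. Because the $\sigma_i$ are polynomial laws, Amitsur's Formula \eqref{comAm} carries these from monomials to all of $F\langle X\rangle\otimes\mathbb S_A$, so $J\subseteq\ker\Phi$ and we obtain $\bar\Phi\colon Q\to M_n(F[\xi_{h,k}^{(i)}])^{PGL}$.

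It remains to prove $\bar\Phi$ is an isomorphism. Surjectivity is the first fundamental theorem: by Theorem~\ref{DoZu}(3) (equivalently Remark~\ref{sftg}(2)) the equivariant maps are generated by the $\xi_i$ over the invariants, and the invariants are spanned by the $\sigma_j(M)$, all of which lie in the image of $\Phi$. For injectivity I would dismantle $J$ in two stages. Imposing $\sigma_i(x)=0$ for $i>n$ together with the normalization $\sigma_i(1)=\binom ni$ collapses the factor $\mathbb S_A$ onto its level-$n$ quotient $\mathbb S_{n,A}=\Gamma_n(F\langle X\rangle)_{ab}$ by Theorem~\ref{quo}, which Theorem~\ref{DoZu}(1) identifies with $F[\xi_{h,k}^{(i)}]^{PGL}$ through $\bar N$; at this stage $Q$ has become a quotient of $F\langle X\rangle\otimes F[\xi_{h,k}^{(i)}]^{PGL}$. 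Imposing $CH_n(x)=0$ then cuts out precisely the kernel $K$ of Theorem~\ref{DoZu}(3), so $\bar\Phi$ is the resulting canonical isomorphism. Equivalently and more conceptually, Remark~\ref{uapl} identifies $Q$ with the free $n$--Cayley--Hamilton algebra on $X$ in the $\Sigma$-formulation, and Corollary~\ref{frch} identifies that free object with $M_n(F[\xi_{h,k}^{(i)}])^{PGL}$; uniqueness of universal objects then forces $\bar\Phi$ to be an isomorphism.

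The hard part is injectivity, and it is hard for a structural reason: it is exactly the second fundamental theorem for matrix invariants and cannot be obtained by elementary manipulation, resting instead on the characteristic-free invariant theory of Donkin and Zubkov (and Zieplies' polynomial description of $\mathbb S_{n,A}$) that underlies Theorem~\ref{DoZu}. Granting that input, the only fresh bookkeeping is to confirm that $J$, after polarization through \eqref{comAm}, matches on the nose the two ideals of Theorems~\ref{quo} and~\ref{DoZu}(3)---in particular that condition 3 normalizes the constant terms $\sigma_i(1)$ so that the passage $\mathbb S_A\to\mathbb S_{n,A}$ is compatible with units.
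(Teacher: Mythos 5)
Your proposal is correct and follows essentially the same route as the paper, which presents Theorem~\ref{SFT} as the translation of Theorem~\ref{DoZu} (resting on Donkin--Zubkov via \cite{depr0}) into the language of $\Sigma$--algebras, using Theorem~\ref{quo} to pass from $\mathbb S_A$ to $\mathbb S_{n,A}$; the paper itself supplies no further argument beyond this reduction. You have simply made explicit the comparison map and the bookkeeping that the paper leaves implicit.
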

  To be concrete  if $X$ has $\ell$  elements, let $A_{\ell,n}$  denote  the polynomial functions on the space $M_n(F)^\ell $ (that is the algebra of polynomials over $F$ in $mn^2$ variables $\xi_{i,(j,h)},\ i=1,\ldots m;\ j,h=1,\ldots,n $). 

On this space, and hence on  $A_{\ell,n}$, acts the group $PGL(n,-)$  by conjugation.

The space of polynomial maps from $M_n(F)^\ell $ to $M_n(F)$ is
$$M_n(A_{\ell,n})=M_n(F)\otimes A_{\ell,n}.$$  This is a $\Sigma$--algebra in an obvious way, and  on this space acts diagonally $PGL(n,-)$  commuting with the $\Sigma$--operators and the invariants  
 $$F_{\Sigma,n}\langle x_1,\ldots,x_\ell \rangle =M_n(A_{\ell,n})^{PGL(n,-)}=(M_n(F)\otimes A_{\ell,n})^{PGL(n,-)}$$
are a  $\Sigma$--subalgebra which is  the relatively free algebra   in $\ell$ variables in the variety of $\Sigma$--algebras satisfying the three $CH_n$ conditions.

For the  $\sigma$--algebra of $F_{\Sigma,n}\langle X\rangle$ we have $T_n(\ell)=A_{\ell,n} ^{PGL(n,-)}$.  Of course we may let $\ell$ be also infinity (of any type)  and have $$F_{\Sigma,n}\langle X\rangle =M_n(A_{X,n})^{PGL(n,-)}=(M_n(F)\otimes A_{X,n})^{PGL(n,-)}$$ where $A_{X,n}$ is the polynomial ring on $M_n(F)^X.$
\begin{remark}\label{eqcc}  If $F$ is  an infinite field  one may take for  $PGL(n,-)$ the actual group $PGL(n,F)$, otherwise one has two options. The first option is to take  for  $PGL(n,-)$ the   group $PGL(n,G)$ for $G$ any infinite field containing $F$  or take the categorical notion, valid also over $\mathbb Z$, taking the coaction $\eta:A_{X,n}\otimes M_n(F)\to  P_n\otimes  A_{X,n}\otimes M_n(F)$ (with $P_n$ the coordinate ring of  $PGL(n,-)$)  set
$$(A_{X,n}\otimes M_n(F))^{PGL(n,-)}:=\{r\in (A_{X,n}\otimes M_n(F)) \mid \eta(r)=1\otimes r\}. $$
By the universal properties one sees that $\eta(r)=1\otimes r$ is equivalent to $g(r)=1\otimes r,\ \forall g\in PGL(n,B),\ \forall B.$\smallskip

The reader will understand at this point that the approach with trace and that with norm, in characteristic 0, are equivalent.
\end{remark}
For a proof of these Theorems   in all characteristics or even  $\mathbb Z$--algebras,  the Theorem of Zubkov,  the reader may consult \cite{depr0}.
\bigskip

One can then reformulate the definition of $n$--Cayley--Hamilton  algebra in this language:
\begin{definition}\label{CHalg}
A $\Sigma$--algebra      satisfying the three $CH_n$ conditions  \begin{equation}\label{tcn}
\langle CH_n(x),\ \sigma_i(x)=0,\ \forall i>n,\ \sigma_i(1)=\binom ni \rangle 
\end{equation} will be called an {\em $n$--Cayley--Hamilton  algebra} or {\em $n$--CH algebra}.  
\end{definition} In other words an $n$--Cayley--Hamilton  algebra $R$ is a quotient, as $\Sigma$--algebra, of one free algebra $F_{\Sigma,n}\langle X \rangle$.

For $n=1$  a  $1$--CH algebra is just a  commutative algebra in which the norm is the identity map or $\sigma_i(a)=0,\ \forall i>1$ and $\sigma_1(a)=a$. Therefore the Theory of 
$n$--Cayley--Hamilton algebras may be viewed as a generalization of Commutative algebra.\begin{remark}\label{PIn0}
For  an $n$--Cayley--Hamilton algebra $R$ the map $\sigma_n$ is a norm and, from Remark \ref{sden} it follows that the $\sigma$ algebra  $\sigma(R)$  coincides with its   {\em norm algebra}.\end{remark} \begin{remark}\label{PIn}
An   $n$--Cayley--Hamilton algebra $R$ satisfies all the polynomial identities of $M_n(\mathbb Z)$.
\end{remark}    

\subsubsection{The main Theorem of Cayley--Hamilton algebras}
 Let   $R$  be any  $n$--Cayley--Hamilton algebra over  a  commutative ring $F $. 
 
 Choosing a set of generators $X$ for $R$  we may present $R$ as a quotient of a free algebra $F_{\Sigma,n}\langle X\rangle=(M_n(F)\otimes A_{X,n})^{PGL(n,-)}$ modulo a $\Sigma$--ideal $I$.
  Now $(M_n(F)\otimes A_{X,n}) I  (M_n(F)\otimes A_{X,n})$ is an ideal of $   M_n(F)\otimes A_{X,n} =M_n(  A_{X,n}) $ so there is an ideal  $J\subset A_{X,n}$, which is   $PGL(n,-)$ stable  with 
 $$ (M_n(F)\otimes A_{X,n})I  (M_n(F)\otimes A_{X,n})=M_n(J)$$   from which  one has a commutative diagram:
 \begin{theorem}\label{immu}
We have a commutative diagram in which the first horizontal arrow $i$ is an isomorphism. The second horizontal arrows are both injective and the vertical maps surjective:
 \begin{equation}\label{lclcl}
\!\!\!\!\!\!\!\!\!\!\!\!\!\!\!\!\!\!\begin{CD}
F_{\Sigma,n}\langle X\rangle @>{i}>\cong>
 M_n[A_{X,n}]^{PGL(n,-)}@>>>M_n[A_{X,n}] \\
@VVV@VVV@VVV \\
R@>{i_R}> >
 M_n[A_{X,n}/J]^{PGL(n,-)}@>>>M_n[A_{X,n}/J] 
\end{CD}
\end{equation}         
If $F\supset \mathbb Q$  then $i_R$ is an {\bf isomorphism}, [Strong embedding Theorem].
\end{theorem}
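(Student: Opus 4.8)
The plan is to isolate the one point that genuinely needs characteristic $0$. Write $S:=F_{\Sigma,n}\langle X\rangle\cong M_n[A_{X,n}]^{PGL(n,-)}$ and $T:=M_n[A_{X,n}]$, abbreviate $G=PGL(n,-)$ so that $S=T^{G}$, and recall that $M_n(J)=TIT$ is the two--sided ideal of $T$ generated by the $\Sigma$--ideal $I$, with $R=S/I$. Taking the earlier assertions of the theorem (the isomorphism $i$, the injectivity of the two inclusions, and the surjectivity of the three vertical maps) as given, surjectivity of $i_R$ is automatic: by commutativity the surjection $(\text{middle vertical})\circ i$ equals $i_R$ composed with the quotient $S\twoheadrightarrow R$, so $i_R$ is onto. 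Hence \emph{only injectivity of $i_R$ remains}. The map $i_R$ is induced by the middle vertical $q\colon S\to M_n[A_{X,n}/J]^{G}$, which is the restriction to invariants of $S\hookrightarrow T\twoheadrightarrow T/M_n(J)$; therefore $\ker q=S\cap M_n(J)=M_n(J)^{G}$ and
\begin{equation*}
\ker i_R=M_n(J)^{G}/I .
\end{equation*}
Since $I\subseteq M_n(J)^{G}$ is clear, the whole theorem comes down to the reverse inclusion $M_n(J)^{G}\subseteq I$.

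The engine is linear reductivity. As $F\supset\mathbb Q$, the group $G$ is linearly reductive, so there is a Reynolds operator $\mathcal R\colon T\to T^{G}=S$: an $S$--bimodule projection with $\mathcal R|_S=\mathrm{id}$ and $\mathcal R(s_1ts_2)=s_1\mathcal R(t)s_2$ for $s_1,s_2\in S$, and $(-)^{G}$ is exact on rational modules. The ideal $M_n(J)=TIT$ is $G$--stable (because $I$ is $G$--fixed), so $\mathcal R$ maps it onto its invariants, giving $M_n(J)^{G}=\mathcal R(TIT)$. Thus it suffices to prove $\mathcal R(axb)\in I$ for all $a,b\in T$ and all $x\in I$.

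To do this I would identify the operator explicitly. For fixed $x$ the assignment $(a,b)\mapsto\mathcal R(axb)$ is $F$--bilinear, $G$--invariant, and $F$--linear in $x$; by the first fundamental theorem for matrices (Theorem \ref{FFT}) every such invariant is a universal \emph{trace} expression, a sum of terms each of which is a product of traces of words in $a,x,b$ times one further word, with the letter $x$ occurring exactly once. Invariance of the total expression forces the single $x$ in each term to be flanked, or traced against, invariant (trace--polynomial) factors lying in $S$, so each term has the shape $u x v$ or $\operatorname{tr}(ux)\,v$ with $u,v\in S$. Because $I$ is a $\Sigma$--ideal and $\operatorname{char}F=0$ — where by Newton's identities the operators $\sigma_i$ and the power traces generate one another (Remark \ref{vl}) — $I$ is two--sided and closed under trace, i.e.\ $uIv\subseteq I$ and $\operatorname{tr}(uI)\subseteq I$. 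Hence both shapes send $x\in I$ into $I$, whence $\mathcal R(axb)\in I$ and $M_n(J)^{G}\subseteq I$. With the reduction of the first paragraph this proves $i_R$ injective, hence an isomorphism.

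The decisive step, and the expected obstacle, is the reverse inclusion $M_n(J)^{G}\subseteq I$: concretely, the claim that the Reynolds operator of the generic matrix algebra carries the ideal $TIT$ back into $I$. Every ingredient used for it — the existence of $\mathcal R$, the exactness of invariants, and above all the description of equivariants as genuine trace polynomials (rather than merely admitting a good filtration) — is available only because $F\supset\mathbb Q$. This is exactly the mechanism that is missing in positive characteristic, in line with the Foreword's remark that the main theorem is not expected to survive there; one should therefore regard the trace--closedness of $I$ and the trace--polynomial normal form of $\mathcal R(axb)$ as the two places where a characteristic--$p$ argument would have to break down.
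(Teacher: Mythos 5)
Your argument is correct and is essentially the proof that the paper itself only cites (to \cite{P5} and to \cite{agpr}, Theorem 14.2.1): reduce everything to the inclusion $M_n(J)^{PGL(n,-)}\subseteq I$ and establish it by linear reductivity together with the Reynolds identities. The one step you assert rather than derive is the normal form $\mathcal R(axb)=\sum_i u_i x v_i+\sum_j \mathrm{tr}(c_j x)\,d_j$ with $u_i,v_i,c_j,d_j\in S$ --- since $a,b$ are not equivariant, one should apply the first fundamental theorem not to ``words in $a,x,b$'' but to the invariant element $\int_G g(a)\otimes g(b)\,dg$ of $M_n(F)^{\otimes 2}\otimes A_{X,n}$, whose spanning set (terms $f\cdot(w_1\otimes w_2)$ and terms built from $\sum_{i,j}e_{ij}\otimes e_{ji}$, with $w_1,w_2$ words in the generic matrices and $f$ invariant) yields exactly your two shapes under $u\otimes v\mapsto uxv$.
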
   The fact that in characteristic 0, $i_R$ is an isomorphism depends upon the fact that $GL(n)$,  in characteristic 0, is linearly reductive, and then the proof, see \cite{P5} or \cite{agpr} Theorem 14.2.1, of this Theorem  is based on the so called Reynold's identities.

In general the nature of $i_R$ is not known, we do not know if parts of this statement are true.  This is in my opinion the main open problem of the Theory.

\section{Prime and  simple Cayley Hamilton algebras\label{leCH}}

\subsection{General facts}
\subsubsection{The kernel and the radical}
 
\begin{definition}\label{idita}
\begin{enumerate}\item a  {\em simple} $\Sigma$ algebra is one with no proper $\sigma$ ideals, 
\item a  {\em prime} $\Sigma$ algebra is one in which if $I,J$ are two $\sigma$ ideals with $IJ=0$ then either $I=0$ or $J=0$.

\item Finally a   {\em semiprime} $\Sigma$ algebra is one in which if $I $  is an ideal  with $I^2=0$ then  $I=0$.
\end{enumerate}

\end{definition} Notice that prime implies semiprime.

\begin{definition}\label{ker}
1)\quad  Given a $\Sigma$--algebra $R$ the set
\begin{equation}\label{Ker}
K_R:=\{x\in R\mid \sigma_i(xy)=0,\ \forall y\in  R,\ \forall i\}
\end{equation} will be called the {\em kernel} of the $\Sigma$--algebra.

$R$ is called {\em nondegenerate} if $K_R=0$.\bigskip

2)\quad  The set
\begin{equation}\label{KeR}
\widetilde K_R:=\{x\in R\mid \sigma_i(x )\ \text{is nilpotent},\ \forall y\in  R,\ \forall i \}
\end{equation} will be called the {\em radical} of the $\Sigma$--algebra.

$R$ is called {\em regular} if $\widetilde K_R=0$.\medskip

 By Amitsur's Formula \eqref{comAm}   both $K_R$ and $\widetilde K_R $ are  $\sigma$--ideals of $R$.
 
 Moreover if $B$ is a commutative $A$ algebra   ${}_B  K_R\subset   K_{ {}_BR},\  {}_B\widetilde K_R\subset \widetilde K_{ {}_BR}.$\bigskip

If $I$ is a $\sigma$--ideal  in a $\Sigma$--algebra $R$ we set $K(I)\supset I$ (resp. $\tilde K(I)$)  to be the ideal such that $R/K(I)=K_{R/I}$ (resp. $R/\tilde K(I)=\widetilde K_{R/I}$).

We call $K(I)$  the {\em radical kernel} of $I$ and $\tilde K(I)$  the {\em nil kernel} of $I$, both $\sigma$--ideals.

\end{definition}
 \begin{lemma}\label{CHC} Let $R$ be a $n$--CH  algebra.
 An element  $r\in R$ is nilpotent if and only if all the $\sigma_i(r)$ are nilpotent.
\end{lemma}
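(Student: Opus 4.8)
The plan is to establish the two implications separately. Both rest on the Cayley--Hamilton relation
\[
CH_n(r)=r^n+\sum_{i=1}^n(-1)^i\sigma_i(r)\,r^{n-i}=0
\]
together with the multiplicativity of the norm $N=\sigma_n$, and on the fact that every $\sigma_i(r)$ is central and lies in the commutative algebra $\sigma(R)$ (by \eqref{sigre} and Definition \ref{sigal}). Recall also that $\sigma_i(r)=0$ for $i>n$ and that $N(1)=\sigma_n(1)=\binom nn=1$.

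\emph{If $r$ is nilpotent then every $\sigma_i(r)$ is nilpotent.} Here I would adjoin a central polynomial variable $u$ and work in $R[u]=\sigma(R)[u]\otimes_{\sigma(R)}R$. First I would record the identity
\[
N(1+ur)=\sum_{i=0}^n\sigma_i(r)\,u^i\qquad\text{in }\sigma(R)[u],
\]
which follows from the definition of the characteristic polynomial $N(t-r)=\sum_{i=0}^n(-1)^i\sigma_i(r)t^{n-i}$ and homogeneity of $N$: writing $1+ur=u\bigl(u^{-1}-(-r)\bigr)$ gives $N(1+ur)=u^nN\bigl(u^{-1}-(-r)\bigr)=u^n\sum_i(-1)^i\sigma_i(-r)u^{-(n-i)}$, and $\sigma_i(-r)=(-1)^i\sigma_i(r)$ collapses this to $\sum_i\sigma_i(r)u^i$. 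Now if $r^m=0$, then $ur$ is nilpotent in $R[u]$, so $1+ur$ is a unit with inverse $\sum_{k=0}^{m-1}(-ur)^k$. Since $N$ is multiplicative and $N(1)=1$, it carries units to units, so $N(1+ur)$ is a unit of $\sigma(R)[u]$. A one-variable polynomial over a commutative ring is a unit exactly when its constant term is a unit and all higher coefficients are nilpotent; applied to the displayed identity, this yields that $\sigma_1(r),\dots,\sigma_n(r)$ are all nilpotent.

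\emph{Converse.} Suppose every $\sigma_i(r)$ is nilpotent. Let $B\subseteq R$ be the commutative subalgebra generated by $r$ and the central elements $\sigma_1(r),\dots,\sigma_n(r)$, and let $I\subseteq B$ be the ideal they generate. Because $I$ is generated by the \emph{finitely many} nilpotents $\sigma_1(r),\dots,\sigma_n(r)$, a pigeonhole argument shows $I$ is nilpotent, say $I^M=0$. The Cayley--Hamilton relation rewrites $r^n=\sum_{i=1}^n(-1)^{i+1}\sigma_i(r)\,r^{n-i}$, and each summand lies in $I$; hence $r^n\in I$ and therefore $r^{nM}=(r^n)^M\in I^M=0$, so $r$ is nilpotent.

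The computations are short, and the step I would be most careful about is the first one: justifying that the norm, being a multiplicative polynomial law, genuinely extends to the base change $R[u]$ and stays multiplicative there (so that units map to units), and checking the displayed norm identity cleanly from the definition of $\chi_r$. The unit-polynomial criterion and the nilpotence of a finitely generated ideal of nilpotents are standard facts of commutative algebra and pose no real obstacle; the only subtlety worth flagging in the converse is that the finiteness of the generating set is exactly what is guaranteed by $\sigma_i(r)=0$ for $i>n$, without which the ideal generated by the (individually nilpotent) $\sigma_i(r)$ need not itself be nilpotent.
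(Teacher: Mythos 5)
Your proof is correct, and one half of it takes a genuinely different route from the paper. For the implication ``all $\sigma_i(r)$ nilpotent $\Rightarrow$ $r$ nilpotent'' you and the paper argue identically: the Cayley--Hamilton relation expresses $r^n$ in terms of the commuting nilpotents $\sigma_i(r)r^{n-i}$, so $r^n$ lies in a nilpotent ideal. For the harder implication ``$r$ nilpotent $\Rightarrow$ all $\sigma_i(r)$ nilpotent'' the paper uses the symbolic symmetric-function machinery: the $\sigma_i(r^k)$ satisfy the universal relations of the $e_i(x_1^k,\dots,x_N^k)$, and since $\mathbb Z[x_1,\dots,x_N]$ is integral over $\mathbb Z[e_1(X^k),\dots,e_N(X^k)]$, each $\sigma_i(r)$ satisfies a monic polynomial whose coefficients are polynomials without constant term in the $\sigma_j(r^k)$, which vanish when $r^k=0$. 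You instead base-change the norm to $\sigma(R)[u]$ (legitimate, since $N$ is a polynomial law and multiplicativity is required for all base extensions), compute $N(1+ur)=\sum_i\sigma_i(r)u^i$ \emph{via} homogeneity of the $\sigma_i$, note that $1+ur$ is a unit when $r$ is nilpotent so its norm is a unit of $\sigma(R)[u]$, and invoke the standard criterion that a unit polynomial has nilpotent higher coefficients. Your argument is shorter and avoids the integrality lemma entirely, at the cost of invoking the polynomial-law formalism for the base change and the Atiyah--Macdonald unit criterion; the paper's argument yields slightly more quantitative information (an explicit bound on the nilpotency degree of $\sigma_i(r)$ coming from the degree of the integral dependence). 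The only cosmetic point: your intermediate manipulation $1+ur=u\bigl(u^{-1}-(-r)\bigr)$ implicitly inverts $u$, so it should be read in $\sigma(R)[u,u^{-1}]$ before specializing back, or replaced by the direct computation $\sigma_i(ur)=u^i\sigma_i(r)$; this is not a gap.
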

 \begin{proof} In one direction  every element  $r\in R$  satisfies its  characteristic polynomial, if $\sigma_i(r)$ is nilpotent for all $i$ we have $r^n$ is a linear combination of the commuting nilpotent elements $\sigma_i(r)r^{n-i}$    hence the claim.  
  Assume now $r$ nilpotent.
The  elements $\sigma_i(r^k), i\cdot k\leq N$ satisfy the   relations of  the corresponding symmetric functions  $e_i(X^k):=e_i(x_1^k,x_2 ^k,\ldots, x_N ^k)$. 

Now, for each $k$  the polynomial ring  $\mathbb Z[x_1 ,x_2  ,\ldots, x_N ]$  is integral over the subring $\mathbb Z[e_1(X^k)  ,e_2(X^k)   ,\ldots, e_N(X^k) ]$ so  $\sigma_i(r)$  satisfies a monic polynomial  of some degree $\ell$  whose coefficients are polynomials in the elements $\sigma_j(r^k)$ (and  with 0 constant coefficient). If $r^k=0$ these coefficients are all 0 so $\sigma_i(r)^\ell=0.$\end{proof}
\begin{proposition}\label{nonde}\begin{enumerate}\item $K_R$ is the maximal $\sigma$--ideal $J$ where $\sigma(J)=0$. 

 \item If  $R$ is  an $n$--CH  algebra we have $ \widetilde K_R  $ is the maximal    ideal $I$ with the property that ${}_BI$ is nil for all $B$.

\item If  $R$ is  an $n$--CH  algebra $R/\widetilde K_R$ is regular., i.e  $\widetilde K_{R/\widetilde K_R}=0$. 

\item If $\sigma_i(a)$  is nilpotent, then $\sigma_i(a)\in \widetilde K_R.$
\item If  $R$ is  an $n$--CH  algebra, over some commutative ring $A$, and $I$ is a nil ideal of $R$  then ${}_BI$ is   is a nil ideal of ${}_BR$ for every commutative $B$ algebra. \end{enumerate}

 \end{proposition}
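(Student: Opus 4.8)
The plan is to prove the five statements in order of logical dependence rather than as listed, deducing the two formal assertions (1) and (4) directly from the axioms, proving the base-change statement (5) as the technical core, and obtaining (2) and (3) from these together with Lemma \ref{CHC}. For (1), first observe that $\sigma(K_R)=0$: putting $y=1$ in the defining condition of $K_R$ yields $\sigma_i(x)=0$ for every $x\in K_R$ and every $i$, and $K_R$ is already a $\sigma$--ideal. For maximality, if $J$ is a $\sigma$--ideal with $\sigma(J)=0$ and $x\in J$, then for each $y\in R$ the product $xy$ lies in $J$, so $\sigma_i(xy)=0$ for all $i$; hence $x\in K_R$ and $J\subseteq K_R$.

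For (4) I would invoke the $\Sigma$--axiom \eqref{sigre} in the form $\sigma_k(\sigma_i(a)\,y)=\sigma_i(a)^k\sigma_k(y)$. As $\sigma_i(a)$ is central and nilpotent by hypothesis, every power $\sigma_i(a)^k$ ($k\ge1$) is central and nilpotent, so $\sigma_i(a)^k\sigma_k(y)$ is nilpotent for all $k$ and all $y$; this is exactly the condition that places $\sigma_i(a)$ in $\widetilde K_R$.

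The core is (5). Since base change preserves the three $CH_n$ conditions, ${}_BR=B\otimes_AR$ is again $n$--CH, so by Lemma \ref{CHC} an element of ${}_BR$ is nilpotent precisely when all its $\sigma_i$ are nilpotent; it therefore suffices to show that every $\sigma_i(r)$ is nilpotent for an arbitrary $r=\sum_k b_k\otimes x_k\in{}_BI$. Expanding $\sigma_i\bigl(\sum_k b_k x_k\bigr)$ by Amitsur's Formula \eqref{comAm} with $n=i$ writes it as a $B$--polynomial whose coefficients are products $\sigma_{j_1}(p_1)\cdots\sigma_{j_r}(p_r)$ of $\sigma$--values of Lyndon words $p_s$ in the $x_k$, with $\sum_s j_s\ell(p_s)=i\ge1$. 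Every Lyndon word has positive length, so each $p_s$ evaluated in the $x_k\in I$ lies in the ideal $I$ and is nilpotent; since $\sum_s j_s\ell(p_s)\ge1$ forces some $j_s\ge1$, the corresponding factor $\sigma_{j_s}(p_s)$ is nilpotent by Lemma \ref{CHC}, and hence so is the whole product inside the commutative algebra $\sigma(R)$. Thus $\sigma_i(r)$ is a finite $B$--combination of nilpotent elements of the commutative ring $B\otimes_A\sigma(R)$, hence nilpotent; applying Lemma \ref{CHC} over $B$ shows $r$ is nilpotent, so ${}_BI$ is nil.

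Given (5), the remaining parts follow quickly. For (2): taking $y=1$ shows each $x\in\widetilde K_R$ has $\sigma_i(x)$ nilpotent, so $x$ is nilpotent by Lemma \ref{CHC}, i.e.\ $\widetilde K_R$ is a nil ideal; then (5) gives that ${}_B\widetilde K_R$ is nil for every $B$. Conversely, if an ideal $I$ satisfies ${}_BI$ nil for all $B$, then $I$ is nil, and for $x\in I$, $y\in R$ the element $xy\in I$ is nilpotent, whence $\sigma_i(xy)$ is nilpotent and $x\in\widetilde K_R$. For (3), if $\bar x\in\widetilde K_{R/\widetilde K_R}$ then $\overline{\sigma_i(xy)}$ is nilpotent in $R/\widetilde K_R$ for all $y,i$, so a power $\sigma_i(xy)^m$ lies in $\widetilde K_R$ and is therefore nilpotent; hence $\sigma_i(xy)$ is nilpotent, $x\in\widetilde K_R$, and $\bar x=0$. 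I expect the only genuine obstacle to be the bookkeeping in (5): one must verify that every monomial occurring in Amitsur's expansion carries at least one positive-length word evaluated inside $I$, so that its coefficient is nilpotent, and that such nilpotence survives the base change --- which is precisely where the centrality and commutativity of $\sigma(R)$ and the $n$--CH hypothesis (through Lemma \ref{CHC}) are indispensable.
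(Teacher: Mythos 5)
Your proposal is correct and follows essentially the same route as the paper: the paper's own proof is only a sketch (part 1 is declared clear, part 4 uses the identity $\sigma_j(\sigma_i(a)y)=\sigma_i(a)^j\sigma_j(y)$, and parts 2 and 5 are attributed to Lemma \ref{CHC} and Amitsur's Formula), and your write-up supplies exactly the intended details, including the key observation that every term of the Amitsur expansion of $\sigma_i(\sum_k b_k\otimes x_k)$ carries a central factor $\sigma_{j_s}(p_s)$ with $p_s$ a positive-length word in elements of the nil ideal $I$. The only point worth noting is that the paper's definition of $\widetilde K_R$ contains a typo ($\sigma_i(x)$ with a dangling $\forall y$); your reading $\sigma_i(xy)$ nilpotent for all $y$ is the one consistent with the paper's proof of part 3, and your arguments extend verbatim to $y\in{}_BR$ as required by Remark \ref{spl}.
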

\begin{proof}
1)\quad The first part is clear.  2)\quad   Follows from the previous Lemma  \ref{CHC}.

3)\quad   If the class of $r\in R$ is in  the radical $\tilde S$ of $ S:=R/\widetilde K_R$ we have that for each $ y\in _BR$,  $\sigma_i(ry)$ is nilpotent, in $_BS$  hence $\sigma_i(ry)$ is nilpotent also in $_BR$  and so  $r\in \widetilde K_R$.

4)\quad   As for the last statement,    $\sigma_j(\sigma_i(a) y)=\sigma_i(a)^j\sigma_j( y)$ is nilpotent.

5)\quad   This follows from  the previous Lemma  \ref{CHC} and Amitsur's Formula.\end{proof}

\begin{corollary}\label{CHC1} Let $R$ be a $n$--CH  algebra with $\sigma$--algebra reduced (no nonzero nilpotent elements) 
  then if $r\in R$ is nilpotent  we have $r^n=0$.
\end{corollary}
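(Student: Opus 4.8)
The plan is to combine Lemma \ref{CHC} with the defining Cayley--Hamilton relation, using the reducedness hypothesis to kill the lower coefficients. First I would take $r\in R$ nilpotent and apply Lemma \ref{CHC}, which states that in an $n$--CH algebra nilpotency of $r$ is equivalent to nilpotency of all the $\sigma_i(r)$. Thus every $\sigma_i(r)$ is a nilpotent element.

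Next I would observe that each $\sigma_i(r)$ lies in the $\sigma$--algebra $\sigma(R)$ of $R$, the commutative central subalgebra generated by the elements $\sigma_j(a)$ (Definition \ref{sigal}). By hypothesis $\sigma(R)$ is reduced, so a nilpotent element of $\sigma(R)$ must be zero; hence $\sigma_i(r)=0$ for every $i$, and in particular for $1\le i\le n$. Finally I would feed this into the Cayley--Hamilton condition: since $R$ is an $n$--CH algebra, $r$ satisfies $CH_n(r)=r^n+\sum_{i=1}^n(-1)^i\sigma_i(r)r^{n-i}=0$ (condition 2 of Definition \ref{tchc}). As all $\sigma_i(r)$ vanish, every summand in the sum disappears and we are left with $r^n=0$, which is exactly the assertion.

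There is essentially no serious obstacle here, since the statement is a direct splicing together of two already-established facts. The only point that needs a moment's care is the bookkeeping that the reducedness hypothesis really does apply to the relevant elements: this is immediate once one notes that the $\sigma_i(r)$ are by definition among the generators of $\sigma(R)$, so ``$\sigma$--algebra reduced'' forces each of them to be zero the moment Lemma \ref{CHC} has shown them nilpotent. One might add the remark that the conclusion $r^n=0$ is optimal in the sense that no smaller uniform bound can be expected, since even for $R=M_n$ a nilpotent Jordan block has $r^{n-1}\neq 0$.
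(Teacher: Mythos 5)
Your proof is correct and is essentially the argument the paper intends (the corollary is stated without a separate proof precisely because it follows this way from Lemma \ref{CHC}): nilpotency of $r$ forces all $\sigma_i(r)$ to be nilpotent, reducedness of $\sigma(R)$ then forces them to vanish, and the Cayley--Hamilton relation leaves $r^n=0$. Nothing further is needed.
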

In particular we have 
\begin{corollary}\label{CHC2}   1)\quad An  $n$--CH  algebra $R$ is semiprime if and only if  its   $\sigma$--algebra is reduced  and the radical $\widetilde K_R=0$.

2)\quad An  $n$--CH  algebra $R$ is   prime if and only if   its   $\sigma$--algebra is a domain and $\widetilde K_R=0$.

3)\quad An  $n$--CH  algebra $R$ is   simple if and only if   its   $\sigma$--algebra is a field and $\widetilde K_R=0$.
\end{corollary}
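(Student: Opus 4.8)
The plan is to reduce all three equivalences to statements about the commutative ring $\sigma(R)$, using the radical $\widetilde K_R$ of Definition \ref{ker} as the bridge. The observation that drives everything is a consequence of regularity: \emph{if $\widetilde K_R=0$ and $I$ is a nonzero $\sigma$--ideal, then $I\cap\sigma(R)\neq 0$.} Indeed, pick $0\neq x\in I$; since $x\notin\widetilde K_R$ there are $y\in R$ and $i$ with $\sigma_i(xy)$ not nilpotent, in particular $\sigma_i(xy)\neq 0$, and as $I$ is a $\sigma$--ideal we have $xy\in I$, hence $\sigma_i(xy)\in I\cap\sigma(R)$. Conversely, for $c\in\sigma(R)$ the ideal $cR$ is a $\sigma$--ideal: it is two--sided because $c$ is central, and it is closed under the $\sigma_i$ because these are homogeneous of degree $i$ and the elements of $\sigma(R)$ are central, so $\sigma_i(cr)=c^i\sigma_i(r)\in cR$ for $i\geq 1$ (cf. \eqref{sigre}). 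These two remarks let me pass freely between $\sigma$--ideals of $R$ and ideals of $\sigma(R)$.

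For part 1, one implication is easy: $\widetilde K_R=0$ already forces $R$ to be semiprime. Any nilpotent ideal $N$ is universally nil (a power of $N$ vanishes after every base change), so by the maximality in Proposition \ref{nonde}(2) we get $N\subseteq\widetilde K_R=0$; thus $R$ has no nonzero nilpotent ideal. For the converse assume $R$ semiprime. That $\sigma(R)$ is reduced is immediate: a nonzero $s\in\sigma(R)$ with $s^2=0$ produces the nonzero square--zero ideal $sR$, impossible. The substantive point is $\widetilde K_R=0$. Note first that $\widetilde K_R$ is a \emph{nil} ideal: for $x\in\widetilde K_R$, taking $y=1$ shows all $\sigma_i(x)$ are nilpotent, so $x$ is nilpotent by Lemma \ref{CHC}.

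This last step is where I expect the main obstacle: in a general ring a nil ideal need not be nilpotent, so semiprimeness by itself does not kill it. What saves the day is that $R$ satisfies all the polynomial identities of $M_n(\mathbb Z)$ (Remark \ref{PIn}), hence is a PI--ring, and \emph{in a semiprime PI--ring every nil ideal vanishes}. Therefore $\widetilde K_R=0$, completing part 1. (Equivalently, once $\sigma(R)$ is reduced one checks $\widetilde K_R=K_R$ and that $K_R$ is a nil PI--ideal, then applies the same PI input.)

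For parts 2 and 3 the radical condition $\widetilde K_R=0$ is free, since prime and simple both imply semiprime (prime implies semiprime as noted, and simple implies prime: if $\sigma$--ideals $I,J$ satisfy $IJ=0$ and both equal $R$ then $IJ=R\neq 0$). It remains to read off $\sigma(R)$. If $R$ is prime and $s,t\in\sigma(R)\setminus\{0\}$ satisfied $st=0$, then $sR,tR$ are nonzero $\sigma$--ideals with $(sR)(tR)=(st)R=0$, contradicting primeness, so $\sigma(R)$ is a domain; conversely, if $\sigma(R)$ is a domain and $\widetilde K_R=0$, then for nonzero $\sigma$--ideals $I,J$ the bridge gives nonzero $s\in I\cap\sigma(R)$, $t\in J\cap\sigma(R)$, whence $0\neq st\in IJ$ and $R$ is prime. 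If $R$ is simple, then for $0\neq s\in\sigma(R)$ the $\sigma$--ideal $sR$ equals $R$, so $s$ is invertible in $R$; since $\sigma_n(s)=s^n$, multiplicativity of the norm gives $\sigma_n(s^{-1})=s^{-n}\in\sigma(R)$, and thus $s^{-1}=s^{n-1}\sigma_n(s^{-1})\in\sigma(R)$, proving $\sigma(R)$ a field; conversely, if $\sigma(R)$ is a field and $\widetilde K_R=0$, any nonzero $\sigma$--ideal $I$ contains a nonzero $s\in\sigma(R)$ by the bridge, so $1=s^{-1}s\in I$ and $I=R$, i.e. $R$ is simple.
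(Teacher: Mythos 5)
Your proof is correct and follows essentially the same route as the paper: the reduced/domain/field properties of $\sigma(R)$ are extracted from $\sigma$--ideals of the form $sR$, the radical is killed via the fact that $\widetilde K_R$ is nil and a semiprime PI--ring has no nonzero nil ideals, and the converses rest on the observation that when $\widetilde K_R=0$ a nonzero $\sigma$--ideal must meet $\sigma(R)$ nontrivially (the paper states the contrapositive, $\sigma(I)=0\Rightarrow I\subseteq K_R=0$). The only genuine local difference is in part 3, where you place $s^{-1}$ in $\sigma(R)$ by the norm computation $s^{-1}=s^{n-1}\sigma_n(s^{-1})$ with $\sigma_n(s^{-1})=s^{-n}$, while the paper instead notes that the fraction field of $\sigma(R)$ embeds in $R$, is integral over $\sigma(R)$ by Cayley--Hamilton, and hence coincides with it; both arguments are valid.
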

\begin{proof} 1)\ Assume $R$ semiprime. If the $\sigma$--algebra contains a non zero nilpotent element $a$ then  $Ra$ is a nilpotent ideal a contradiction.  Since $R$ is a PI algebra, it is semiprime if and only if it does not contain a  nonzero nil ideal. So  since $\widetilde K_R $  is nil  and $R$ is   semiprime $\widetilde K_R = 0$.\smallskip

Conversely if $R$  has an ideal $I\neq 0$  with $I^2=0$  then for each $a\in {}_BI$ we have $\sigma_i(a)$  is nilpotent for all $i$, by Lemma \ref{CHC}  then  $I\subset \widetilde K_R$.\smallskip

2)\ As for the second statement let  us show that the given conditions imply $R$  prime. In fact  given two $\sigma$--ideals  $I,J$ with $IJ=0$ since  $\sigma(I)\subset I,\ \sigma(J)\subset J$ we have $\sigma(I)\sigma(J)=0$. Since these are ideals and $\sigma(R)$ is a domain one of them must be 0.  If $\sigma(I)=0$ then $I\subset   K_R=\{0\}$  since $R$ is semiprime and by the previous statement.

Conversely if $R$ is prime in particular it is semiprime so we must have $\widetilde K_R=0$. If we had two non zero elements  $a,b\in \sigma(R)$ with $ab=0$  we would have $Ra\cdot Rb=0$  and $Ra, \ Rb$ are $ \sigma$ ideals, a contradiction.

3)  If $R$ is simple it is prime so $\sigma(R)$ is a domain. We need to show that if $a\in\sigma(R)$  then $a$ is invertible, and the element $b$ with $ab=1$ is in $\sigma(R)$.  First the ideal $aR$  is $\sigma$ stable so it must be $R$ and  there is an element $b$ with $ab=ba=1$.   Thus the field of fractions $K$ of $\sigma(R)$ is  contained in $R$, but since $K$  is integral over $\sigma(R)$, by the going up Theorem it coincides with $\sigma(R)$.

Conversely if $\sigma(R)$ is a field and $I$ is a nonzero proper $\sigma $ ideal, for every $a\in I$ we must have $ \sigma_i(a)\in I\implies   \sigma_i(a)=0,\ \forall i$. Then $I\subset K_R=\{0\}$.

\end{proof}\begin{proposition}\label{mm} 1)\quad If $R$ is a semiprime $n$ Cayley-Hamilton algebra with   $a\in \sigma(R)$   not a zero divisor in $\sigma(R)$ then $a$ is not a zero divisor in $R$.
\smallskip

2)\quad  If $R$ is a  prime  $\Sigma$--algebra,   $\sigma(R)$ is a domain and $R$ is torsion free relative to $\sigma(R)$.
\end{proposition}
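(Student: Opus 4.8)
The common engine for both parts is a \emph{scaling lemma}: for every $s\in\sigma(R)$, every commutative $A$--algebra $B$, every $r\in {}_BR$ and every $i$ one has $\sigma_i(sr)=s^i\sigma_i(r)$; in particular (using also $\sigma_i(ab)=\sigma_i(ba)$) the two--sided ideal $Rs$ is a $\sigma$--ideal. For $s$ one of the generators $\sigma_j(a)$ of $\sigma(R)$ this is exactly the axiom $\sigma_i(\sigma_j(a)b)=\sigma_j(a)^i\sigma_i(b)$ from \eqref{sigre}, and it propagates to products of generators by iteration. The plan is to obtain it for an arbitrary $s\in\sigma(R)$ by a base--change argument: since $\sigma(R)$ is a \emph{commutative} central subalgebra, I would regard $R$ as an algebra over $\sigma(R)$ and use that the $\sigma_i$, being polynomial laws homogeneous of degree $i$, stay homogeneous after extending scalars along $A\to\sigma(R)$; evaluating the homogeneity identity $\sigma_i(\beta x)=\beta^i\sigma_i(x)$ at the scalar $\beta=s$ and pushing the result forward along the central multiplication map $\sigma(R)\otimes_A R\to R$ gives $\sigma_i(sr)=s^i\sigma_i(r)$. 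I expect the passage from the generators of $\sigma(R)$ to \emph{all} of $\sigma(R)$ --- equivalently, the additive case --- to be the main obstacle, precisely because $\sigma_i$ is not additive; the delicate point is the compatibility of the $\sigma_i$ with the central multiplication map, where the polynomial--law formalism must be invoked with care.

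Granting the scaling lemma, part 1) is short. Let $R$ be semiprime and $a\in\sigma(R)$ a non--zero--divisor of $\sigma(R)$; since $a$ is central it suffices to show that $ab=0$ implies $b=0$. From $ab=0$ and centrality, $a\,(by)=(ab)\,y=0$ for every $y\in R$, so the scaling lemma gives $a^i\sigma_i(by)=\sigma_i\big(a\,(by)\big)=0$ in $\sigma(R)$, for all $i$ and all $y$. As $a$, hence $a^i$, is a non--zero--divisor of $\sigma(R)$ and $\sigma_i(by)\in\sigma(R)$, this forces $\sigma_i(by)=0$ for all $i$ and all $y$, which is exactly the statement that $b\in K_R$. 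Finally $K_R\subseteq\widetilde K_R$, and $\widetilde K_R=0$ for a semiprime $n$--CH algebra by Corollary \ref{CHC2}, so $b=0$.

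For part 2) let $R$ be a prime $\Sigma$--algebra. To see that $\sigma(R)$ is a domain, suppose $s,t\in\sigma(R)$ are nonzero with $st=0$. By the scaling lemma $Rs$ and $Rt$ are $\sigma$--ideals, and centrality together with $st=0$ gives $(Rs)(Rt)=0$, since $r_1 s\,r_2 t=r_1r_2\,st=0$. Primeness forces $Rs=0$ or $Rt=0$; but $1\in R$ gives $s=1\cdot s\in Rs$ and $t\in Rt$, so both are nonzero, a contradiction. Hence $\sigma(R)$ is a domain. For torsion--freeness fix $0\neq s\in\sigma(R)$ and consider $\mathrm{Ann}(s)=\{r\in R\mid sr=0\}$, a two--sided ideal by centrality. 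If $r\in\mathrm{Ann}(s)$ then the scaling lemma gives $s^i\sigma_i(r)=\sigma_i(sr)=0$; as $\sigma(R)$ is now a domain and $s^i\neq0$, we conclude $\sigma_i(r)=0\in\mathrm{Ann}(s)$, so $\mathrm{Ann}(s)$ is a $\sigma$--ideal. Moreover $(Rs)\,\mathrm{Ann}(s)=0$, because $(r's)\rho=r'(s\rho)=0$ for every $\rho\in\mathrm{Ann}(s)$. Since $Rs\neq0$, primeness yields $\mathrm{Ann}(s)=0$; as $0\neq s$ was arbitrary, $R$ is torsion--free over $\sigma(R)$.
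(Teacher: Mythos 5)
Your argument is correct and follows essentially the same route as the paper: Procesi's proof likewise rests on the identity $\sigma_i(ar)=a^i\sigma_i(r)$ for $a\in\sigma(R)$ (which he simply asserts, so your concern about extending the axiom \eqref{sigre} from the generators $\sigma_j(a)$ to all of $\sigma(R)$ --- products by iteration, sums via Amitsur's formula --- applies equally to the original), and then kills the annihilator ideal $J=\{r\in R\mid ar=0\}$ by semiprimeness in 1) (he shows $J$ is nil via the Cayley--Hamilton identity where you show $J\subseteq K_R\subseteq\widetilde K_R=0$, which amounts to the same thing) and by primeness applied to $J\cdot Ra=0$ in 2). Your only addition is the explicit verification that $\sigma(R)$ is a domain in 2), which the paper leaves to the identical $Ra\cdot Rb=0$ argument already given in Corollary \ref{CHC2}.
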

\begin{proof}
1) Let $J:=\{r\in R\mid ar=0\}$,  then $J$ is an ideal and we claim it is nil hence by hypothesis 0.  In fact 
taking one of the functions $\sigma_i$ we have   $0=\sigma_i(ar)=a^i\sigma_i(r)$  implies $\sigma_i(r)=0$  for all $r\in J$, since $r$ satisfies its CH  it must be $r^n=0$.

2) If $a\in \sigma(R)$ and $J:=\{r\in R\mid ar=0\}$ then both $J$  and $Ra$  are   ideals closed under the $\Sigma$  operations  and $J Ra=0$.  Since $R$ is prime and $Ra\neq 0$ it follows that $J=0$.
\end{proof}
 Finally the local finiteness property:
\begin{proposition}\label{finl}
An  $n$--CH  algebra $R$ finitely generated over its $\sigma$--algebra $\sigma(R)$  is a finite $\sigma(R)$ module.
\end{proposition}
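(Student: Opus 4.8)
The plan is to deduce the statement from two structural features of $R$: the Cayley--Hamilton relations, which force every element to be integral of bounded degree over $C:=\sigma(R)$, and the fact that $R$ is a polynomial identity algebra, which via Shirshov's height theorem converts integrality plus finite generation into module--finiteness. Recall that $C$ is central in $R$ by Definition \ref{sigal}, so it makes sense to speak of $R$ as an algebra, and of module structures, over $C$. By Remark \ref{PIn} the algebra $R$ satisfies all the polynomial identities of $M_n(\mathbb Z)$; in particular it satisfies a multilinear identity of degree $d=2n$, for instance the standard identity of Amitsur--Levitzki.

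First I would record the integrality statement. For every $r\in R$ the three $CH_n$ conditions give the monic relation $CH_n(r)=r^n+\sum_{i=1}^n(-1)^i\sigma_i(r)r^{n-i}=0$ of degree $n$, all of whose coefficients $\sigma_i(r)$ lie in $C$. Hence every element of $R$ is integral over $C$ of degree at most $n$, and by induction on $k$ starting from $CH_n(w)=0$ one gets $w^k\in\sum_{j=0}^{n-1}Cw^j$ for each $w\in R$ and each $k$. The essential point to note here is that this reduction of a single power $w^k$ to powers $w^j$ with $j<n$ involves only powers of the one element $w$, so it introduces no new words among the generators.

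Now write $R=C\langle x_1,\dots,x_m\rangle$ and invoke the standard PI fact, proved through Shirshov's height theorem, that a finitely generated PI algebra over a commutative base ring $C$ is spanned over $C$ by the bounded--height monomials $w_1^{k_1}\cdots w_s^{k_s}$, where the $w_t$ range over a fixed finite set $W$ of words in $x_1,\dots,x_m$ of length $<d$ and the number of factors satisfies $s\le h=h(m,d)$. Applying the integrality relation of the previous paragraph to each factor $w_t^{k_t}$, every such monomial becomes a $C$--linear combination of monomials $w_1^{e_1}\cdots w_s^{e_s}$ with $w_t\in W$, $s\le h$ and each exponent $e_t<n$. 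Since $W$ is finite, $s$ is bounded by $h$, and the exponents are bounded by $n$, there are only finitely many reduced monomials of this shape, and they span $R$ over $C$; this proves that $R$ is a finite $C$--module.

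The substantive input is Shirshov's height theorem, and the rest is bookkeeping. The one step requiring care is that the exponent reduction $w^k\mapsto\sum_{j<n}c_j w^j$ stays inside the finite family $W$, which is precisely why no new words are produced and the spanning set remains finite; it is the combination of \emph{bounded} integrality (degree $\le n$ for all elements, coming uniformly from the characteristic polynomial) with \emph{bounded} height that does the work. I would also emphasize that no Noetherian or finiteness hypothesis on $C=\sigma(R)$ is needed, since Shirshov's theorem holds over an arbitrary commutative base ring.
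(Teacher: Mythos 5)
Your proof is correct and follows the same route as the paper: the Cayley--Hamilton relation gives integrality of degree $\leq n$ over the central subring $\sigma(R)$, and Shirshov's height theorem then yields module--finiteness. The paper simply cites this second step as a standard PI result (\cite{agpr} Theorem 8.2.1), whereas you have written out the bookkeeping explicitly.
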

\begin{proof}
The Cayley Hamilton identity implies that each element of $R$ is integral over $\sigma(R)$ of degree $\leq n$ then this is a standard result in PI rings  consequence of Shirshov's Lemma, \cite{agpr} 
Theorem 8.2.1..
\end{proof}

\subsubsection{Semisimple algebras} In this section $F$ denotes  an infinite field, this hypothesis could be removed but it simplifies the treatment.
We want to study general CH algebras over  $F$ such that the values of the norm and hence of all the $\sigma_i$  are in  $F$.

First a simple fact. Let $R=R_1\oplus R_2$ be an $F$ algebra and $N: R\to F$ a multiplicative polynomial map. Then setting $e_1,e_2$ the two unit elements of $R_1,R_2$ we have $N(a,b)=N((a,e_2) (e_1,b))=N(a,e_2)N(e_1,b)$. 

Set $N_1(a):=N(a,e_2),\ N_2(b):=N(e_1,b)$. Clearly $N_i: R_i\to F,\ i=1,2$ are both multiplicative polynomial maps and $N(a,b)= N_1(a )N_2( b)$.

\begin{proposition}\label{hh} If $N$ is homogeneous of degree $n$ there are two positive integers $h_1,h_2>0$ with $h_1+h_2=n$ and $N_1(a) $ is  homogeneous of degree $h_1$ while $N_2(b)$ is    homogeneous of degree $h_2$.
\end{proposition}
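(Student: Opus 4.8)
The plan is to reduce everything to the one--variable multiplicative polynomials $p_i(s):=N_i(s\,1_{R_i})$ and to use that $F$ is infinite, so a polynomial law is determined by its values (Remark \ref{fpf}) and a function identity holding on all $F$--points is a polynomial identity. Since $N_1$ is multiplicative and $st\,1_{R_1}=(s\,1_{R_1})(t\,1_{R_1})$, the polynomial $p_1$ satisfies the two--variable identity $p_1(st)=p_1(s)p_1(t)$, together with $p_1(1)=N_1(1_{R_1})=N(1_R)=1$ (note $N(1_R)\in\{0,1\}$ by multiplicativity and $N(1_R)=0$ would force $N\equiv0$, which is excluded). A one--variable polynomial with $p(st)=p(s)p(t)$ and $p(1)=1$ must be a monomial: the expansion of $p(st)$ has only ``diagonal'' terms $s^kt^k$, so every off--diagonal coefficient of $p(s)p(t)$ vanishes and at most one coefficient of $p$ survives. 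Hence $p_1(s)=s^{h_1}$ and $p_2(s)=s^{h_2}$ for well--defined integers $h_1,h_2\ge 0$.

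I would then upgrade this to homogeneity of the full laws. For arbitrary $x\in R_1$ one has $sx=(s\,1_{R_1})x$, so multiplicativity gives $N_1(sx)=N_1(s\,1_{R_1})N_1(x)=s^{h_1}N_1(x)$; thus $N_1$ is homogeneous of degree $h_1$, and likewise $N_2$ of degree $h_2$. To pin down the sum of the degrees I evaluate $N$ on the central scalar $s\,1_R=(s\,1_{R_1},\,s\,1_{R_2})$: using the factorization $N(a,b)=N_1(a)N_2(b)$ obtained before the statement, and the degree $n$ homogeneity of $N$, I get $s^n=N(s\,1_R)=N_1(s\,1_{R_1})N_2(s\,1_{R_2})=s^{h_1}s^{h_2}=s^{h_1+h_2}$ in $F[s]$, whence $h_1+h_2=n$.

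The delicate point is the strict positivity of each $h_i$, and this is where I expect the real obstacle: positivity is \emph{false} for a general multiplicative homogeneous law (taking $N(a,b):=N_2(b)$ gives $h_1=0$), so the argument must genuinely use that $R$ is a Cayley--Hamilton algebra, the running hypothesis of this section. I apply the characteristic polynomial to the idempotent $e_1=(1_{R_1},0)$. Since $t\,1_R-e_1=((t-1)1_{R_1},\,t\,1_{R_2})$, the factorization yields $\chi_{e_1}(t)=N(t\,1_R-e_1)=(t-1)^{h_1}\,t^{h_2}$. The Cayley--Hamilton identity $\chi_{e_1}(e_1)=0$ then reads $(e_1-1_R)^{h_1}e_1^{h_2}=(-e_2)^{h_1}e_1^{h_2}=0$. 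For $h_1,h_2\ge1$ this is automatic because $e_2e_1=0$; but if $h_1=0$ it collapses to $e_1^{h_2}=e_1^{\,n}=e_1\neq0$ (as $R_1\neq0$), a contradiction. Hence $h_1\ge1$, and symmetrically $h_2\ge1$. Combined with $h_1+h_2=n$ this proves the proposition.

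In summary, the two computational paragraphs are formal consequences of multiplicativity and the infinite--field hypothesis, while the strict inequalities $h_1,h_2>0$ are the substantive content: they cannot follow from the norm axioms alone and must be extracted from the Cayley--Hamilton relation evaluated at the central idempotents $e_1,e_2$.
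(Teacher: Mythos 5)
Your argument is correct, and it reaches the conclusion by a different and more elementary route than the paper. For the degree computation the paper restricts $N$ to $Fe_1\oplus Fe_2$ and factors it through $\Gamma_n(F\oplus F)=[(F\oplus F)^{\otimes n}]^{S_n}\cong F^{n+1}$: a homomorphism from this product of fields to $F$ kills all but one idempotent $e_1^{h}e_2^{n-h}$, which forces $N(\alpha e_1+\beta e_2)=\alpha^h\beta^{n-h}$. Your computation with $p_i(st)=p_i(s)p_i(t)$ extracts exactly the same information without invoking the divided powers, and it has the merit of making explicit the step $N_1(sx)=p_1(s)N_1(x)$ that upgrades the scalar identity to homogeneity of the whole law --- a step the paper leaves implicit. (A small point: you do not even need $F$ infinite here; working over $B=F[s,t]$ the identity $p_1(st)=p_1(s)p_1(t)$ is an identity of polynomials by the definition of a polynomial law.)

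The more substantive divergence is the positivity of $h_1,h_2$. You are right that this cannot follow from multiplicativity alone ($N(a,b):=N_2(b)$ is a counterexample), and the paper's proof in fact never addresses it: nothing in the argument excludes the summand $h=0$ or $h=n$. Your derivation from the Cayley--Hamilton identity $\chi_{e_1}(e_1)=(-e_2)^{h_1}e_1^{h_2}=0$ is a legitimate repair and is, as far as I can see, the missing ingredient that Corollary \ref{dsch1} actually relies on. The one caveat is that the paragraph preceding Proposition \ref{hh} assumes only that $N$ is a multiplicative polynomial map, not that $R$ is Cayley--Hamilton; so strictly you are proving a corrected statement under an added hypothesis. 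This is harmless for the logic of the section provided one notes that your first two paragraphs define the degrees $h_i\ge 0$ without using the CH hypothesis, so the ``if'' direction of Corollary \ref{dsch} (which starts from the $R_i$ being $h_i$--CH) is not made circular.
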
 
\begin{proof}
Restrict $N$ to $F\oplus F$  then  $N$ factors  through a homomorphism of $[(F\oplus F)^{\otimes n}]^{S_n}\to F$.

Now  $[(F\oplus F)^{\otimes n}]^{S_n} $ is the direct sum   of  $n+1$  copies of $F$, each   with unit element the symmetrization $e_1^{h }e_2^{n-h }$  of $e_1^{\otimes h }\otimes e_2^{\otimes n-h }$.   

A homomorphism    $[(F\oplus F)^{\otimes n}]^{S_n}\to F$ thus factors though the projection to one of this summands.       The claim then follows from the remark   that 
$(\alpha e_1+\beta e_2)^{\otimes n}=\sum_{h=0}^n \alpha^h\beta^{n-h}  e_1^{h }e_2^{n-h }$.
\end{proof}\begin{corollary}\label{dsch1}Under the previous hypotheses, $N:R\to F$  a multiplicative polynomial map of degree $n$. If $R=\oplus_{i=1}^kR_i$ then $k\leq n$.
\end{corollary}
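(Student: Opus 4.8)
The plan is to turn the statement into a counting argument by peeling off one summand at a time and invoking Proposition \ref{hh} repeatedly. First I would record the multiplicative decomposition in the presence of $k$ factors, generalizing the simple fact established just before Proposition \ref{hh}. Writing $e_i$ for the unit of $R_i$, the elements $e_1,\dots,e_k$ are orthogonal idempotents with $e_1+\dots+e_k=1$, and since $N$ is multiplicative (with $N(1)=1$, as $N$ is a nonzero homogeneous degree-$n$ multiplicative law) one obtains $N(a_1,\dots,a_k)=\prod_{i=1}^k N_i(a_i)$, where $N_i(a):=N(e_1,\dots,e_{i-1},a,e_{i+1},\dots,e_k)$ is a multiplicative polynomial map $R_i\to F$. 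The whole corollary then follows once I know that each $N_i$ is homogeneous of a \emph{strictly positive} degree $h_i$ and that $\sum_i h_i=n$.

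I would establish this by induction on $k$, the case $k=1$ being trivial ($h_1=n$) and the case $k=2$ being exactly Proposition \ref{hh}. For the inductive step, group the summands as $R=R_1\oplus R'$ with $R'=\oplus_{i=2}^k R_i$, a nonzero algebra with unit $e_2+\dots+e_k$. Applying Proposition \ref{hh} to this binary decomposition yields $N=N_1\cdot N'$ with $N_1$ homogeneous of degree $h_1>0$ and $N'\colon R'\to F$ a multiplicative polynomial map homogeneous of degree $n-h_1>0$. Since $R'$ has $k-1$ nonzero summands and $N'$ has positive degree $n-h_1$, the inductive hypothesis produces positive integers $h_2,\dots,h_k\ge 1$ with $\sum_{i=2}^k h_i=n-h_1$. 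Combining, $\sum_{i=1}^k h_i=n$ with every $h_i\ge 1$, and therefore $k=\sum_{i=1}^k 1\le\sum_{i=1}^k h_i=n$.

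The only point carrying real content --- and the expected hard part, although here it is already done for me --- is the strict positivity $h_i>0$, which is precisely what Proposition \ref{hh} supplies; a factor of degree $0$ would force the corresponding $N_i$ to be the constant map $1$, contributing nothing to the sum and destroying the bound. Everything else is bookkeeping, and the single thing I would check carefully is that $n-h_1\ge 1$ so that the inductive hypothesis genuinely applies to $R'$: this holds because Proposition \ref{hh} gives $n-h_1>0$ for the complementary factor, i.e. $h_1\le n-1$. No hypothesis beyond those already in force for Proposition \ref{hh} (in particular the assumption that $F$ is an infinite field) is needed.
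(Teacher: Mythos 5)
Your proof is correct and follows essentially the same route as the paper: the paper's one-line argument is precisely that $N=\prod_i N_i$ with each $N_i$ homogeneous of degree $h_i>0$ and $\sum_i h_i=n$, which is what you obtain by iterating Proposition \ref{hh}. You have merely made explicit the induction on $k$ that the paper leaves implicit.
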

\begin{proof}
 $N$ is a product of the norms $N_i$ each with some degree $h_i>0$ and $n=\sum_ih_i$.

\end{proof}

\begin{corollary}\label{dsch}Under the previous hypotheses, $R$ is an $n$--CH algebra if and only if  $R_i$ is an $h_i$ CH--algebra for $i=1,2$.

\end{corollary}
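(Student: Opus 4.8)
The plan is to base everything on the factorization of the norm recorded just above Proposition \ref{hh}, made into an identity of polynomial laws. Over any commutative $F$-algebra $B$, write an element of $B\otimes_F R$ as a pair $(u,v)$ with $u\in B\otimes_F R_1$, $v\in B\otimes_F R_2$; since $(u,v)=(u,e_2)(e_1,v)$, multiplicativity of $N$ gives $N(u,v)=N_1(u)N_2(v)$, where $N_i$ is the degree-$h_i$ norm on $R_i$ furnished by Proposition \ref{hh}, and $h_1+h_2=n$. By naturality of polynomial laws this factorization survives every base change. Applying it to $t\cdot 1_R-a$ for $a=(a_1,a_2)$ yields
\[
\chi_a(t)=\chi^{(1)}_{a_1}(t)\,\chi^{(2)}_{a_2}(t),\qquad \chi^{(i)}_{a_i}(t):=N_i(te_i-a_i),
\]
where $\chi^{(i)}_{a_i}$ is the characteristic polynomial taken for the pair $(R_i,N_i)$. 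These factors have scalar coefficients in $B$, so $t\mapsto a$ is a ring homomorphism and $\chi_a(a)=\chi^{(1)}_{a_1}(a)\,\chi^{(2)}_{a_2}(a)$.

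For the direction that each $R_i$ being $h_i$-CH implies $R$ is $n$-CH I would simply project onto the summands. The projection $B\otimes_F R\to B\otimes_F R_1$ is a ring homomorphism sending $a$ to $a_1$, so the $R_1$-component of $\chi_a(a)$ equals $\chi^{(1)}_{a_1}(a_1)\,\chi^{(2)}_{a_2}(a_1)$, whose first factor vanishes because $R_1$ is $h_1$-CH; symmetrically the $R_2$-component vanishes. Hence $\chi_a(a)=0$ for every $a$ over every $B$, and $R$ is $n$-CH. This direction is routine.

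The converse is the delicate point and the main obstacle. Assume $R$ is $n$-CH and fix $a_1\in B\otimes_F R_1$; the goal is $\chi^{(1)}_{a_1}(a_1)=0$. The naive substitutions $a_2=0$ or $a_2=e_2$ fail: then $\chi^{(2)}_{a_2}(t)$ becomes $t^{h_2}$ or $(t-1)^{h_2}$, and the Cayley--Hamilton relation only delivers $a_1^{h_2}\chi^{(1)}_{a_1}(a_1)=0$ (respectively $(a_1-e_1)^{h_2}\chi^{(1)}_{a_1}(a_1)=0$), leaving an extraneous factor that cannot be cancelled. The remedy is to introduce a \emph{generic scalar}: pass to the base ring $B[s]$ and take $a=(a_1,se_2)$. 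Using $N_2(e_2)=N(1_R)=1$ one gets $\chi^{(2)}_{se_2}(t)=N_2((t-s)e_2)=(t-s)^{h_2}$, hence $\chi_a(t)=(t-s)^{h_2}\chi^{(1)}_{a_1}(t)$. Applying the Cayley--Hamilton property of $R$ over $B[s]$ and projecting to $B[s]\otimes_F R_1=(B\otimes_F R_1)[s]$ gives
\[
(a_1-se_1)^{h_2}\,\chi^{(1)}_{a_1}(a_1)=0\quad\text{in }(B\otimes_F R_1)[s].
\]
Since $\chi^{(1)}_{a_1}(a_1)$ is a fixed element of $B\otimes_F R_1$ independent of $s$, reading off the coefficient of $s^{h_2}$, namely $(-1)^{h_2}\chi^{(1)}_{a_1}(a_1)$, forces $\chi^{(1)}_{a_1}(a_1)=0$; by symmetry $R_2$ is $h_2$-CH. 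The essential ingredient throughout is that the Cayley--Hamilton condition is quantified over all base changes, which is precisely what allows the leading coefficient in $s$ to isolate the bare characteristic polynomial.
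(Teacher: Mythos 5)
Your proof is correct. The easy direction (each $R_i$ being $h_i$-CH implies $R$ is $n$-CH) is the same as the paper's: factor $\chi_{(a_1,a_2)}(t)=\chi^{(1)}_{a_1}(t)\chi^{(2)}_{a_2}(t)$ and observe the product vanishes componentwise. The converse, however, is handled genuinely differently. The paper also substitutes a scalar for the second component, taking $a_2=\alpha e_2$ with $\alpha\in F$, and then disposes of the extraneous factor $(a_1-\alpha)^{h_2}$ by letting $\alpha$ range over the infinite field $F$ and invoking the minimal polynomial of $a_1$ over $F$: since $f(t)$ divides $\chi^{(1)}_{a_1}(t)(t-\alpha)^{h_2}$ for every $\alpha$, it divides $\chi^{(1)}_{a_1}(t)$. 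You instead adjoin a generic scalar $s$, base-change to $B[s]$, and extract the coefficient of $s^{h_2}$ from $(a_1-se_1)^{h_2}\chi^{(1)}_{a_1}(a_1)=0$. Your version buys two things: it works uniformly for $a_1\in B\otimes_F R_1$ over an arbitrary commutative $F$-algebra $B$ (which is what the polynomial-law definition of CH actually requires, whereas the paper's minimal-polynomial argument is literally carried out only for $a_1\in R_1$ over the field $F$), and it nowhere uses that $F$ is infinite. The paper's version is shorter on the page but leans on the standing hypothesis that $F$ is an infinite field and on $a_1$ being algebraic over $F$. Both are valid in the setting of the section.
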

\begin{proof}
$$\chi_{(a,b)}(t)= N( (te_1-a,te_2-b)= N_1 (te_1-a  )  N_2(te_2-b)  = \chi^1_{  a }(t) \chi^2_{  b }(t) ,$$   so  if  $R_i$ is an $h_i$ CH--algebra for $i=1,2$ we have  \begin{equation}\label{cadd}
\chi_{(a,b)}((a,b))=(\chi^1_{  a }(a),\chi^1_{  a }(b))(\chi^2_{  b }(a),\chi^2_{  b }(b))=0.
\end{equation} 

Conversely assume  $\chi_{(a,b)}((a,b))=0$.  Given $a\in R_1$ take $\alpha\in F$  so $\chi^2_{ \alpha}(t)=(t-\alpha)^{h_2}$  and
$$\chi_{(a,\alpha)}((a,\alpha))=\chi^1_{  a }(a)(a-\alpha)^{h_2} =0,\ \forall \alpha\in F.$$  The minimal polynomial $f(t)$ of $ a$ thus divides    $\chi^1_{  a }(t)(t-\alpha)^{h_2}  ,\ \forall \alpha\in F $   hence it divides    $\chi^1_{  a }(t)$ so $\chi^1_{  a }(a)=0$ and similarly for $b$.

\end{proof}

 A semisimple algebra  $S$ finite dimensional over a field $F$  is isomorphic to the direct sum  $S=\oplus_iM_{k_i}(D_i)$  of matrix algebras over division rings  which are finite dimensional over $F$. 
 
  We treated the theory in characteristic 0 in \cite{ppr}, so we assume that $F$ has some positive characteristic $p>0$.  Let $G_i$ denote the center of $D_i$.
 
 If $F$  is separably closed then all   the $D_i=G_i$  are fields, purely inseparable over $F$ (this follows from the fact that a division ring $D$ of degree $n$ has a separable element of degree $n$ over the center, see for instance Saltmen \cite{salt}). We ask in general which norms exist  on $S$ with values in $F$ which make $S$ an $n$--Cayley--Hamilton algebra.\smallskip
 
 We start with a special  case.
   
  Given   two lists $\underline m:=m_1,\ldots,m_k$ and $\underline a:=a_1,\ldots,a_k$ of positive integers with $\sum_jm_ja_j=n$  consider the algebra  with norm $N$\begin{equation}\label{Fma}
F(\underline m ;\underline a ):=\oplus_{i=1}^k M_{m_i}(F),\ \subset M_n(F),\ N(  r_1,\ldots,r_k)=\prod_{i=1}^k \det(r_i)^{a_i}\end{equation} where the $i^{th}$ block is repeated $a_i$.

$F(\underline m ;\underline a )$  is a subalgebra (of block diagonal matrices) of $M_n(F)$ and then the norm $N$ equals the determinant, hence it is an  $n$ Cayley--Hamilton algebra, and, as $\sigma$-- algebra, it is {\em simple}.

  Conversely  we have the standard:
  \begin{proposition}\label{ssn}
If $F$ is algebraically closed and $S\subset M_n(F)$ is a semisimple algebra  then it is one of  the algebras $F(m_1,\ldots,m_k;a_1,\ldots,a_k)$.
\end{proposition}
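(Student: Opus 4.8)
The plan is to recast the inclusion $S\subset M_n(F)$ in module-theoretic terms and then feed it into Wedderburn's theorem. Since $M_n(F)=\mathrm{End}_F(F^n)$, giving the inclusion is the same as endowing the column space $V:=F^n$ with the structure of a faithful unital left $S$-module: faithful because the inclusion is injective, unital because we take $1_S=1_{M_n(F)}=I_n$. Moreover, if two module structures on $V$ are isomorphic as $S$-modules, the intertwining isomorphism is realized by some $g\in GL(n,F)$ which then conjugates one image subalgebra into the other. Hence it suffices to determine $V$ as an $S$-module up to isomorphism, and the classification of semisimple subalgebras up to conjugacy reduces to the classification of faithful $n$-dimensional $S$-modules.

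First I would pin down the abstract structure of $S$. By the paragraph preceding the statement, a finite-dimensional semisimple $F$-algebra is a product $\bigoplus_i M_{m_i}(D_i)$ of matrix algebras over division rings finite-dimensional over $F$; since $F$ is algebraically closed the only such division ring is $F$ itself, so $S\cong\bigoplus_{i=1}^k M_{m_i}(F)$. The simple modules of this algebra are the natural modules $V_i\cong F^{m_i}$ of the individual blocks, one for each index $i$. Because $S$ is semisimple, every $S$-module is completely reducible, so in particular $V$ decomposes as $V\cong\bigoplus_{i=1}^k V_i^{\oplus a_i}$ for some multiplicities $a_i\ge 0$.

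The key step is to read the embedding off this decomposition. Faithfulness of $V$ forces every $a_i\ge 1$, since a block acting as zero on $V$ would lie in the kernel of the inclusion; comparing dimensions gives $n=\sum_i m_i a_i$, matching the constraint in \eqref{Fma}. Choosing a basis of $V$ adapted to the decomposition into the simple summands, an element $(r_1,\ldots,r_k)\in\bigoplus_i M_{m_i}(F)$ acts as the block-diagonal matrix in which the block $r_i$ is repeated $a_i$ times; that is, there is $g\in GL(n,F)$ with $gSg^{-1}=F(\underline m;\underline a)$ inside $M_n(F)$. Finally, conjugation by $g$ is an automorphism of $M_n(F)$ preserving the determinant, so the determinant norm on $S$ is carried to $\prod_i\det(r_i)^{a_i}$, which is exactly the norm attached to $F(\underline m;\underline a)$ in \eqref{Fma}.

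I do not expect a serious obstacle: the proposition is essentially the standard conjugacy classification of semisimple subalgebras over an algebraically closed field. The only points that genuinely require care are the two flagged above, namely that faithfulness of the defining representation forces all multiplicities $a_i$ to be strictly positive, and that the identification is understood up to conjugacy in $GL(n,F)$ (equivalently up to the action of $PGL(n,F)$ together with a reordering of the blocks), since a different adapted basis only permutes and recombines the isotypic components.
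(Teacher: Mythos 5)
Your proof is correct and follows essentially the same route as the paper: both view the inclusion $S\subset M_n(F)$ as a faithful $n$-dimensional representation, decompose it into irreducibles of the blocks $M_{m_i}(F)$ with multiplicities $a_i>0$ forced by faithfulness, and read off the block-diagonal form with the determinant norm of Formula \eqref{Fma}. Your explicit remarks about conjugacy by $GL(n,F)$ and unitality only make precise what the paper leaves implicit.
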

\begin{proof} 
A semisimple algebra $S$  over $F$ is of the form $S=\oplus_{i=1}^k M_{m_i}(F)$. 

An embedding of $S$  in $M_n(F)$ is a faithful $n$--dimensional representation of $S$. Now the  representations of $S$  are direct sums  of the irreducible representations $ F^{m_i} $ of the blocks  $M_{m_i}(F)$, and a faithful $n$--dimensional representation of $S$ is thus of the form
$$\oplus_i (F^{m_i})^{\oplus a_i},\ a_i\in\mathbb N,\ a_i>0,\ \sum_i a_im_i=n. $$
For this representation the algebra $S$  appears as block diagonal matrices, with an $m_i\times m_i$ block  repeated $a_i$ times. The norm is  the determinant described by Formula \eqref{Fma}.

\end{proof}Formula \eqref{Fma} can be made axiomatic.  Assume $F$ is an infinite field.
\begin{theorem}\label{ssac}
Suppose that $F(\underline m   ):=\oplus_{i=1}^q M_{m_i}(F)$ is equipped with a Norm $N$ of degree $n$ with values in $F$. Then there are positive integers $a_i$ with $\sum_ia_im_i=n$  so that
 \begin{equation}\label{pono}
N(r_1,\ldots,r_q)=\prod_{i=1}^q\det(r_i)^{a_i}. 
\end{equation}
Hence  it   is the $n$ Cayley--Hamilton algebra $F(\underline m ;\underline a )$.
\end{theorem}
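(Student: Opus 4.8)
The plan is to reduce the statement to the two results already in hand: Proposition \ref{hh}, which factors a norm over a direct sum, and Lemma \ref{multpo}, which pins down the norms on a single matrix block.

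First I would iterate Proposition \ref{hh} across the $q$ summands. Writing $e_i$ for the unit of the block $M_{m_i}(F)$ and grouping $F(\underline m)=M_{m_1}(F)\oplus\big(\oplus_{i\geq 2}M_{m_i}(F)\big)$, Proposition \ref{hh} yields a factorization $N=N_1\cdot N'$ where $N_1(r):=N(r,e_2,\ldots,e_q)$ is a multiplicative polynomial law on $M_{m_1}(F)$ homogeneous of some degree $h_1>0$, and $N'(r_2,\ldots,r_q):=N(e_1,r_2,\ldots,r_q)$ is a norm on $\oplus_{i\geq 2}M_{m_i}(F)$ of degree $n-h_1$. Induction on $q$ then produces multiplicative polynomial laws $N_i\colon M_{m_i}(F)\to F$, each homogeneous of degree $h_i>0$, with
\[
N(r_1,\ldots,r_q)=\prod_{i=1}^q N_i(r_i),\qquad \sum_{i=1}^q h_i=n.
\]
The positivity $h_i>0$ is precisely what Proposition \ref{hh} guarantees at each step.

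Next I would identify each factor $N_i$. Each $N_i$ is a norm on the simple algebra $M_{m_i}(F)$, but Lemma \ref{multpo} is stated over an algebraically closed field, so I would base change to the algebraic closure $\bar F$. Since divided powers, and hence norms, are compatible with base change, $N_i$ extends to a multiplicative polynomial map $N_{i,\bar F}\colon M_{m_i}(\bar F)\to\bar F$ of the same degree $h_i$. Lemma \ref{multpo} applied to $N_{i,\bar F}$ then forces $h_i=a_i m_i$ for some positive integer $a_i$ and $N_{i,\bar F}(r)=\det(r)^{a_i}$. Because $F$ is an infinite field, a polynomial law is determined by its underlying map (Remark \ref{fpf}), and the component of $N_i$ at $F$ is the restriction of $N_{i,\bar F}$ along $M_{m_i}(F)\hookrightarrow M_{m_i}(\bar F)$; hence $N_i(r)=\det(r)^{a_i}$ already for $r\in M_{m_i}(F)$, with values landing in $F$ as required.

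Combining the two steps gives $N(r_1,\ldots,r_q)=\prod_{i=1}^q\det(r_i)^{a_i}$ with $a_i>0$ and $\sum_i a_i m_i=\sum_i h_i=n$, which is exactly Formula \eqref{pono} and exhibits $F(\underline m)$ as the algebra $F(\underline m;\underline a)$. The only genuinely delicate point is the descent in the second step: one must check that the identification obtained over $\bar F$ remains valid over $F$ itself. This is where the hypothesis that $F$ be infinite is essential---it is what lets Remark \ref{fpf} guarantee that $N_i$ is recovered from the map on $M_{m_i}(\bar F)$, so that no information is lost in passing to the closure. Everything else is bookkeeping of degrees.
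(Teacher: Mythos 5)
Your argument is correct, and its first half coincides with the paper's: the paper also reduces to a single block, via Corollary \ref{dsch} (whose content is exactly your iterated application of Proposition \ref{hh}, including the positivity of the degrees $h_i$). Where you diverge is in handling a single block $M_{m_i}(F)$: the paper simply cites Theorem \ref{nazz}, the Ziplies-type computation of $(M_h(Z)^{\otimes m})^{S_m}_{ab}$, which already works over an arbitrary commutative base and hence gives $N_i=\det^{a_i}$ with no further work; you instead invoke Lemma \ref{multpo} over $\bar F$ and descend. Your route is more self-contained for the case at hand (it uses only the Zariski-density lemma rather than the full integral statement), and you correctly identify and discharge the one delicate point, namely that the identification over $\bar F$ determines the law over $F$ because an infinite base field makes a polynomial law determined by its underlying map (Remark \ref{fpf}); this is where the section's standing hypothesis that $F$ is infinite enters your proof, whereas the paper's citation of Theorem \ref{nazz} does not need it. Note only that Lemma \ref{multpo} is itself a fragment of the proof of Theorem \ref{nazz} and its divisibility claim $m=im_i$ rests on the cycle-matrix computation preceding it, so you are not fully independent of that machinery --- you are rederiving the field case of Theorem \ref{nazz} rather than bypassing it. Both proofs inherit from Proposition \ref{hh} the assertion that every block receives a strictly positive degree, so neither adds nor removes anything on that point.
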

\begin{proof} From Corollary \ref{dsch} we are reduced to the case $R=M_h(F)$. In this case the statement is a special case of Theorem \ref{nazz}.
\end{proof}

 \subsubsection{An abstract Theorem}
We have an even more abstract Theorem:
\begin{theorem}\label{simc}
Let $F$ be an algebraically closed field and $S$   an $n$ Cayley--Hamilton algebra with norm in $F$ and radical $\widetilde K_S$. 

Then $S/\widetilde K_S$ is finite dimensional, simple,  and isomorphic to one of the algebras  $F(m_1,\ldots,m_k;a_1,\ldots,a_k)$ as algebra with norm.
\end{theorem}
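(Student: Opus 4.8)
The plan is to reduce the statement to the already--established semisimple case, Proposition~\ref{ssn} together with Theorem~\ref{ssac}, by producing a \emph{faithful norm--compatible embedding} of $T:=S/\widetilde K_S$ into $M_n(F)$. First I would record the structure of $T$. Since the norm takes values in $F$, every $\sigma_i$ takes values in the central subfield $F\cdot 1$ (Remark~\ref{sden}), so $\sigma(S)=F$; and as a nonzero scalar $\lambda$ satisfies $\sigma_n(\lambda)=\lambda^n\neq 0$, no nonzero scalar lies in $\widetilde K_S$. Hence $F$ injects into $T$ and $\sigma(T)=F$ is a field, while Proposition~\ref{nonde}(3) gives $\widetilde K_T=0$. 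Corollary~\ref{CHC2}(1),(3) then show that $T$ is a semiprime, indeed \emph{simple}, $\Sigma$--algebra. Passing to the radical is precisely what puts us in this situation.

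Next I would construct a nonzero norm--compatible representation $\rho\colon T\to M_n(F)$. Presenting $T$ as a quotient of a free $n$--CH algebra $F_{\Sigma,n}\langle X\rangle=(M_n(F)\otimes A_{X,n})^{PGL(n,-)}$ via Corollary~\ref{frch1}, Theorem~\ref{immu} identifies $T$ with $M_n(A_{X,n}/J)^{PGL(n,-)}$ for a $PGL$--stable ideal $J$ with $(A_{X,n}/J)^{PGL(n,-)}=\sigma(T)=F$. Geometrically $V:=\operatorname{Spec}(A_{X,n}/J)$ is a nonempty affine $PGL_n$--variety whose categorical quotient is a single point; since $PGL_n$ is geometrically reductive in every characteristic, $V$ contains a closed orbit, and any $F$--point $v$ of it, being a tuple of matrices satisfying the relations cut out by $J$, yields an $n$--dimensional representation $\rho=\rho_v\colon T\to M_n(F)$. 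For each $a\in T$ the invariant $N(a)\in F$ is a constant function on $V$, so $\det(\rho(a))$, its value at $v$, equals $N(a)$; thus $\rho$ is norm--compatible.

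Then $\ker\rho$ is a $\sigma$--ideal which cannot contain $1$, so simplicity forces $\ker\rho=0$ and $T\hookrightarrow M_n(F)$. In particular $T$ is finite dimensional, and being semiprime and finite dimensional it is semisimple, so its image is a semisimple subalgebra of $M_n(F)$. Proposition~\ref{ssn} then gives $T\cong F(m_1,\dots,m_k;a_1,\dots,a_k)$ as an algebra, and since $\rho$ carries the norm of $T$ to the restriction of $\det$, Theorem~\ref{ssac} (equivalently Theorem~\ref{nazz} applied blockwise) identifies the norm as $N(r_1,\dots,r_k)=\prod_i\det(r_i)^{a_i}$, yielding the isomorphism as an algebra with norm.

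I expect the main obstacle to be the construction of $\rho$: one must know that a trivial categorical quotient produces an actual closed orbit carrying an $F$--rational point in positive characteristic, which is exactly where the \emph{geometric} reductivity of $PGL_n$ is indispensable, linear reductivity being unavailable outside characteristic $0$. A second point to emphasize is that ``simple'' here means simple as a $\Sigma$--algebra, a condition strictly weaker than simplicity as an associative algebra (indeed $F(\underline m;\underline a)$ is only semisimple); this is why one cannot apply Posner's theorem to the ordinary center directly, and must instead route the argument through the explicit finite--dimensional representation $\rho$.
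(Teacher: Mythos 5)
Your first paragraph is fine and matches the paper's own opening move (with values in a field, nilpotent means zero, so the kernel and radical coincide and $T=S/\widetilde K_S$ is $\sigma$--simple with $\sigma(T)=F$). The gap is in your second step. You invoke Theorem \ref{immu} to ``identify $T$ with $M_n(A_{X,n}/J)^{PGL(n,-)}$'' and to conclude $(A_{X,n}/J)^{PGL(n,-)}=F$; but that identification is precisely the statement that $i_R$ is an isomorphism, which the paper proves only for $F\supset\QQ$ and explicitly flags as \emph{the main open problem} in positive characteristic --- and positive characteristic is exactly the case this section is written for (the characteristic--$0$ case was already handled in \cite{ppr}). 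Without that identification your geometric argument does not get off the ground: you cannot assert that $V=\operatorname{Spec}(A_{X,n}/J)$ is nonempty (a priori $J$ could be all of $A_{X,n}$, i.e.\ $T$ might have no $n$--dimensional representation over $F$ at all), nor that its invariant ring is $F$ (taking $PGL_n$--invariants is not right exact in positive characteristic, so geometric reductivity only gives integrality over the image of $\sigma(T)$, not equality), nor therefore that the evaluation at a closed--orbit point is faithful and norm--compatible. So the step ``$T\hookrightarrow M_n(F)$'' is exactly as hard as the open problem you are implicitly assuming.

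The paper takes a completely different, purely algebraic route to finite--dimensionality that avoids representations altogether: for each finite subset $A\subset S$ the subalgebra $S_A$ it generates is a $\Sigma$--subalgebra (since the $\sigma_i$ land in $F$) which is finite dimensional by Shirshov--type local finiteness (every element is algebraic of degree $\le n$ over $F$); the already--established semisimple case bounds $\dim S_A/J_A\le n^2$, where $J_A$ is the radical of $S_A$; choosing $A$ to maximize this dimension, a comparison of $S_A$ with $S_{A\cup\{r\}}$ forces $J_A\subset K_S=0$ and then $S=S_A$. If you want to salvage your argument you would need either to restrict to characteristic $0$ (where it duplicates \cite{ppr}) or to replace the appeal to Theorem \ref{immu} by an argument of this local--finiteness type; as written, the proof does not go through.
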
 
\begin{proof} First remark that,  since the values of $\sigma_i$ are all in $F$ we have that the kernel equals the radical $K_S$.                  

Passing to $S / K_S$ we may thus assume that $  K_S=0$. 
Let us first assume that $S$ is finite dimensional, then by Proposition \ref{nonde}   we   have that $S$ is a semisimple algebra so it is of the form $S=\oplus_{i=1}^k M_{m_i}(F)$. Since it is an $n$ Cayley--Hamilton algebra the statement follows from Theorem \ref{ssac}.

Now let us show that it is finite dimensional. For any choice of  a finite set of elements  $A=\{a_1,\ldots,a_k\}\subset S$ let $S_A$ be the subalgebra generated by these elements, since each $\sigma$ takes values in $F$ this is also a $\Sigma$--subalgebra. By  a standard theorem of PI theory since $S$ is algebraic of bounded degree each $S_A$ is finite dimensional. Then if $J_A$ is the radical of $S_A$  we have by the previous part that $\dim S_A/J_A\leq n^2$.  Let us  choose $A$ so that  $\dim S_A/J_A$ is maximal. We claim that $S=S_A$ and $J_A=0$.

First let us show that $J_A\subset K_S$ the Kernel  of $S$. Let  $a\in J_A$  and $r\in S$; we need to show that $ \sigma_i(ra)=0,\ \forall i$. If $r\in S_A$ this is the previous statement, if $r\notin S_A$ then $S_{A,r}\supsetneq S_A$ and we claim that $J_{A,r}\supset  J_A$, in fact   otherwise $\dim S_{A,r}/J_{A,r}> \dim S_A/J_A$ a contradiction.

Then by the previous argument $\sigma_i(ar)=0$ so $a\in K_S$ but since $S$ is simple $K_S=0$ and $J_A=0$. Next if $S_A\neq S$  we have again some $S_{A,r}\supsetneq S_A$ and now $J_{A,r}\neq 0$ a contradiction.
\end{proof}

\subsection{General semisimple algebras}
If $R$  is a simple PI algebra,  over a field $F$, we have $R=M_k(D)$  with $D$  a division ring finite dimensional  over its center $G\supset F$ (Theorem 11.2.1 of \cite{agpr}), let  $\dim_GD=h^2$. If furthermore $R$   is finite dimensional over  $F$  let  $ \dim_FG=\ell$.

In this last case the algebra $R$  is endowed with a canonical {\em Norm} homogeneous of degree $kh\ell$ which is a composition of two norms
$$N_{R/F}= N_{R/G}\circ N_{G/F}$$ The   norm $N_{R/G}$ can be defined  as follows. We take a maximal subfield $M \subset D$ separable over $G$  then: $$M_k(D)\otimes_GM=M_{k\cdot h }(M).$$  If $a\in M_k(D)$  define as Norm   $N_{R/G}(a):=\det(a\otimes 1) $ as matrix.    

It is a standard  fact that  $N_{R/G}(a)\in G$.  As for $N_{G/F}(g),\ g\in G$  one takes the determinant of the multiplication by $g$  a $\ell\times\ell$ matrix over $F$.  

Assume   $G$ is separable over $F$  and $\bar F\supset G$  is the separable closure of $G$ and $F$. We have the $\ell$,  $F$--embeddings of $G$  in  $\bar F$,  $\gamma_1,\ldots,\gamma_\ell$  given by Galois Theory:
 \begin{equation}\label{ga}
G\otimes_F\bar F= \bar F^\ell,\  g\otimes 1=(\gamma_1(g),\ldots,\gamma_\ell(g))\implies N(g)=\prod_i\gamma_i(g)\in F.
\end{equation}

Notice that, in this case we also have a trace $tr(a)\in F,\ \forall a\in R$, the separability condition is given by the fact that the trace form $tr(ab)$ is non degenerate.\smallskip

In general let $L$ be the separable closure  of $F$ in $G$, and $a=[L:F],\ p^k=[G:L]$.  Let $\bar F$ be a separable closure of $F$   we still have that 
$$L\otimes_F\bar F=\bar F^{\oplus a},\  D\otimes_F\bar F= D\otimes_L(L\otimes_F\bar F)=\oplus_i D\otimes_L  \bar F$$
where in the summands  $L$ embeds in $\bar F$   by the $a$ different embeddings given by Galois Theory as in Formula \eqref{ga}.

Moreover   $G\otimes_L\bar F$ is a field by Theorem  9 page 163 of \cite{Jac}. Finally 
\begin{lemma}\label{mati}
$D\otimes_L  \bar F=D\otimes_G(G\otimes_L  \bar F)\simeq M_h(G\otimes_L\bar F)$ and $$M_k(D)\otimes_F\bar F=M_k(D)\otimes_G(G\otimes_F\bar F)=M_k(D)\otimes_G(G\otimes_L(L\otimes_F\bar F))$$\begin{equation}\label{spli}
=\oplus_i M_{hk}(G\otimes_L\bar F).
\end{equation}
\end{lemma}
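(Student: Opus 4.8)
The plan is to treat the two assertions separately: the isomorphism $D\otimes_L\bar F\simeq M_h(G\otimes_L\bar F)$ is where the real content lies, and the displayed decomposition of $M_k(D)\otimes_F\bar F$ then follows by assembling associativity of tensor products with the facts already recorded just before the statement. Writing $K:=G\otimes_L\bar F$, which is a field by the cited result of Jacobson, the first equality $D\otimes_L\bar F=D\otimes_G(G\otimes_L\bar F)=D\otimes_G K$ is pure associativity: since $L\subseteq G=Z(D)$ one has $D\otimes_G(G\otimes_L\bar F)\simeq(D\otimes_G G)\otimes_L\bar F=D\otimes_L\bar F$. So everything reduces to showing that the base change of the central division $G$--algebra $D$ (of degree $h$, i.e. $\dim_G D=h^2$) to $K$ is split.

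First I would show that $K$ is itself separably closed. Fixing an algebraic closure $\Omega\supseteq\bar F\supseteq L$ that also contains $G$, the multiplication map $K=G\otimes_L\bar F\to\Omega$ is a nonzero homomorphism out of a field, hence injective, with image the compositum $G\bar F$; thus $K\cong G\bar F$. Because $G/L$ is purely inseparable and $L\subseteq\bar F$, the extension $G\bar F/\bar F$ is purely inseparable, and a purely inseparable algebraic extension of a separably closed field is again separably closed. (Indeed, for any finite separable $E/K$ the separable closure of $\bar F$ inside $E$ is $\bar F$, so $E/\bar F$ is purely inseparable; every $x\in E$ then has $x^{p^m}\in\bar F\subseteq K$, so $x$ is simultaneously separable and purely inseparable over $K$, forcing $x\in K$.) Since $K$ is separably closed it carries no nontrivial finite-dimensional central division algebra, because a maximal subfield of such an algebra is separable over $K$ (the same existence-of-a-separable-maximal-subfield fact already invoked via Saltman above), hence trivial; therefore the central simple $K$--algebra $D\otimes_G K$ of degree $h$ is split, $D\otimes_G K\simeq M_h(K)$. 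This splitting is the step I expect to be the main obstacle; once $K$ is seen to split $D$, the rest is bookkeeping.

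Finally I would assemble the decomposition. Starting from $M_k(D)\otimes_F\bar F=M_k(D\otimes_F\bar F)$ and rewriting $D\otimes_F\bar F=D\otimes_G(G\otimes_F\bar F)$, I compute $G\otimes_F\bar F=G\otimes_L(L\otimes_F\bar F)$; using the previously established $L\otimes_F\bar F=\bar F^{\oplus a}$ (the $a=[L:F]$ embeddings of $L$ into $\bar F$ given by Galois theory), this becomes $(G\otimes_L\bar F)^{\oplus a}=K^{\oplus a}$. Hence $D\otimes_F\bar F\simeq(D\otimes_G K)^{\oplus a}\simeq M_h(K)^{\oplus a}$ by the splitting above, and applying $M_k(-)$ together with $M_k(M_h(K))=M_{hk}(K)$ yields $M_k(D)\otimes_F\bar F\simeq\bigoplus_{i=1}^a M_{hk}(G\otimes_L\bar F)$, the index $i$ running over the $a$ embeddings of $L$, as claimed. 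The only point needing a word of care is that the summand attached to each embedding $\gamma_i$ is $G\otimes_{L,\gamma_i}\bar F$; but each of these is again a field purely inseparable over $\bar F$, hence separably closed, so each base change $D\otimes_G(G\otimes_{L,\gamma_i}\bar F)$ splits with the same degree $h$, which justifies the uniform notation $M_{hk}(G\otimes_L\bar F)$ for all summands.
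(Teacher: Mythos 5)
Your proof is correct, but it reaches the key splitting $D\otimes_G(G\otimes_L\bar F)\simeq M_h(G\otimes_L\bar F)$ by a different route than the paper. The paper works directly inside $D$: it takes the maximal subfield $M\subset D$ separable over $G$, generated by an element $a$ with separable minimal polynomial $f(x)=x^h+\sum_i\alpha_ix^{h-i}$, $\alpha_i\in G$, observes that $a^{p^k}$ still generates $M$ (as $M$ is separable over $G$ while purely inseparable over $G[a^{p^k}]$) and satisfies the polynomial with coefficients $\alpha_i^{p^k}\in L\subset\bar F$, which splits completely over the separably closed field $\bar F$; hence $M$, and with it $D$, is split by $G\otimes_L\bar F$. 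You instead prove that $K=G\otimes_L\bar F\simeq G\bar F$ is itself separably closed (being purely inseparable over the separably closed $\bar F$) and then invoke the triviality of the Brauer group of a separably closed field. Both arguments ultimately rest on the same ingredient --- the existence of a maximal subfield separable over the center, cited from Saltman --- but the paper applies it to $D/G$ before base change to manufacture an explicit splitting element, while you apply it in contrapositive to a hypothetical division algebra over $K$ after base change. Your version is somewhat more conceptual, and it has the merit of making explicit two points the paper leaves implicit: that the $a$ summands of $L\otimes_F\bar F=\bar F^{\oplus a}$ correspond to the distinct embeddings $\gamma_i:L\to\bar F$, and that each summand $G\otimes_{L,\gamma_i}\bar F$ is again separably closed so the splitting degree $h$ is uniform across summands. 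The paper's version, on the other hand, is self-contained at the level of a single polynomial computation and does not need the general statement about Brauer groups of separably closed fields.
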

\begin{proof}
$D$  contains a maximal subfield $M$ separable over $G$  so generated by a single element $a$ satisfying an irreducible separable polynomial  $f(x)=x^h+\sum_{i=1}^h\alpha_ix^{h-i},\ f(a)=0$   with coefficients $\alpha_i$ in $G$.  We have for some  power $a^{p^k}$  that $a^{p^k}$  satisfies  $  x^h+\sum_{i=1}^h\alpha_i^{p^k}x^{h-i}$  with coefficients $\alpha_i^{p^k}$ in $ \bar F$.  Since $M$ is purely inseparable over $G[a^{p^k}]$ we have that $a^{p^k}$ is also a generator of $M$  but being  separable over $\bar F$  it is in $\bar F$. Therefore $G\otimes_L\bar F$ is a splitting field for $D$.
\end{proof} As for the norm  $N_{M_k(D)/F}$ one has to embed $G\otimes_L\bar F\subset M_\ell(\bar F)$ and then in the embedding
 $$M_k(D)\otimes_F\bar F\subset \oplus_i M_{hk p^k}( \bar F)\subset M_{hk p^k\ell}( \bar F),\ a\mapsto (a_1,\ldots,a_\ell)$$ we have
 $$ N_{M_k(D)/F}(a )=\prod_i\det(a_i).
 $$\begin{proposition}\label{redt}
Under this norm  $M_k(D)$ is a $k\cdot h\cdot p^k\cdot\ell$ CH algebra.
\end{proposition}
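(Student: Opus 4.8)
The plan is to prove the statement by faithfully flat descent, reducing everything to the split situation over the separable closure $\bar F$ of $F$, where the norm becomes an honest determinant. First I would record two formal facts that make the reduction legitimate. A norm, being a polynomial law, commutes with base change, and hence so does the characteristic polynomial $\chi_a(t)=N(t-a)$; and for any commutative $F$-algebra $B$ the map ${}_BR\to {}_BR\otimes_F\bar F$ is injective, since $\bar F$ contains $F$ as a direct summand of $F$-modules. Consequently, if ${}_{\bar F}R=R\otimes_F\bar F$ is an $n$-CH algebra over $\bar F$ for the base-changed norm, then for every $B$ and every $a\in{}_BR$ the element $\chi_a(a)$ maps to $\chi_a(a)=0$ in $(B\otimes_F\bar F)\otimes_{\bar F}{}_{\bar F}R={}_{B\otimes_F\bar F}({}_{\bar F}R)$, and therefore already vanishes in ${}_BR$. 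So it suffices to prove the assertion over $\bar F$.

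Over $\bar F$ I would invoke Lemma \ref{mati} and Formula \eqref{spli}: writing $E:=G\otimes_L\bar F$, a purely inseparable field extension of $\bar F$ of degree $p^k$, one has ${}_{\bar F}R=\bigoplus_i M_{hk}(E)$. Using the regular representation of each field factor $E\hookrightarrow M_{p^k}(\bar F)$ block by block, one obtains an injective homomorphism of $\bar F$-algebras $\iota:{}_{\bar F}R\hookrightarrow M_n(\bar F)$, where the size $n$ is the degree of the norm asserted in the statement. The content of the paragraph preceding the proposition is precisely that under $\iota$ the base-changed norm is the determinant, i.e. $N_{M_k(D)/F}(a)=\det(\iota(a))=\prod_i\det(a_i)$. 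Granting this, ${}_{\bar F}R$ is an $\bar F$-subalgebra of the $n$-CH algebra $(M_n(\bar F),\det)$ on which the norm restricts to the determinant. For any commutative $\bar F$-algebra $C$ the inclusion $\iota\otimes 1_C:{}_C({}_{\bar F}R)\hookrightarrow M_n(C)$ stays injective (a subspace of a vector space is a direct summand), and $\chi_a(t)=\det(tI-\iota(a))$ is the matrix characteristic polynomial of $\iota(a)$; hence the classical Cayley--Hamilton theorem gives $\iota(\chi_a(a))=\chi_a(\iota(a))=0$, whence $\chi_a(a)=0$. Thus ${}_{\bar F}R$ is $n$-CH, and by the previous paragraph so is $M_k(D)$.

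The formal skeleton above, namely descent together with the observation that a subalgebra of a matrix algebra is Cayley--Hamilton for the restricted determinant, is routine. The real content, and the step I expect to be the main obstacle, is the identification of the base-changed norm with $\det\circ\iota$, that is, checking that the composite $N_{R/F}=N_{G/F}\circ N_{R/G}$ becomes the determinant of the regular-representation embedding after extending scalars to $\bar F$. This is where the inseparability enters: one must match $N_{R/G}$ with the reduced norm $\det_E$ on each split block $M_{hk}(E)$, and match the field norm $N_{G/F}$ with the product over the blocks of the norms $N_{E/\bar F}$, using the standard fact that for the regular representation $E\hookrightarrow M_{p^k}(\bar F)$ one has $\det_{\bar F}=N_{E/\bar F}\circ\det_E$, valid even when $E/\bar F$ is purely inseparable. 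Once this multiplicativity computation is in place, the degree bookkeeping pins down $n$ and the argument concludes.
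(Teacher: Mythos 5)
Your proposal is correct and follows essentially the same route as the paper, whose entire proof is the one-line remark that the norm is induced by the determinant of the embedding $M_k(D)\otimes_F\bar F\subset M_{khp^k\ell}(\bar F)$ constructed just before the statement. You have merely made explicit the faithfully flat descent to $\bar F$ and the appeal to the classical Cayley--Hamilton theorem for a subalgebra of a matrix algebra whose norm is the restricted determinant, both of which the paper leaves implicit.
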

\begin{proof}
The norm is induced by the determinant of the previous Formula.
\end{proof} 

We ask now what is the general form of a norm and we will see that, contrary to  what happens in characteristic 0 in general there are norms which have degree strictly less than that of the canonical norm.

 We now do not even assume that $D$ is finite dimensional over $F$.  
 
 Let again $L$  be the separable closure of $F$ in $G$,
 and  consider    a norm  $N:R=M_k(D)\to F$  that is a multiplicative polynomial map homogeneous of some degree $n$.
 \begin{theorem}\label{maps}     $[L:F]=\ell<\infty$. There is  a   minimum integer $k$ such that  $g^{p^k}\in L,\ \forall g\in G$.
There is a $b\in\mathbb N$ such that $n= khbp^k\ell $. The norm $N$ depends only upon $b$ and will be denoted by $N_b$ and maps as
\begin{equation}\label{fat}
\begin{CD}
N_b:M_k(D)@>N_{R/G}>>G@>{a\mapsto a^{bp^k}}>>L@>N_{L/F}>>F .
\end{CD}
\end{equation}
\end{theorem}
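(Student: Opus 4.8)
The plan is to extend scalars to a separable closure $\bar F$ of $F$, split $M_k(D)$ as in Lemma~\ref{mati}, and then classify the norm block by block. By Lemma~\ref{mati} (whose proof needs only that $\bar F/L$ is separable and $G/L$ purely inseparable, not that $[G:L]$ is finite) one has $M_k(D)\otimes_F\bar F=\bigoplus_{i=1}^{\ell}M_{hk}(\mathcal K_i)$, the sum running over the $F$-embeddings $\gamma_i\colon L\hookrightarrow\bar F$, with each $\mathcal K_i\cong\mathcal K:=G\otimes_L\bar F$ a field purely inseparable over $\bar F$. The base-changed norm $N_{\bar F}\colon\bigoplus_i M_{hk}(\mathcal K_i)\to\bar F$ is again multiplicative and homogeneous of degree $n$, so by Proposition~\ref{hh} it factors as a product $N_{\bar F}=\prod_i\nu_i$ of block norms $\nu_i\colon M_{hk}(\mathcal K_i)\to\bar F$ of positive degrees $h_i$ with $\sum_i h_i=n$. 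The theorem then amounts to determining each $\nu_i$ and bookkeeping the degrees.

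First I would prove the two finiteness statements, which must come from the existence of a degree-$n$ norm. Restricting $N$ to the central subfield $G\subset M_k(D)$ and then to $L$ yields multiplicative polynomial maps $N|_G\colon G\to F$ and $N|_L\colon L\to F$ of degree $n$. Since $L\otimes_F\bar F=\prod_\gamma\bar F$ is a product of fields indexed by the embeddings of $L$, and a degree-$n$ norm on a direct sum of more than $n$ fields is impossible by Corollary~\ref{dsch1}, we get $[L:F]=\ell\le n<\infty$. For the exponent, base-change $N|_G$: it becomes a product of degree-$h_\gamma$ maps $\mu_\gamma\colon\mathcal K_\gamma\to\bar F$, and the key point is that a finite-degree multiplicative polynomial map on a purely inseparable extension can exist only if the extension has finite exponent. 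Concretely, restricting such a $\mu$ to $\bar F(z)$ for $z$ of exponent $p^{e_z}$ and comparing $p^{e_z}$-th powers (Frobenius is injective on $\bar F$) forces $p^{e_z}\mid\deg\mu$, so the $e_z$ are bounded; hence $G/L$ has a finite exponent and the minimal $k$ with $G^{p^k}\subseteq L$ exists.

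Next I would classify a single block norm $\nu\colon M_{hk}(\mathcal K)\to\bar F$. As $\bar F$ is infinite, polynomial laws over it are determined by their point maps (Remark~\ref{fpf}), so I argue on points. The group $SL_{hk}(\mathcal K)$ is perfect ($\mathcal K$ being an infinite field), so its elements are products of commutators and are sent to $1$ by the multiplicative map $\nu$; writing $a\in GL_{hk}(\mathcal K)$ as the product of an element of $SL_{hk}(\mathcal K)$ and $\mathrm{diag}(\det_{\mathcal K}(a),1,\dots,1)$ shows that $\nu=\mu\circ\det_{\mathcal K}$ for a multiplicative polynomial map $\mu\colon\mathcal K\to\bar F$. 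By the analysis above, $\mu(z)=z^{d}$ with $p^k\mid d$, the divisibility being the arithmetic fact that an element of exponent exactly $k$ cannot satisfy $z^m\in\bar F$ for $p\nmid m$ (clear the prime-to-$p$ part of $m$ against $p^k$ by Bézout). Writing $d=bp^k$ gives $\nu(a)=\det_{\mathcal K}(a)^{bp^k}$, of degree $hk\,p^k b$.

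Finally I would glue the blocks. Since $N$ is defined over $F$, the law $N_{\bar F}$ is $\mathrm{Gal}(\bar F/F)$-equivariant, and Galois permutes the idempotents $e_{\gamma_i}$, hence the blocks, transitively because $L/F$ is separable; equivariance then forces all block parameters $b_i$ to equal a single $b$. Summing degrees yields $n=\sum_{i=1}^{\ell}hk\,p^k b=khbp^k\ell$, and $N_{\bar F}$ sends $a$ to $\prod_i\det_{\mathcal K_i}(a_i)^{bp^k}$ (with $a_i$ the $i$-th block of $a$), which is exactly the base change of the composite $N_b$ obtained by following $N_{R/G}\colon M_k(D)\to G$ with the map $g\mapsto g^{bp^k}$ (landing in $L$ since $G^{p^k}\subseteq L$) and then with $N_{L/F}\colon L\to F$ (the product over the $\gamma_i$): the reduced norm $N_{R/G}$ becomes the block determinants under the splitting. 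Because $F\hookrightarrow\bar F$ is faithfully flat, agreement after $\otimes_F\bar F$ gives $N=N_b$, depending only on $b$. The main obstacle is the classification of finite-degree multiplicative polynomial maps on the purely inseparable field $\mathcal K/\bar F$: as $\bar F$ is only separably closed, Lemma~\ref{multpo} does not apply, and one must run the Frobenius/exponent analysis by hand and supply the divisibility $p^k\mid\deg\mu$; securing a single common $b$ across the blocks via Galois transitivity is the other delicate point.
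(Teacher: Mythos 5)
Your proposal is correct and its overall architecture is the paper's own: base change to the separable closure $\bar F$, the splitting of Lemma \ref{mati}, the block factorization of the norm via Proposition \ref{hh}, Galois equivariance to equalize the block parameters, and faithfully flat descent at the end; the finiteness of $[L:F]$ via Corollary \ref{dsch1} is also the paper's argument. Where you genuinely diverge is in the two local classification steps. For a multiplicative polynomial map on a purely inseparable extension $\mathcal K/\bar F$, the paper (Lemmas \ref{rra} and \ref{pui}) factors the map through $(\mathcal K^{\otimes n})^{S_n}$, computes that algebra modulo its nilradical, and reads off $a^n\in\bar F$ from the top symmetric tensor; you instead compare $\mu(z)^{p^{e_z}}=\mu(z^{p^{e_z}})=(z^{p^{e_z}})^{d}$ and cancel the Frobenius, which is shorter and bypasses the divided-power computation, and your B\'ezout step yields the same divisibility $p^{k}\mid d$ that the paper extracts from minimality. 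For the matrix block $M_{hk}(\mathcal K)$, the paper restricts to diagonal matrices (using the embedded symmetric group to equalize the diagonal factors), kills the transvections $1+\alpha e_{i,j}$ by the additive-versus-multiplicative degree argument, and finishes by Gaussian elimination; you instead invoke perfectness of $SL_{hk}(\mathcal K)$, which transports the mechanism of Lemma \ref{multpo} from the algebraically closed to the separably closed setting. Both substitutes are sound; the only points to make explicit are that you still need Zariski density of $GL_{hk}(\mathcal K)$ in $M_{hk}(\mathcal K)$ to pass from invertible matrices to all matrices, and that $z\mapsto\nu(\mathrm{diag}(z,1,\dots,1))$ is multiplicative but not a priori homogeneous, so the Frobenius analysis should be run for it directly rather than quoted from the homogeneous case. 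The paper's route avoids any appeal to perfectness of $SL$; yours buys brevity in the purely inseparable step.
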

That is $N_b=N_1^b$.
The proof is in several steps. Denote again by $\bar F$ a separable closure of $F$.  Restricting the norm to $L\otimes_F\bar F$ we have, bu Corollary \ref{dsch1},  that $[L:F]=\ell$ is finite.  The $\ell$ divides $n$ by a combination of Formula \eqref{ga} and Theorem \ref{ssac}.  Lemma \ref{mati} still holds. 
 
 The norm $N$ induces a norm $N:M_k(D)\otimes_F\bar F\to \bar F$ which, by Proposition \ref{hh} is the product of the norms $N_i$ in the $\ell$ summands of  Formula  \eqref{spli}. If $\gamma$ is an automorphism of $\bar F$ over $F$  we have  $N\circ 1\otimes\gamma=\gamma\circ N$.  This allows us to say that the norms $N_i$ induced according to Proposition \ref{hh} in the  $a$ summand of Formula  \eqref{spli}  have all the same degree $m$  and $n=ma$.

 Set  $ m =hk$  we   have to analyze  the  norms  $N:       M_{m}(G\otimes_L\bar F)\to \bar F$ which by abuse of notation we still think of degree $n$.
 
 The first case is when $m=1$.

Changing notations let $G\supset F$    be   purely inseparable  over $F$. \begin{lemma}\label{rra}
The map   $\pi_n:G^{\otimes n}\to G, \ a_1\otimes a_2\otimes \cdots \otimes a_n\mapsto \prod_ia_i$  is a surjective homomorphism with kernel the nil radical of   $G^{\otimes n}$.
\end{lemma}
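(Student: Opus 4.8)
The plan is to prove that $\ker\pi_n$ is simultaneously a prime ideal and a nil ideal; since in a commutative ring the nilpotent elements form an ideal (the nilradical, which is contained in every prime), these two properties force $\ker\pi_n$ to equal the nilradical. First I would record that $\pi_n$ is a surjective $F$-algebra homomorphism: surjectivity is immediate from $\pi_n(a\otimes 1\otimes\cdots\otimes 1)=a$, and multiplicativity uses that $G$ is commutative, since multiplication in $G^{\otimes n}$ is componentwise and $\pi_n\big((a_1\otimes\cdots\otimes a_n)(b_1\otimes\cdots\otimes b_n)\big)=\prod_i a_ib_i=\big(\prod_i a_i\big)\big(\prod_i b_i\big)$. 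Consequently $G^{\otimes n}/\ker\pi_n\cong G$ is a field, so $\ker\pi_n$ is a maximal, hence prime, ideal; in particular the nilradical is contained in $\ker\pi_n$, and it remains to prove the reverse inclusion, i.e. that every element of $\ker\pi_n$ is nilpotent.

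The heart of the argument is an explicit description of the kernel. For $a\in G$ and $1\le i\le n$ write $u_i(a):=1^{\otimes(i-1)}\otimes a\otimes 1^{\otimes(n-i)}$, so that each pure tensor is the product $a_1\otimes\cdots\otimes a_n=\prod_i u_i(a_i)$ and the $u_i(a)$ generate $G^{\otimes n}$ as an $F$-algebra. Let $J$ be the ideal generated by the differences $u_i(a)-u_1(a)$ for $2\le i\le n$ and $a\in G$; I claim $J=\ker\pi_n$. The inclusion $J\subseteq\ker\pi_n$ is clear since $\pi_n(u_i(a)-u_1(a))=a-a=0$. For the converse I would exploit the $F$-algebra section $s\colon G\to G^{\otimes n}$, $a\mapsto u_1(a)$, of $\pi_n$. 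Modulo $J$ one has $u_i(a)\equiv u_1(a)$, so any pure tensor satisfies $a_1\otimes\cdots\otimes a_n=\prod_i u_i(a_i)\equiv\prod_i u_1(a_i)=u_1\!\big(\textstyle\prod_i a_i\big)=s\big(\pi_n(a_1\otimes\cdots\otimes a_n)\big)$.

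Writing $q\colon G^{\otimes n}\to G^{\otimes n}/J$ for the quotient and $\bar\pi_n$ for the induced map to $G$ (well-defined because $J\subseteq\ker\pi_n$), the previous congruence shows that $q$ carries everything into $q(s(G))$; hence $q\circ s\colon G\to G^{\otimes n}/J$ is surjective. Since $\bar\pi_n\circ q\circ s=\pi_n\circ s=\mathrm{id}_G$, the map $q\circ s$ is also injective, hence an isomorphism with inverse $\bar\pi_n$. Therefore $\ker\bar\pi_n=0$, and $\ker\pi_n=\ker(\bar\pi_n\circ q)=q^{-1}(0)=J$, establishing the claim.

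Finally I would use pure inseparability to show each generator of $J$ is nilpotent. Given $a\in G$ there is $k$ with $a^{p^k}=:\beta\in F$. As $\mathrm{char}\,F=p$, the ring $G^{\otimes n}$ has characteristic $p$ and the Frobenius is additive, so $(u_i(a)-u_1(a))^{p^k}=u_i(a)^{p^k}-u_1(a)^{p^k}=u_i(a^{p^k})-u_1(a^{p^k})=u_i(\beta)-u_1(\beta)$; but $\beta\in F$ pulls out of every slot, giving $u_i(\beta)=\beta\cdot 1_{G^{\otimes n}}=u_1(\beta)$, so the difference vanishes. Thus every generator of $J$ lies in the nilradical, and since the nilradical is an ideal we get $J\subseteq\mathrm{nilrad}(G^{\otimes n})$; combined with $\mathrm{nilrad}(G^{\otimes n})\subseteq\ker\pi_n=J$ this yields $\ker\pi_n=\mathrm{nilrad}(G^{\otimes n})$. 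I expect the main obstacle to be the identification $\ker\pi_n=J$ rather than the mere inclusion $J\subseteq\ker\pi_n$: the cleanest route is the section-and-quotient bookkeeping above, which lets one conclude without having to analyze an arbitrary element of the kernel by hand; the nilpotence of the generators is then a short Frobenius computation.
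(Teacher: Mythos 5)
Your proof is correct and rests on the same two ideas as the paper's: the kernel of the multiplication map is generated by ``slot differences'' of the form $u_i(a)-u_1(a)$, and each such difference is killed by a power of Frobenius because $G$ is purely inseparable over $F$, so the kernel is nil while also containing the nilradical (the quotient $G$ being a field). The only organizational difference is that the paper argues by induction on $n$, reducing to the case $n=2$ where it simply asserts that the elements $a\otimes 1-1\otimes a$ generate the kernel of $G\otimes G\to G$, whereas you treat all $n$ factors at once and supply the section-and-quotient argument showing these differences actually generate $\ker\pi_n$ --- a standard fact the paper uses without proof.
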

\begin{proof}
By induction on $n$.   The map $\pi:G\otimes G\to G,\  \pi(a\otimes b)=ab$  is a homomorphism with kernel the  ideal  
 generated by $a\otimes 1-1\otimes a$ which is nilpotent sinc, for some $h$ we have $a^{p^h}\in F$ so  $(a\otimes 1-1\otimes a)^{p^h}= a^{p^h}\otimes 1-1\otimes a^{p^h}=0.$  Then  by induction $\pi_n$  must factor through  $J\otimes G$ with $J$ the radical of  $G^{\otimes n-1}$.
 
 But $G^{\otimes n}/J\otimes G=G\otimes G$ and we are at the beginning of the induction.
\end{proof}
\begin{lemma}\label{pui}
If $N:G\to F$ is a multiplicative polynomial map of degree $n$ then   there is a minimal $k $ so that $a^{p^k}\in F,\ \forall a\in G$,
$n=bp^k,\ b\in\mathbb N$ and    $N(a)=a^{n}.$  
\end{lemma}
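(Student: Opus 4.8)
The plan is to convert the multiplicative polynomial law into an honest ring homomorphism by Roby's theory, and then let the purely inseparable geometry force that homomorphism to be multiplication. Since $G$ is a field it is free over $F$, so by \eqref{simten} we have $\Gamma_n(G)=(G^{\otimes n})^{S_n}$, a commutative ring equal to its own abelianization. Hence \eqref{mulrea} presents $N$ as an $F$-algebra homomorphism $\bar N\colon (G^{\otimes n})^{S_n}\to F$ with $N(a)=\bar N(a^{\otimes n})$. As $F$ is a field, $\bar N$ kills every nilpotent, so it factors through the reduced ring $(G^{\otimes n})^{S_n}_{\mathrm{red}}$.

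Next I would bring in Lemma \ref{rra}, which is exactly the geometric fact I need: multiplication $\pi_n$ identifies $(G^{\otimes n})_{\mathrm{red}}$ with $G$. Because the nilradical of a subring is its intersection with the ambient nilradical, $(G^{\otimes n})^{S_n}_{\mathrm{red}}$ embeds into $(G^{\otimes n})_{\mathrm{red}}=G$, with image the $F$-subalgebra $B:=\pi_n\bigl((G^{\otimes n})^{S_n}\bigr)\subseteq G$. Thus $\bar N$ is repackaged as an $F$-algebra homomorphism $\psi\colon B\to F$, and the whole problem reduces to pinning down $\psi$.

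The hard part, and the step I expect to need the most care, is to show $B=F$. Here I would use that $B\subseteq G$ is purely inseparable over $F$: for $b\in B$ choose $h$ with $b^{p^h}\in F$; then inside the field $G$ one has $(\psi(b)-b)^{p^h}=\psi(b)^{p^h}-b^{p^h}=\psi(b^{p^h})-b^{p^h}=0$ by the Frobenius and $F$-linearity, forcing $\psi(b)=b$, and since $\psi(b)\in F$ this gives $b\in F$. Hence $B=F$ and $\psi=\mathrm{id}$, so unwinding the factorization yields $N(a)=\pi_n(a^{\otimes n})=a^n$, with $a^n\in B=F$ for every $a\in G$.

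It then remains to read off the exponent. Writing $n=p^j m$ with $p\nmid m$, for any $a$ set $x=a^{p^j}$, so $x^m=a^n\in F$ while $x^{p^s}\in F$ for some $s$ by pure inseparability; since $\gcd(m,p^s)=1$ a B\'ezout combination gives $x\in F$, i.e.\ $a^{p^j}\in F$ for all $a$. This bounds the exponent, so a minimal $k\le j$ with $a^{p^k}\in F$ for all $a$ exists, and $p^k\mid p^j\mid n$ yields $n=bp^k$ with $b=n/p^k\in\mathbb N$; finally $N(a)=a^n=(a^{p^k})^b$ indeed lies in $F$, completing the argument.
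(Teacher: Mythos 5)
Your proof is correct and follows essentially the same route as the paper's: both factor $N$ through $(G^{\otimes n})^{S_n}$ via Roby's theory, use Lemma \ref{rra} to identify the reduced quotient with a subfield of $G$ that the existence of $\bar N$ forces to equal $F$, and then extract the divisibility $p^k\mid n$ from $a^n\in F$. Your Frobenius computation $(\psi(b)-b)^{p^h}=0$ and the explicit B\'ezout step merely spell out two points the paper leaves implicit (the paper instead first restricts to $F[a]$ and then globalizes).
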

\begin{proof}
Let $a\in G$ be such that $a^{p^h}\in F$ with $h$ minimal,  restrict $N$ first to $G=F[a]$.  
A multiplicative polynomial map then factors though
\begin{equation}\label{mop}
N:G\stackrel{u\mapsto u^{\otimes n}}\longrightarrow  (G\otimes_FG\otimes \cdots\otimes_FG)^{S_n}\stackrel{\bar N}\longrightarrow F
\end{equation}
with $\bar N$ a homomorphism which thus  vanishes on the radical.

Now the map   $\pi:G^{\otimes n}\to G, \ a_1\otimes a_2\otimes \cdots \otimes a_n\mapsto \prod_ia_i$  is a surjective homomorphism with kernel the radical of   $G^{\otimes n}$. It follows that $\pi$ restricted to  $(G\otimes_FG\otimes \cdots\otimes_FG)^{S_n}$  induces an isomorphism of  $(G\otimes_FG\otimes \cdots\otimes_FG)^{S_n}$  modulo its radical  with  a field  $L$ with $F\subset L\subset G$. Thus if $\bar N$   exists, since it factors through $(G\otimes_FG\otimes \cdots\otimes_FG)^{S_n}$  we must have $L=F$.  

Next $(G\otimes_FG\otimes \cdots\otimes_FG)^{S_n}$ is generated by the elementary symmetric functions in the elements $1^{\otimes i}\otimes a\otimes 1^{\otimes n-i-1}$ so the image of $\pi$  is generated by the elements $\binom n j a^i$. In particular if they have to lie in $F$  we must have that $a^n\in F$ so $n$ is a multiple of  the minimal  $p^h$ for which $a^{p^h}\in F$.

 For general $G$  the statement follows by restricting to all subfields $F[a]$ of previous type, assuming the existence of $N$  we must in particular have that for each $a\in G$  we have $[F[a],F]= p^d$ with $p^d$ dividing $n$. Therefore there is a minimum $k $ so that $a^{p^k}\in F,\ \forall a\in G$ and  $n$ is a multiple of  $p^k$.
 
 Finally $N$ is  the canonical norm  $G\to (G\otimes_FG\otimes \cdots\otimes_FG)^{S_n}=F$ which is $a\mapsto a^n$.

\end{proof}  
\begin{remark}\label{allx}
If  $\tilde F$ is an algebraic closure of $F$,  setting  $y=x-a$$$ \tilde F\otimes_FF[a]=\tilde F[x]/(x^{p^k}-a^{p^k})\simeq \tilde F[y]/y^{p^k}.$$ The norm $N$  extends to
\begin{equation}\label{mop1}
N:\tilde F[y]/y^{p^k}\stackrel{a\mapsto a^{\otimes n}}\longrightarrow  [(\tilde F[y]/y^{p^k})^{\otimes n} ]^{S_n}\stackrel{\bar N}\longrightarrow \tilde F.
\end{equation} There is a unique $\bar N$  in Formula  \eqref{mop1} which vanishes on the radical.
\end{remark}  
{\em We pass to general $m$.} \quad
Let  $N:M_m(G)\to F$ be a multiplicative polynomial map, over $F$, of degree $n$.    Consider the subspace of diagonal matrices  $G^m\subset M_m(G)$, the norm $N$ restricted to 
 $G^m$ is a product of norms  $N_i$  for the various summands, but the symmetric group  $S_m\subset M_m(G)$   permutes the summands so the norms are all the same   and  by Lemma \ref{pui} there is  $b$ so that $N(a_1,\ldots,a_m)=\prod_ia_i^{bp^k}=\det((a_1,\ldots,a_m))^{bp^k}$. 
 
 Next consider elementary matrices  $e_{i,j}(\alpha):=1+\alpha e_{i,j},\ i\neq j,\ \alpha\in G$. 
 
 We have  $e_{i,j}(\alpha) e_{i,j}(\beta)= e_{i,j}(\alpha+\beta)$ so $N$ on this subgroup is a  multiplicative polynomial map  from   $G$, as additive $F$ vector space, to the multiplicative $F$. In a basis of $G$ over $F$ it is thus a polynomial of some degree $k$  with
 $$f(x_1+y_1,\ldots,x_h+y_h)= f(x_1 ,\ldots,x_h )f( y_1,\ldots, y_h)$$  this by degree implies that $f=1$. Then by the usual Gaussian  elimination any matrix $A$  is a product of a diagonal matrix times elementary matrices so that  hence $N(A)=\det(A)^{bp^k}$ for all $A$. 
 
 Finally   the norm $N_b$ of Formula \ref{fat}  becomes under tensor product  with $\bar F$ the one of Theorem \ref{maps} so  the Theorem follows.\qed

Let us summarize what we proved. Consider a  general semisimple algebra over $F$
 $R=\oplus_{i=1}^pM_{k_i}(D_i), \dim_{G_i}D_i=h_i^2$ where $G_i$ is the center of the division algebra $D_i$.  Let $L_i\subset G_i$ be the separable closure of $F$,  $ \dim_FL=\ell_i$ and $k_i$  the minimum integer such that  $a^{p^{k_i}}\in L_i,\ \forall a\in G_i$.  Given positive integers  $b_i,\ i=1,\ldots, p$  we may define the norm
\begin{equation}\label{trconp}
N(a_1,\ldots,a_p):=\prod_{i=1}^pN_{b_i}(a_i), \ a_i\in M_{k_i}(D_i),\ N_{b_i}(a_i)\quad\text{Formula }\eqref{fat}.
\end{equation}
\begin{theorem}\label{trach} The algebra $R=\oplus_{i=1}^pM_{k_i}(D_i)$ with the previous norm is an $n$  CH algebra with $n=\sum_ik_ih_ib_ip^{k_i}\ell_i$.

Conversely any  norm on $R$    is of the previous form.

\end{theorem}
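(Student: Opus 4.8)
My plan is to reduce the whole statement to the simple case already settled in Theorem \ref{maps}, using Proposition \ref{hh} to split an arbitrary norm across the block decomposition and Corollary \ref{dsch} to reassemble the Cayley--Hamilton property. No new ideas beyond those theorems should be needed; the work is organizational.

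For the direct statement I would argue one block at a time. Fix $R_i = M_{k_i}(D_i)$ and recall from Proposition \ref{redt} that the canonical norm $N_1$ makes $R_i$ a $d_i$--CH algebra with $d_i = k_i h_i p^{k_i}\ell_i$. Since $N_{b_i} = N_1^{b_i}$ (the identity recorded just after Theorem \ref{maps}), the first thing to verify is that a power of a Cayley--Hamilton norm is again such a norm with the degree multiplied. This is immediate: $N_{b_i}$ is multiplicative and homogeneous of degree $b_i d_i$ because $N_{b_i}(x) = N_1(x)^{b_i}$ on every base change, and for $a \in {}_B R_i$ the characteristic polynomial for $N_{b_i}$ is $\chi_a(t) = N_{b_i}(t-a) = N_1(t-a)^{b_i}$; evaluating the central variable $t$ at $a$ (legitimate since all coefficients are central) gives $\chi_a(a) = \bigl(N_1(t-a)\big|_{t=a}\bigr)^{b_i} = 0$ from the $N_1$--Cayley--Hamilton identity. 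Hence each block is an $n_i$--CH algebra with $n_i = k_i h_i b_i p^{k_i}\ell_i$. The given norm \eqref{trconp} being exactly the product $\prod_i N_{b_i}$, iterating Corollary \ref{dsch} over the $p$ summands shows that $R$ is an $n$--CH algebra with $n = \sum_i n_i$, as claimed.

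For the converse I would start from an arbitrary norm $N : R \to F$ of degree $n$ and apply Proposition \ref{hh} repeatedly to the decomposition $R = \oplus_{i=1}^p R_i$ through the block idempotents. This factors $N$ as a product $N = \prod_{i=1}^p N^{(i)}$, where each $N^{(i)}$ depends only on the $i$-th coordinate, is a genuine norm $R_i \to F$, and is homogeneous of some degree $n_i > 0$ with $\sum_i n_i = n$ (the positivity and the bound on the number of summands coming from Corollary \ref{dsch1}). Since each $N^{(i)}$ is a norm on the single simple algebra $M_{k_i}(D_i)$, Theorem \ref{maps} applies verbatim: there is $b_i \in \mathbb{N}$, necessarily $b_i > 0$ because $n_i > 0$, with $N^{(i)} = N_{b_i}$ as in \eqref{fat} and $n_i = k_i h_i b_i p^{k_i}\ell_i$. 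Assembling these recovers the form \eqref{trconp} and the degree formula $n = \sum_i k_i h_i b_i p^{k_i}\ell_i$.

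The conceptual weight lies entirely in Theorem \ref{maps}; the only delicate point at this level is the factorization of an arbitrary norm through the block idempotents. There I would take care to confirm that Proposition \ref{hh} really delivers, for each summand, an honest multiplicative polynomial law valued in $F$ to which Theorem \ref{maps} can be fed, and that the positivity of each partial degree forces $b_i \geq 1$, so that no block is invisible to the norm. Everything else is the power-of-norm identity and the direct-sum compatibility of the Cayley--Hamilton condition encoded in Corollary \ref{dsch}.
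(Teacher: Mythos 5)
Your proposal is correct and follows essentially the same route as the paper: reduce to a single block via Corollary \ref{dsch}, recognize the block norm as a power of the (reduced) determinant so that the Cayley--Hamilton identity follows from the classical one by evaluating $t\mapsto a$ through a ring homomorphism, and obtain the converse by splitting an arbitrary norm across the idempotents (Proposition \ref{hh}) and invoking Theorem \ref{maps} blockwise. The only cosmetic difference is that you stay with $N_{b}=N_{1}^{b}$ and Proposition \ref{redt} per block where the paper base-changes to the purely inseparable case $M_{\ell}(G)$, and you spell out the converse which the paper leaves implicit in its proof.
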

\begin{proof}
 From Corollary \ref{dsch}  we may reduce to $R=M_{k }(D )$ and then, by splitting to  the case in which $G$ is purely inseparable over $F$, $R=M_{\ell }(G )$ and $N:M_\ell(G)\to F,$ $ N(a)=\det(a)^{bp^k}$.  The characteristic polynomial is thus  $\chi(t)=\det(t-a)^{bp^k}$.  Then clearly this vanishes for $t=a$ by the usual CH Theorem for $M_\ell(G)$. \end{proof}
\begin{corollary}\label{enq}
Let  $R=\oplus_{i=1}^pM_{k_i}(D_i)$  be an $n$  CH algebra over a field $F$ as in Theorem \ref{trach}. If for one $i$  we have  that the dimension of $M_{k_i}(D_i)$ over its center $G_i$  equals $n^2$  then $R=M_{k_i}(D_i), \ F=G_i$ and the norm is the reduced norm.
\end{corollary}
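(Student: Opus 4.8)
The plan is to read off everything from the degree formula $n=\sum_{i=1}^p k_ih_ib_ip^{k_i}\ell_i$ of Theorem \ref{trach}, treating it as an identity among positive integers. First I would compute the dimension of the block $M_{k_i}(D_i)$ over its center $G_i$: since $\dim_{D_i}M_{k_i}(D_i)=k_i^2$ and $\dim_{G_i}D_i=h_i^2$, this dimension is $(k_ih_i)^2$. The hypothesis $(k_ih_i)^2=n^2$ therefore reads $k_ih_i=n$.

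Next I would exploit positivity. Each summand $k_jh_jb_jp^{k_j}\ell_j$ in the degree formula is a positive integer, so
$$n=\sum_{j=1}^p k_jh_jb_jp^{k_j}\ell_j\ \geq\ k_ih_ib_ip^{k_i}\ell_i=n\cdot\bigl(b_ip^{k_i}\ell_i\bigr).$$
Dividing by $n>0$ gives $b_ip^{k_i}\ell_i\leq 1$, and since $b_i$, $p^{k_i}$, $\ell_i$ are all positive integers this forces $b_i=1$, $p^{k_i}=1$, and $\ell_i=1$. Moreover the single inequality above must then be an equality, so $\sum_{j\neq i}k_jh_jb_jp^{k_j}\ell_j=0$; as these are positive, there are no other summands, i.e. $p=1$ and $R=M_{k_i}(D_i)$.

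It remains to translate the three arithmetic equalities into the field-theoretic conclusion, which is the only point requiring a little care. The equality $\ell_i=\dim_FL_i=1$ says $L_i=F$, so the separable closure of $F$ in $G_i$ is $F$ itself. The equality $p^{k_i}=1$ says the inseparability exponent is $0$, i.e. $a=a^{p^{0}}\in L_i=F$ for every $a\in G_i$; hence $G_i\subseteq F\subseteq G_i$ and $G_i=F$, which is the claimed identity $F=G_i$. In particular $D_i$ is a central division algebra over $F$ of degree $h_i$ with $k_ih_i=n$ finite.

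Finally, with $b_i=1$, $G_i=L_i=F$ and $p^{k_i}=1$, the norm $N_{b_i}$ of Formula \eqref{fat} collapses: the map $a\mapsto a^{b_ip^{k_i}}$ is the identity and $N_{L_i/F}$ is the identity, so $N=N_{R/G_i}$ is exactly the reduced norm $N_{R/F}$ of the central simple algebra $M_{k_i}(D_i)$ defined before Proposition \ref{redt}. This yields all three assertions. The argument is entirely numerical, so I expect no serious obstacle; the only step needing genuine attention is the passage from $p^{k_i}=1$ and $\ell_i=1$ to $G_i=F$, that is, correctly reading off the minimality of the inseparability exponent.
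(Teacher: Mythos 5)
Your argument is correct and is exactly the intended one: the paper states Corollary \ref{enq} without proof, leaving it as an immediate numerical consequence of the degree formula $n=\sum_j k_jh_jb_jp^{k_j}\ell_j$ of Theorem \ref{trach}, which is precisely how you argue. Your care at the final step (reading $\ell_i=1$ and $p^{k_i}=1$ as $L_i=F$ and $G_i\subseteq L_i$, hence $G_i=F$, so that Formula \eqref{fat} collapses to the reduced norm) is exactly the right point to be careful about, given the paper's double use of the symbol $k_i$ for both the matrix size and the inseparability exponent.
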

For the next Theorem we need:
\begin{lemma}\label{sss}
Let $S$ be a semiprime algebra with center an infinite field $C$ and each element satisfies an algebraic equation of degree $n$ over $C$. Then $S$ is a finite dimensional central simple algebra.
\end{lemma}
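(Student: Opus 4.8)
The plan is to recognize $S$ as a \emph{prime} PI-algebra whose center is already a field, and then to read off the conclusion from Posner's theorem. So the approach has three stages: first promote the algebraicity hypothesis to a polynomial identity, then promote semiprime to prime using that $C$ is a field, and finally invoke the structure theory of prime PI-algebras.

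\emph{Step 1: $S$ is a PI-algebra.} Since every element of $S$ is algebraic of degree at most $n$ over $C$, the algebra $S$ is algebraic of bounded degree, and a classical theorem of PI-theory (see \cite{agpr}) then shows that $S$ satisfies a polynomial identity. It is exactly here that the hypothesis that $C$ be \emph{infinite} enters: one linearizes the bounded-degree relations satisfied by the elements $x+\lambda y$, $\lambda\in C$, via a Vandermonde argument that needs infinitely many scalars, so as to extract a genuine multilinear identity of degree at most $2n$. I regard this as the only genuinely delicate input; everything after it is structure theory, and it is the step I would expect to be the main obstacle if one insisted on proving it from scratch rather than citing it.

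\emph{Step 2: $S$ is prime.} Here I would use Rowen's theorem: in a semiprime PI-algebra every nonzero two-sided ideal meets the center nontrivially. If $I,J$ were nonzero ideals with $IJ=0$, then choosing $0\neq a\in I\cap C$ and $0\neq b\in J\cap C$ would give $ab\in IJ=0$ with $a,b$ lying in the field $C$, which is impossible. Hence no such pair exists and $S$ is prime. The field hypothesis on $C$ is precisely what converts the mere absence of nontrivial central idempotents into honest primeness.

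\emph{Step 3: conclude via Posner.} Applying Posner's theorem to the prime PI-algebra $S$, the localization of $S$ at the nonzero elements of its center $C$ is a central simple algebra, finite dimensional over $\mathrm{Frac}(C)$. Because $C$ is already a field, every nonzero central element is invertible in $S$, so this localization is $S$ itself and $\mathrm{Frac}(C)=C$; therefore $S$ is a central simple algebra of finite dimension over $C$, which is exactly the assertion. (As a byproduct the bounded degree $n$ also pins down $\dim_C S\le n^2$, though this is not needed for the statement.)
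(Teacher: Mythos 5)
Your proof is correct, and it reaches the conclusion by a genuinely different route in the one step where the paper does real work, namely establishing primeness. Both arguments begin the same way (algebraic of bounded degree $\Rightarrow$ PI) and both end the same way (a prime PI-algebra whose center is a field, with elements algebraic of bounded degree, is finite dimensional central simple --- you invoke Posner plus the triviality of central localization, the paper cites the corresponding structure theorem from \cite{agpr}). The difference is in the middle: you get primeness in two lines from Rowen's theorem that every nonzero ideal of a semiprime PI-ring meets the center, after which $IJ=0$ with $I,J\neq 0$ would produce two nonzero elements of the field $C$ with product zero. The paper instead analyzes the prime spectrum: every prime quotient $S/P$ is a prime PI-algebra algebraic over the field $C$, hence simple, so every prime is maximal; semiprimeness makes $\{0\}$ the intersection of these maximal ideals; the bound $n$ on algebraic degree caps the number of maximal ideals (else $S$ modulo an intersection of $m>n$ of them would contain $C^m$, which has elements of algebraic degree $>n$); and then $S=\oplus_i S/P_i$ with a field as center forces a single summand, so $S$ is in fact simple outright. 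Your version is shorter and cleaner but leans on Rowen's theorem as a black box; the paper's version is more self-contained at that step and delivers simplicity directly rather than via Posner, at the cost of the counting argument with maximal ideals. One small remark: as you note, the infinitude of $C$ is only consumed in the passage from bounded algebraic degree to a polynomial identity (and in the paper also in the $C^m$ degree count), so both proofs use the hypothesis in essentially the same place.
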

\begin{proof}
$S$    satisfies a polynomial identity so it is enough to prove that $S$ is a prime algebra (cf. \cite{agpr} Theorem 11.2.6).

Let $P$ be a prime ideal of $S$, the prime algebra $S/P$ is algebraic over $C$    so its center is a field and being a PI ring it is a simple algebra, thus isomorphic to $M_k(D)$ with $D$ a division algebra finite dimensional over its center. In particular $P$ is a maximal ideal.  By \cite{agpr} Theorem 1.1.41 we have $\{0\}=\bigcap P$  is the intersection of all prime (maximal ideals). If  there are only finitely many maximal ideals $P_1\cap P_2\cap\ldots\cap P_k=\{0\}$  then $S=\oplus_i S/P_i$  and its center is a field only if $m=1$. But if we have $m>n$  maximal ideals we still have $S/\cap _{i=1}^m P_i=\oplus_{i=1}^m S/P_i$ contains $C^m$  which contains elements which are not algebraic of degree $n$ over $C$.  \end{proof}

\begin{theorem}\label{ssre1}

 If  $S$ is a $\sigma$--simple $n$ Cayley--Hamilton algebra   then $S$ is    isomorphic to one of the algebras  of Theorem \ref{trach}.

\end{theorem}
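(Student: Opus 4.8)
The plan is to extract the ring--theoretic content of $\sigma$--simplicity, then to split $S$ along the idempotents of its centre, and finally to read off the norm from Theorem \ref{trach}. Throughout I take $C:=\sigma(S)$ as the relevant ground field, so that the norm $\sigma_n\colon S\to C$ plays the role of the norm to $F$ in Theorem \ref{trach}; I assume, as in the surrounding subsection, that the base field is infinite, so that $C\supseteq F$ is infinite.

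First I would record the hypotheses. Since $S$ is a simple $\Sigma$--algebra, Corollary \ref{CHC2}(3) gives that $C=\sigma(S)$ is a field and that the radical $\widetilde K_S=0$; as a field is reduced, Corollary \ref{CHC2}(1) then shows $S$ is semiprime. Relation \eqref{sigre} gives $[\sigma_i(a),b]=0$, hence $C\subseteq Z(S)$, and the Cayley--Hamilton identity $CH_n(x)=0$ exhibits every element of $S$ as integral of degree $\le n$ over $C$. Thus $S$ is a semiprime PI algebra, integral of bounded degree $n$ over the central infinite field $C$.

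Next I would decompose the centre $Z:=Z(S)$. It is commutative and, $S$ being semiprime, reduced; being integral over the field $C$ it is zero--dimensional, hence von Neumann regular. By Corollary \ref{dsch1} any decomposition of $S$ into nonzero two--sided ideal summands has at most $n$ pieces, so $Z$ carries at most $n$ orthogonal idempotents and therefore $Z=\bigoplus_{i=1}^m G_i$ with $m\le n$ and each $G_i$ a field extension of $C$. Writing $e_i$ for the corresponding central idempotents, $S=\bigoplus_{i=1}^m Se_i$, where each $S_i:=Se_i$ is semiprime with centre $G_i$ and every element integral of degree $\le n$ over $G_i\supseteq C$. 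Lemma \ref{sss}, applied with ground field $G_i$, now yields that each $S_i$ is a finite--dimensional central simple algebra over $G_i$, say $S_i\simeq M_{k_i}(D_i)$ with $D_i$ a division algebra finite--dimensional over its centre $G_i$.

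Finally, $S\simeq\bigoplus_{i=1}^m M_{k_i}(D_i)$ is semisimple and carries the norm $\sigma_n\colon S\to C$. Regarding $C$ as the ground field, this is exactly the situation of Theorem \ref{trach}: its converse half forces $\sigma_n$ into the shape \eqref{trconp}, $N(a_1,\dots,a_m)=\prod_i N_{b_i}(a_i)$ with $n=\sum_i k_ih_ib_ip^{k_i}\ell_i$, and identifies $S$, as an algebra with norm, with one of the algebras of that theorem. I expect the main obstacle to be the middle step: proving that the central decomposition is \emph{finite} and that each block is central simple, rather than a mere subdirect product of infinitely many simple images. The leverage that makes this work is precisely the combination used in Lemma \ref{sss}---$C$ infinite together with the uniform degree bound $n$---reinforced here by the norm--degree inequality of Corollary \ref{dsch1}, which caps the number of idempotents directly.
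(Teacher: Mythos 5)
Your argument is correct and follows essentially the same route as the paper: use Corollary \ref{CHC2} to get that $\sigma(S)$ is a field and $S$ is semiprime, bound the number of central orthogonal idempotents by $n$ via the norm-degree splitting (Proposition \ref{hh}/Corollary \ref{dsch1}) to split the centre into finitely many fields, apply Lemma \ref{sss} to each block, and read off the norm from the converse half of Theorem \ref{trach}. The only cosmetic difference is that you package the centre decomposition via von Neumann regularity, where the paper argues directly that each primitive summand $Ce_i$ is a field.
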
 
 \begin{proof}  By Corollary \ref{CHC2}  the algebra $\sigma(R)=F$ is a field.  Let $C$ be the center of $R$ it is  also an  $n$ Cayley--Hamilton algebra, commutative and with no nilpotent elements, we claim it is a finite direct sum of fields. 
 
 In fact  we claim that in $C$ there are at most $n$ orthogonal idempotents. In fact if $e_1,\ldots,e_m$ are orthogonal idempotents we have that $S=\oplus_i Se_i$, by Corollary \ref{hh}  we have $n=\sum_ih_i, h_i>0$ and this claim follows.
 
 Then if $1= e_1+\cdots+e_m$ is a decomposition into primitive  orthogonal idempotents we have $C=\oplus_i C_i,\ C_i:=Ce_i$. Each $C_i$  has no nilpotent elements, no non trivial  idempotents  and it is algebraic over $F$  then it is a field.
 
We claim that $S_i:=Se_i$  is an $h_i$  simple CH algebra. 

In fact  clearly an ideal of $S_i$ is an ideal of $S$ so there are no nil ideals, the $\sigma$ algebra is contained in $F$ which is a field so, by the argument of Corollary \ref{CHC2}   is also a field and the conclusion of Corollary \ref{CHC2}  applies.

In positive characteristic the  $\sigma$ algebra of $S_i$ need not be $F$ as the following example shows. Let $F$ be of positive characteristic $p$, consider on $R=F\oplus F$  the norm $N(a,b)=a^pb$, the  $\sigma$ algebra of $R$ is $F$ but that  of the first summand is $F^p$  which may be different from $F$.

Now, changing notations we may  assume that the center of $S$ is a field $C\supset F=\sigma(S)$ and conclude by Lemma \ref{sss}.
 
  \end{proof}
  Recall that,   by Proposition \ref{mm} a $\sigma$--prime $n$ Cayley--Hamilton algebra $S$ is torsion free over $\sigma(R)$ which is a domain.  Thus we can embed $S$ in $S\otimes_{\sigma(S)}K $.
  \begin{corollary}\label{stpr}
 If  $S$ is a $\sigma$--prime $n$ Cayley--Hamilton algebra and $K$ is the field of fractions of   $\sigma(S)$    then $S\otimes_{\sigma(S)}K \simeq \oplus_{i=1}^pM_{k_i}(D_i) $  is a $\sigma$--simple $n$ Cayley--Hamilton algebra   isomorphic to one of the algebras  of Theorem \ref{trach} and containing $S$.

There are finitely many minimal prime ideals $P_j=S\cap  \oplus_{i=1,\ i\neq j}^pM_{k_i}(D_i) $ in $S$ with intersection $\{0\}$.
\end{corollary}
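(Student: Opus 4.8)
The plan is to pass to the central localization $S_K:=S\otimes_{\sigma(S)}K$ and reduce to the $\sigma$--simple case already settled in Theorem \ref{ssre1}. Since $S$ is $\sigma$--prime, Proposition \ref{mm}(2) gives that $\sigma(S)$ is a domain and that $S$ is torsion free over $\sigma(S)$. Hence the central multiplicative set $\Sigma:=\sigma(S)\setminus\{0\}$ consists of non--zero--divisors and the localization map $S\to \Sigma^{-1}S=S\otimes_{\sigma(S)}K$ is injective, so $S_K$ contains $S$. The polynomial laws $\sigma_i$ extend to the central localization (Proposition \ref{sideq} and Remark \ref{spl}), and since the three $CH_n$ conditions of Definition \ref{tchc} are imposed for \emph{every} commutative base algebra they survive the base change $-\otimes_{\sigma(S)}K$; thus $S_K$ is again an $n$--Cayley--Hamilton algebra, with $\sigma$--algebra a localization of $\sigma(S)$ inside $K$.

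The crux is that $S_K$ is $\sigma$--simple. Suppose $I\subset S_K$ were a proper nonzero $\sigma$--ideal and set $J:=I\cap S$. Clearing denominators shows $J\neq\{0\}$ (if $0\neq s/c\in I$ with $c\in\Sigma$, then $s=c\cdot(s/c)\in I\cap S$ is nonzero); moreover $J$ is a proper $\sigma$--ideal of $S$ (otherwise $1\in I$), and $J\cap\Sigma=\emptyset$ (a unit lying in $I$ would force $I=S_K$), so $J\cap\sigma(S)=\{0\}$. For such a $\sigma$--ideal, $\sigma_i(x)\in J\cap\sigma(S)=\{0\}$ for all $x\in J$, hence $\sigma(J)=0$, and Proposition \ref{nonde}(1) gives $J\subseteq K_S$. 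But $S$ is $\sigma$--prime, hence semiprime, so $\widetilde K_S=0$ by Corollary \ref{CHC2}(1), whence $K_S\subseteq\widetilde K_S=\{0\}$ and $J=\{0\}$, a contradiction. Therefore $S_K$ has no proper nonzero $\sigma$--ideal, and Theorem \ref{ssre1} identifies $S_K\simeq\oplus_{i=1}^pM_{k_i}(D_i)$ as one of the algebras of Theorem \ref{trach}.

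Finally I would read off the minimal primes from this decomposition. In $S_K=\oplus_{i=1}^pM_{k_i}(D_i)$ each summand is simple, so the ideals $\widehat P_j:=\oplus_{i\neq j}M_{k_i}(D_i)$ are two--sided prime (the quotient $M_{k_j}(D_j)$ being simple) and satisfy $\bigcap_j\widehat P_j=\{0\}$. Contracting along the embedding $S\hookrightarrow S_K$, the ideals $P_j:=S\cap\widehat P_j$ are prime in $S$, there are at most $p$ of them, and $\bigcap_j P_j=S\cap\bigcap_j\widehat P_j=\{0\}$. Since $\prod_j P_j\subseteq\bigcap_j P_j=\{0\}$ lies in every prime ideal $Q$ of $S$, primeness of $Q$ forces $P_j\subseteq Q$ for some $j$; hence the minimal primes of $S$ are exactly the minimal members of the finite set $\{P_1,\ldots,P_p\}$, with zero intersection. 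The step I expect to be the main obstacle is the $\sigma$--simplicity of $S_K$: one must control the contraction of $\sigma$--ideals under central localization and pin down $K_S=\{0\}$. Once $S_K$ is known to be $\sigma$--simple, the rest is a routine application of Theorem \ref{ssre1} together with contraction of prime ideals.
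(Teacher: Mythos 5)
Your proof is correct and follows exactly the route the paper intends: the corollary is stated without a written proof, resting on the preceding remark that Proposition \ref{mm} gives the embedding $S\hookrightarrow S\otimes_{\sigma(S)}K$ and on Theorem \ref{ssre1}, and your contraction argument for the $\sigma$--simplicity of the localization together with the identification of the minimal primes supplies precisely the omitted details. The one point you leave tacit is that the $P_j$ are pairwise incomparable --- which follows since $\Sigma^{-1}P_j=\widehat P_j$ recovers the $j$-th summand's complement --- so that all of the $P_j$, not merely the minimal members of your list, are minimal primes, as the statement asserts.
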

\subsection{The Spectrum} 
 In any associative algebra $R$  one can define the {\em spectrum}  of $R$ as the set of all its prime ideals, it is equipped with the {\em Zariski topology}.  
 
 For commutative algebras the spectrum is a contravariant functor with $f:A\to B$ giving $P\mapsto f^{-1}(P)$. But in general a subalgebra of a prime algebra need not be prime  and the functoriality fails.

For $\Sigma$--algebras   $R$ we may define:
$$Spec_\sigma (R):=\{P\mid P \text{ is a prime $\sigma$--ideal}\}.$$  
 For  an $n$ Cayley-Hamilton algebra $R$ we have, by Corollary \ref{CHC2} the map  $j: Spec_\sigma (R)\to Spec (\sigma(R)),\ P\mapsto P\cap \sigma(R) $   and the remarkable fact:
\begin{proposition}\label{iso}
The map  $j: Spec_\sigma (R)\to Spec (\sigma(R)),\ P\mapsto P\cap \sigma(R) $ is a homeomorphism, its inverse  is $\mathfrak p\mapsto \tilde K(\mathfrak pR).$
\end{proposition}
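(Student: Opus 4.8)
The plan is to transport the whole question to the commutative ring $\sigma(R)$, using repeatedly the identity $\sigma(R/I)=\sigma(R)/(I\cap\sigma(R))$ for a $\sigma$-ideal $I$ (valid because $\sigma(R/I)$ is generated by the classes $\overline{\sigma_i(a)}=\sigma_i(\bar a)$), together with the structure theorem Corollary \ref{CHC2}. First I would check that $j$ is well defined: if $P$ is a prime $\sigma$-ideal then $R/P$ is a prime $n$-CH algebra, so by Corollary \ref{CHC2}(2) its $\sigma$-algebra $\sigma(R)/(P\cap\sigma(R))$ is a domain, whence $P\cap\sigma(R)$ is a proper prime ideal of $\sigma(R)$.

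For the inverse I would first show that for a prime $\mathfrak p\subset\sigma(R)$ the ideal $\mathfrak pR$ is a $\sigma$-ideal, and in fact $\sigma_i(\mathfrak pR)\subseteq\mathfrak p$ for all $i\ge1$. On a single term $cr$ with $c\in\mathfrak p$ this is the relation $\sigma_i(cr)=c^i\sigma_i(r)$ from \eqref{sigre}; for a sum one expands $\sigma_i$ by Amitsur's formula \eqref{comAm} exactly as in the proof of Proposition \ref{sideq}, and observes that every polarized term actually involving a factor $cr$ is divisible by $c\in\mathfrak p$. Granting this, $S:=R/\mathfrak pR$ is again an $n$-CH algebra and $\tilde K(\mathfrak pR)$ is defined (Definition \ref{ker}) as the preimage of $\widetilde K_S$. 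A key auxiliary point is that for $z\in\sigma(R)$ one has that $z$ is nilpotent modulo $\mathfrak pR$ if and only if $z\in\mathfrak p$: indeed if $z^m\in\mathfrak pR$ then $z^m\in\sigma(R)\cap\mathfrak pR$, and since $\sigma_i(\mathfrak pR)\subseteq\mathfrak p$ the Cayley--Hamilton identity $CH_n$ applied to elements of $\sigma(R)$ forces $\sigma(R)\cap\mathfrak pR\subseteq\sqrt{\mathfrak p}=\mathfrak p$, so $z\in\mathfrak p$.

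Using this I can describe $\tilde K(\mathfrak pR)$ explicitly; unwinding the definition of $\widetilde K_S$ gives
\[
\tilde K(\mathfrak pR)=\{x\in R\mid \sigma_i(xy)\in\mathfrak p\ \ \forall y\in R,\ \forall i\ge1\}.
\]
Intersecting with $\sigma(R)$, the choice $y=1$ already forces $c\in\mathfrak p$: if all $\sigma_i(c)\in\mathfrak p$ then $CH_n(c)=0$ yields $c^n\in\mathfrak p$, hence $c\in\mathfrak p$; conversely $\sigma_i(\mathfrak pR)\subseteq\mathfrak p$ gives the reverse inclusion. Thus $\tilde K(\mathfrak pR)\cap\sigma(R)=\mathfrak p$, so $\sigma\big(R/\tilde K(\mathfrak pR)\big)=\sigma(R)/\mathfrak p$ is a domain; since $R/\tilde K(\mathfrak pR)=S/\widetilde K_S$ is regular by Proposition \ref{nonde}(3), Corollary \ref{CHC2}(2) shows $\tilde K(\mathfrak pR)$ is a prime $\sigma$-ideal with $j(\tilde K(\mathfrak pR))=\mathfrak p$. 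For the other composite, given a prime $\sigma$-ideal $P$ with $\mathfrak p=P\cap\sigma(R)$ one has $\mathfrak pR\subseteq P$; for $x\in P$ we get $\sigma_i(xy)\in P\cap\sigma(R)=\mathfrak p$, so $x\in\tilde K(\mathfrak pR)$ by the displayed description, giving $P\subseteq\tilde K(\mathfrak pR)$, while for $x\in\tilde K(\mathfrak pR)$ we get $\sigma_i(xy)\in\mathfrak p\subseteq P$, i.e. $\bar x\in K_{R/P}\subseteq\widetilde K_{R/P}=0$ (Corollary \ref{CHC2}(1), $R/P$ being prime hence semiprime), so $x\in P$; thus $\tilde K(\mathfrak pR)=P$. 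Hence $j$ is a bijection with the stated inverse.

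Finally I would treat continuity in both directions (writing $V,V_\sigma$ for the closed sets of the two Zariski topologies): $j^{-1}(V(\mathfrak b))=\{P\mid P\supseteq\mathfrak b\}=V_\sigma(\mathfrak bR)$ is closed, and $j\big(V_\sigma(I)\big)=\{\mathfrak p\mid\tilde K(\mathfrak pR)\supseteq I\}=V(\mathfrak b_I)$, where $\mathfrak b_I$ is the ideal of $\sigma(R)$ generated by all $\sigma_i(xy)$ with $x\in I$, $y\in R$, $i\ge1$; so $j$ is a continuous closed bijection, hence a homeomorphism. The main obstacle is the opening step of the second paragraph — verifying that $\mathfrak pR$ is a genuine $\sigma$-ideal with $\sigma_i(\mathfrak pR)\subseteq\mathfrak p$ — since this is the only place where one must control the nonlinear laws $\sigma_i$ on \emph{sums} rather than quote a formal consequence of the structure theory; once the polarizations coming from Amitsur's formula are seen to absorb a factor from $\mathfrak p$, everything else is bookkeeping with Corollary \ref{CHC2} and Proposition \ref{nonde}.
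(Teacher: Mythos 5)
Your proof is correct and follows essentially the same route as the paper's: establish that $\mathfrak pR$ is a $\sigma$--ideal with $\sigma_i(\mathfrak pR)\subseteq\mathfrak p$ via Amitsur's formula, deduce $\sigma(R/\tilde K(\mathfrak pR))=\sigma(R)/\mathfrak p$ and the primality of $\tilde K(\mathfrak pR)$ from Corollary \ref{CHC2} and Proposition \ref{nonde}, and identify the two composites using the vanishing of the kernel $K_{R/P}$ in a prime quotient. The only real difference is that you also check continuity in both directions --- a point the paper's proof leaves implicit --- and your explicit description of $\tilde K(\mathfrak pR)$ as $\{x\mid \sigma_i(xy)\in\mathfrak p,\ \forall y,\ \forall i\}$ makes the closedness of $j$ transparent.
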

\begin{proof} First we have, for any $\Sigma$--algebra   $R$,  any  $I\subset  \sigma(R)$  an ideal  of $\sigma(R)$ that $IR$ is a $\sigma$--ideal and   $IR\cap \sigma(R)= I$. In fact if $r\in I\cap \sigma(R)$ we have $r=\sum_i a_is_i,\ a_i\in I,\ s_i\in R $ and, by Amitsur's formula  $\sigma_j( r)=\sigma_j(\sum_i a_is_i )$ is a polynomial in elements $\sigma_k(u)$  with $u$ a monomial in the elements  $a_is_i$. So $u=as,\ a\in I$ and $\sigma_k(u)=a^k \sigma_k(s)\in I.$

Let $\mathfrak p\subset \sigma(R)$  be a prime  $\sigma$--ideal.  Since $\sigma(R/ \mathfrak pR )=\sigma(R)/\mathfrak p$ is a domain we have also  $\tilde K(\mathfrak pR)/\mathfrak pR\cap \sigma(R)/\mathfrak p=\{0\}$          hence $\sigma(R/\tilde K(\mathfrak pR))=\sigma(R)/\mathfrak p$.

From  Corollary \ref{CHC2} 2) we have that the ideal $\tilde K(\mathfrak pR),$  is prime. In fact  $\sigma(R/\tilde K(\mathfrak pR))=\sigma(R)/\mathfrak p$ a domain and also the radical of $R/\tilde K(\mathfrak pR)$ is $\{0\}$, by 3) of Proposition \ref{nonde}.
                    
So the composition in one direction is the identity $\mathfrak p=\tilde K(\mathfrak pR)\cap \sigma(R)$.   If $P$ is a prime $\sigma$--ideal  we need to show that $P=\tilde K((P\cap \sigma(R))R)$.

 We certainly have $P\supset\tilde  K((P\cap \sigma(R))R)$ so it is enough to show that, if $P\supset Q$ are two prime $\sigma$--ideals and $P\cap \sigma(R)=Q\cap \sigma(R)$ then $P=Q$. In fact in $R/Q$, a prime $\sigma$--algebra,  we have $\sigma(P/Q)=0$ which implies $P/Q\subset K_{R/Q}=0$.\end{proof}
So  for $n$ Cayley-Hamilton  algebras the spectrum is also a contravariant functor setting $$f:A\to B,\ P\mapsto       \tilde    K(f^{-1}(P)).$$
\paragraph{Some consequences} Assume that $R$ is  an $n$ Cayley-Hamilton  algebra.
Let $\mathfrak p$  be a prime $\sigma$--ideal of $\sigma(R)$  and consider the local algebra $\sigma(R)_{ \mathfrak p}$ and $$R_{ \mathfrak p}:=R\otimes_{\sigma(R)}\sigma(R)_{ \mathfrak p}$$ we have that 
\begin{proposition}\label{locanc}
$R_{ \mathfrak p}$ is   local, as  $\sigma$--ring, with maximal $\sigma$--ideal $\tilde K(R_{ \mathfrak p}\mathfrak p)$. Theorem \ref{trach} gives the possible residue  $\sigma$--simple algebras $R( \mathfrak p):=R_{ \mathfrak p}/\tilde K(R_{ \mathfrak p}\mathfrak p)$. 

\end{proposition}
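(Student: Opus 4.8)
The plan is to reduce both assertions to results already available for the whole algebra, by showing that forming the $\sigma$-algebra commutes with localization at the central multiplicative set $S:=\sigma(R)\setminus\mathfrak p$, i.e. $\sigma(R_{\mathfrak p})=\sigma(R)_{\mathfrak p}$. Granting this identification, the localness of $R_{\mathfrak p}$ and the precise shape of its maximal $\sigma$-ideal drop out of the homeomorphism of Proposition \ref{iso}, while the structure of the residue is handled by Corollary \ref{CHC2} together with Theorem \ref{ssre1}.

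First I would record that $R_{\mathfrak p}={}_B R$ for $B=\sigma(R)_{\mathfrak p}$, a commutative $\sigma(R)$--algebra, so that by Remark \ref{spl} the polynomial laws $\sigma_i$, and with them the three $CH_n$ conditions, extend to make $R_{\mathfrak p}$ again an $n$--CH $\Sigma$--algebra. The crux is the formula $\sigma_i(r/s)=\sigma_i(r)/s^i$ for $r\in R,\ s\in S$, which rests on the homogeneity $\sigma_i(\lambda r)=\lambda^i\sigma_i(r)$ for every $\lambda\in\sigma(R)$. I would derive this from the fact that the elements of $\sigma(R)$ are \emph{scalars}: the generators $\sigma_j(a)$ satisfy $\sigma_i(\sigma_j(a)b)=\sigma_j(a)^i\sigma_i(b)$ hence $\sigma_i(\sigma_j(a))=\binom ni\sigma_j(a)^i$, the scalars form a subring (closure under sums follows from $\chi_{\lambda+\mu}(t)=\chi_\mu(t+\lambda)=(t-\lambda-\mu)^n$, since translation by a scalar shifts the variable of the characteristic polynomial), and for a scalar $\lambda$ comparing coefficients in $\chi_{\lambda r}(t)=\lambda^n\chi_r(\lambda^{-1}t)$ gives the homogeneity. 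Consequently $\sigma_i(r/s)=\sigma_i(r)/s^i\in\sigma(R)_{\mathfrak p}$, and since $\sigma(R)_{\mathfrak p}\subseteq\sigma(R_{\mathfrak p})$ is clear, one obtains $\sigma(R_{\mathfrak p})=\sigma(R)_{\mathfrak p}$.

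Next I would apply Proposition \ref{iso} to the $n$--CH algebra $R_{\mathfrak p}$: the correspondence $P\mapsto P\cap\sigma(R_{\mathfrak p})$ is an (inclusion preserving) homeomorphism $Spec_\sigma(R_{\mathfrak p})\simeq Spec(\sigma(R)_{\mathfrak p})$ with inverse $\mathfrak q\mapsto\tilde K(\mathfrak q R_{\mathfrak p})$. Since $\sigma(R)_{\mathfrak p}$ is local with maximal ideal $\mathfrak m=\mathfrak p\,\sigma(R)_{\mathfrak p}$ (a prime $\sigma$--ideal, by Proposition \ref{sideq}), its spectrum has a unique closed point, so $Spec_\sigma(R_{\mathfrak p})$ has a unique maximal element, namely $\tilde K(\mathfrak m R_{\mathfrak p})=\tilde K(\mathfrak p R_{\mathfrak p})=\tilde K(R_{\mathfrak p}\mathfrak p)$, using $\mathfrak m R_{\mathfrak p}=\mathfrak p R_{\mathfrak p}$. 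This is exactly the asserted maximal $\sigma$--ideal, so $R_{\mathfrak p}$ is local as a $\sigma$--ring.

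Finally, for the residue: $R(\mathfrak p)=R_{\mathfrak p}/\tilde K(R_{\mathfrak p}\mathfrak p)$ is the quotient by the unique maximal $\sigma$--ideal, hence has no proper nonzero $\sigma$--ideal, i.e. it is $\sigma$--simple. By the computation in the proof of Proposition \ref{iso} its $\sigma$--algebra is $\sigma(R)_{\mathfrak p}/\mathfrak m=k(\mathfrak p)$, the residue field, and its radical vanishes by Proposition \ref{nonde}(3); thus Corollary \ref{CHC2}(3) confirms $\sigma$--simplicity and Theorem \ref{ssre1} identifies $R(\mathfrak p)$ with one of the algebras of Theorem \ref{trach}. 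I expect the main obstacle to be the first step, namely verifying that $\sigma$--algebra formation commutes with localization (equivalently, the scalar/homogeneity property of $\sigma(R)$), since everything afterward is a formal invocation of Proposition \ref{iso}, Corollary \ref{CHC2} and Theorem \ref{ssre1}.
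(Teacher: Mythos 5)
The paper itself states Proposition \ref{locanc} without proof, as an immediate consequence of Proposition \ref{iso}; your argument follows exactly the route the paper intends (identify $\sigma(R_{\mathfrak p})$ with $\sigma(R)_{\mathfrak p}$, transport the unique closed point of $Spec(\sigma(R)_{\mathfrak p})$ through the homeomorphism of Proposition \ref{iso}, then feed the $\sigma$--simple residue into Corollary \ref{CHC2} and Theorem \ref{ssre1}). The second and third paragraphs of your proposal are correct and complete.

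The one step that does not work as written is your "scalars form a subring" argument. What you actually need is the homogeneity $\sigma_i(\lambda r)=\lambda^i\sigma_i(r)$ for every $\lambda\in\sigma(R)$ and every $r$, and the identity $\chi_{\lambda+\mu}(t)=(t-\lambda-\mu)^n$ is a statement about the characteristic polynomial of the element $\lambda+\mu$ itself, not about $\sigma_i\bigl((\lambda+\mu)r\bigr)$; it does not yield $\sigma_i\bigl((\lambda+\mu)r\bigr)=(\lambda+\mu)^i\sigma_i(r)$. The correct route is: axiom \eqref{sigre} gives the homogeneity for the generators $\sigma_j(a)$, multiplicativity of that relation handles products, and for sums one polarizes. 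Passing to $B[t,s]$ one has $\sigma_i\bigl(t\lambda r+s\mu r\bigr)=\sum_{j+k=i}t^js^k\sigma_{i;j,k}(\lambda r,\mu r)$, and by Amitsur's Formula \eqref{comAm} each $\sigma_{i;j,k}$ is a polynomial in the $\sigma_l(p)$ with $p$ a word in $\lambda r,\mu r$; since $\lambda,\mu$ are central such a word of multidegree $(c,d)$ equals $\lambda^c\mu^d r^{c+d}$, so the generator case gives $\sigma_{i;j,k}(\lambda r,\mu r)=\lambda^j\mu^k\sigma_{i;j,k}(r,r)=\binom{i}{j}\lambda^j\mu^k\sigma_i(r)$, and restitution at $t=s=1$ yields $(\lambda+\mu)^i\sigma_i(r)$. (Relatedly, $R_{\mathfrak p}$ is a tensor product over $\sigma(R)$, not over the base ring $A$, so it is not literally of the form ${}_BR$ in the sense of \eqref{bac}; the same homogeneity is exactly what shows the laws $\sigma_i$ descend to the localization, after which your well-definedness check $\sigma_i(r/s)=\sigma_i(r)/s^i$ and the equality $\sigma(R_{\mathfrak p})=\sigma(R)_{\mathfrak p}$ go through.) With this repair the proof is complete.
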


A special case is when $R( \mathfrak p)$ is simple as algebra and of rank $n^2$ over its center. In this case one can apply Artin's characterization of Azumaya algebras (cf. \cite{agpr} Theorem 10.3.2)  
 and deduce that
\begin{proposition}\label{locanc1}
  If $R( \mathfrak p)$ is simple as algebra and of rank $n^2$ over its center than $R_{ \mathfrak p}$ is a rank $n^2$ Azumaya algebra over its center $\sigma(R)_{ \mathfrak p}$.

\end{proposition}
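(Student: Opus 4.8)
The plan is to realize $R_{\mathfrak p}$ as an algebra over the local ring $O:=\sigma(R)_{\mathfrak p}$ and to verify the hypotheses of Artin's characterization of Azumaya algebras (\cite{agpr} Theorem 10.3.2). First I would record the features of $R_{\mathfrak p}$ that come for free. Since $R_{\mathfrak p}=R\otimes_{\sigma(R)}\sigma(R)_{\mathfrak p}$ is a base change, it is again an $n$--CH algebra with $\sigma(R_{\mathfrak p})=\sigma(R)_{\mathfrak p}=O$; by Remark \ref{PIn} it satisfies all identities of $M_n(\mathbb Z)$, and by the Cayley--Hamilton identity every element is integral of degree $\le n$ over $O$, so in particular its center $Z:=Z(R_{\mathfrak p})$ is integral over $O$. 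The ring $O$ is local with maximal ideal $\mathfrak m_O=\mathfrak p O$ and residue field $\kappa:=\kappa(\mathfrak p)$. I would not assume $R_{\mathfrak p}$ is module finite over $O$, which is why the central polynomial form of Artin's criterion, rather than a naive flatness argument, is the right tool.

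Next I would analyze the closed fibre. Set $\bar R:=R_{\mathfrak p}/\mathfrak m_O R_{\mathfrak p}$, an $n$--CH algebra over $\kappa$ with $\sigma(\bar R)=O/\mathfrak m_O=\kappa$. Because $\sigma(\bar R)$ is a field, every proper $\sigma$--ideal $I$ of $\bar R$ has $\sigma(I)=0$ and hence lies in the kernel $K_{\bar R}$; thus $\bar R$ has a \emph{unique} maximal $\sigma$--ideal, namely its radical $\widetilde K_{\bar R}=K_{\bar R}$, and $\bar R/\widetilde K_{\bar R}$ is exactly the residue algebra $R(\mathfrak p)$ of Proposition \ref{locanc}. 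By hypothesis $R(\mathfrak p)$ is simple of rank $n^2$ over its center, so Corollary \ref{enq} applies: its center is $\kappa$, it is central simple of degree $n$ over $\kappa$, and its norm is the reduced norm. In short, the closed fibre of $R_{\mathfrak p}$ over $O$, after killing its nil radical, is a form of $M_n(\kappa)$.

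Finally I would feed this into Artin's theorem. Choose a multilinear central polynomial $c$ for $M_n$ which is not an identity of $M_n$ but vanishes on $M_{n-1}$ (a Formanek polynomial). On the central simple algebra $R(\mathfrak p)$ the polynomial $c$ takes a value that is a nonzero element of the field $\kappa$, hence a unit; lifting the arguments to $R_{\mathfrak p}$ produces a central element $z=c(r_1,\dots,r_k)\in Z$ whose image in $R(\mathfrak p)$ is a unit of $\kappa$. The content of Artin's criterion (\cite{agpr} Theorem 10.3.2) is that the ideal of $Z$ generated by such evaluations of $c$ equals $Z$ precisely when $R_{\mathfrak p}$ is Azumaya of rank $n^2$, so it suffices to show $z\in Z^\times$. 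Once Azumaya of rank $n^2$ is established, Theorem \ref{inaz} (equivalently Remark \ref{PIn0}) identifies the center with the norm algebra, giving $Z=\sigma(R_{\mathfrak p})=\sigma(R)_{\mathfrak p}$ as claimed.

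The main obstacle is exactly the last step: upgrading ``$z$ is a unit modulo the maximal $\sigma$--ideal'' to ``$z$ is a unit of $Z$'', which means controlling all of $\operatorname{Spec}Z$ and not merely the closed $\sigma$--point. Here I would use that $R_{\mathfrak p}$ is $\sigma$--local (Proposition \ref{locanc}) together with the fact that $\widetilde K_{\bar R}$ is a nil ideal (Proposition \ref{nonde}): a nontrivial central idempotent of $R_{\mathfrak p}$ would reduce to a central idempotent of the simple algebra $R(\mathfrak p)$, forcing its complement into the nil radical and hence to $0$, so $Z$ has no nontrivial idempotents. Combined with integrality over the local ring $O$, every maximal ideal of $Z$ lies over $\mathfrak m_O$, so maximal ideals of $Z$ correspond to points of the zero--dimensional fibre $Z/\mathfrak m_O Z$. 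I expect the delicate point to be precisely the passage from ``no nontrivial idempotents'' to ``$Z$ local'', i.e. that $Z/\mathfrak m_O Z$ has a single point; this is where the $\sigma$--locality of the whole family, and not just of the closed fibre, must be used, after which $\bar z\in\kappa^\times$ forces $z\in Z^\times$ and the Formanek ideal is all of $Z$.
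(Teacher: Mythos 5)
Your overall strategy --- verifying Artin's characterization of Azumaya algebras for $R_{\mathfrak p}$ --- is exactly what the paper does (the paper offers nothing beyond the citation of \cite{agpr} Theorem 10.3.2), and your reduction of the closed fibre to the hypothesis on $R(\mathfrak p)$ via Corollary \ref{enq} is sound. The genuine gap is the one you flag yourself and then do not close: showing that the evaluations of the central polynomial generate the unit ideal of $Z=Z(R_{\mathfrak p})$. ``No nontrivial idempotents'' together with integrality over the local ring $O$ does not make $Z$ local (the integral closure of a local domain in a finite extension already has, in general, several maximal ideals and no idempotents), and the $\sigma$--locality of Proposition \ref{locanc} does not directly help either: for a central element $z\in Z$ not lying in $\sigma(R_{\mathfrak p})$ the identity $\sigma_i(zb)=z^i\sigma_i(b)$ of \eqref{sigre} is not available, so an ideal of the form $\mathfrak n R_{\mathfrak p}$ with $\mathfrak n\subset Z$ need not be a $\sigma$--ideal and is not controlled by the unique maximal $\sigma$--ideal.

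The step can be closed, but by a different mechanism, namely the one the paper itself uses in the parallel Lemma \ref{azch2} and Theorem \ref{azch1}: use the homomorphic--image form of Artin's criterion ($R_{\mathfrak p}$ satisfies the identities of $M_n$ and no nonzero quotient satisfies those of $M_{n-1}$), which it suffices to test on simple quotients $R_{\mathfrak p}/M$ with $M$ a maximal two--sided ideal. By the argument of Corollary \ref{mama}, $M\cap O$ is a maximal ideal of the local ring $O$, hence equals $\mathfrak m_O$, so $M\supseteq \mathfrak m_O R_{\mathfrak p}$; since $R_{\mathfrak p}/M$ is simple it has no nonzero nil ideals, so $M$ also contains the nil kernel $\tilde K(\mathfrak m_O R_{\mathfrak p})$, and simplicity of $R(\mathfrak p)$ then forces $M=\tilde K(\mathfrak m_O R_{\mathfrak p})$. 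Thus the only simple quotient of $R_{\mathfrak p}$ is $R(\mathfrak p)$, which is central simple of degree $n$ and therefore does not satisfy the identities of $(n-1)\times(n-1)$ matrices; Artin's theorem applies with no need to decide whether $Z$ is local, and $Z=\sigma(R)_{\mathfrak p}$ then follows, as you say, from Theorem \ref{nazz}. As written, however, the decisive claim that the Formanek ideal is all of $Z$ is asserted rather than proved.
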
 Let us analyze general prime ideals, not necessarily $\sigma$--ideals. 

\begin{proposition}\label{azch}
Let $S$ be  an $n$ Cayley--Hamilton algebra with  $\sigma$--algebra $A$, $P$ an algebra ideal of $S$  which is prime and $\mathfrak p=P\cap A$. Let $F$ be the field of fractions  of  $A/\mathfrak p$.

 Then $S/P\otimes_AF=S/P\otimes_{A/\mathfrak p}F$ is a  simple algebra and $P$ is one of the minimal primes of the prime  $\sigma$--algebra $S/\tilde K(\mathfrak pS)$ (Corollary \ref{stpr}). \end{proposition}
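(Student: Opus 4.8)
The plan is to treat the two assertions in turn, the unifying idea being to localise $S/P$ at the generic point of $A/\mathfrak p$ and recognise the result as a central simple algebra, which simultaneously gives simplicity and pins down which minimal prime $P$ is.

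First I would record that $\mathfrak p=P\cap A$ is a \emph{prime $\sigma$--ideal} of $A=\sigma(S)$. Primeness is the usual contraction argument: if $a,b\in A$ satisfy $ab\in\mathfrak p$, then $aS,bS$ are ideals of $S$ (the $a,b$ being central) with $(aS)(bS)\subseteq abS\subseteq P$, so primeness of $P$ forces $a\in P$ or $b\in P$, i.e.\ $a\in\mathfrak p$ or $b\in\mathfrak p$. That $\mathfrak p$ is moreover a $\sigma$--ideal follows once one knows that on $\sigma(S)$ the operators act by $\sigma_i(c)=\binom ni c^i$ for $c\in A$; this comes from \eqref{sigre} evaluated on the generators $\sigma_j(a)$ (giving $\sigma_i(\sigma_j(a))=\binom ni\sigma_j(a)^i$) together with the realisation of $S$ as a quotient of $M_n(-)^{PGL}$ in Corollary \ref{frch1}, under which elements of $\sigma(S)$ are scalar matrices. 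Hence $c\in\mathfrak p$ gives $\sigma_i(c)\in(c)_A\subseteq\mathfrak p$.

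For the simplicity statement, set $\bar S:=S/P$ and $\bar A:=A/\mathfrak p$, a domain inside the centre of the prime ring $\bar S$. Using primeness one checks $\bar S$ is torsion free over $\bar A$: if $c\bar s=0$ with $0\ne c\in\bar A$, then $(c\bar S)(\bar S\bar s\bar S)=\bar S(c\bar s)\bar S=0$ and $c\bar S\ne0$, so primeness forces $\bar s=0$. Therefore $\bar S$ embeds in $\bar S_F:=\bar S\otimes_{\bar A}F$, which is again prime, being the localisation at the central set $\bar A\setminus\{0\}$ of non--zero--divisors. Because $S$ is $n$--CH, every $a\in S$ satisfies $\chi_a(a)=0$ with coefficients $\sigma_i(a)\in A$; reducing mod $P$ and inverting shows every element of $\bar S_F$ is integral of degree $\le n$ over the central field $F$. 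Consequently the centre $Z$ of $\bar S_F$ is a domain algebraic over the field $F$, hence itself a field. Now $\bar S_F$ is a prime PI algebra (it satisfies the identities of $M_n$, Remark \ref{PIn}) whose centre is a field, so by Lemma \ref{sss} applied over $Z$ (equivalently, by Posner's structure theorem for prime PI rings) it is a finite dimensional central simple $Z$--algebra. Since $S/P\otimes_AF=\bar S\otimes_{\bar A}F=\bar S_F$, this is exactly the desired simple algebra.

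Finally, since $\mathfrak p$ is a prime $\sigma$--ideal, Proposition \ref{iso} and Corollary \ref{stpr} give that $T:=S/\tilde K(\mathfrak p S)$ is a prime $\sigma$--algebra with $\sigma(T)=\bar A$, and that $T\otimes_{\bar A}F\cong\oplus_{i=1}^pM_{k_i}(D_i)$ is $\sigma$--simple with minimal primes $P_1,\dots,P_p$ of intersection $0$. I would then note $P\supseteq\tilde K(\mathfrak p S)$: indeed $\mathfrak p S\subseteq P$, and $\tilde K(\mathfrak p S)/\mathfrak p S$ is a nil ideal of $S/\mathfrak p S$ by Proposition \ref{nonde}, hence contained in the prime $P/\mathfrak p S$. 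Thus $\bar P:=P/\tilde K(\mathfrak p S)$ is a prime of $T$ with $\bar P\cap\sigma(T)=(P\cap A)/\mathfrak p=0$, so $\bar P$ lies over the generic point and extends to a prime of the semisimple algebra $T\otimes_{\bar A}F$, necessarily one of the $P_j$. As $T/\bar P\otimes_{\bar A}F=\bar S_F$ is simple by the previous paragraph, $\bar P$ is precisely the kernel of the projection onto the corresponding factor $M_{k_j}(D_j)$, i.e.\ $\bar P=P_j$ is a minimal prime. The main obstacle is the simplicity step: one must verify that the localised fibre $\bar S_F$ genuinely has a \emph{field} as centre and then correctly invoke prime--PI structure theory, since $\bar S_F$ may be a matrix algebra over a division ring with the inseparability phenomena of Theorem \ref{trach} rather than a split matrix algebra, and it is this identification that determines which minimal prime $P$ is.
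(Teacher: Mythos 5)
Your proof is correct, and its overall skeleton (establish $P\supseteq\tilde K(\mathfrak p S)$, pass to the localization at $\bar A\setminus\{0\}$, invoke Corollary \ref{stpr}) matches the paper's; the genuine difference is in how simplicity of $S/P\otimes_AF$ is obtained. The paper gets it for free: $S/P\otimes_AF$ is a \emph{prime} quotient of the direct sum of simple algebras $S/\tilde K(\mathfrak pS)\otimes_AF=\oplus_iS_i$, and a prime quotient of such a sum must kill all but one summand and hence equal a single $S_i$ --- this one observation delivers both assertions at once. You instead prove simplicity independently, via integrality of degree $\le n$ over the central field $F$ (from the Cayley--Hamilton identity) plus prime PI structure theory, and only then use the localization correspondence to pin down $\bar P$ as one of the minimal primes $P_j$. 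Your route buys a self-contained argument that does not need the full strength of Theorem \ref{trach} for the simplicity claim, and your preliminary verifications (that $\mathfrak p$ is a prime $\sigma$--ideal of $A$, and that $\bar S$ is $\bar A$--torsion free so that the localization is honest, prime and nonzero) make explicit two points the paper's three-line proof takes for granted. Two small cautions: the identity $\sigma_i(c)=\binom ni c^i$ is immediate from \eqref{sigre} only on the generators $\sigma_j(a)$ of $A$, and for sums one should argue as in the proof of Proposition \ref{iso} via Amitsur's formula (the conclusion $\sigma_i(\mathfrak p)\subseteq\mathfrak p$ is what you need and does hold); and Lemma \ref{sss} as stated requires the center to be an \emph{infinite} field, which $Z$ need not be --- your parenthetical appeal to Posner's theorem (the central localization of a prime PI ring is central simple, and here the center is already a field) is the correct way to close that gap.
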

\begin{proof}
 We have $P\supset \tilde K(\mathfrak pS) $ since $\tilde K(\mathfrak pS)$   is nil  modulo $\mathfrak pS$.  
 
 Thus we have a surjective map  $S/\tilde K(\mathfrak pS)\to S/P$  which induces a  surjective map  $S/\tilde K(\mathfrak pS)\otimes_AF\to S/P\otimes_AF$  with $F$ the field of fractions  of  $A/\mathfrak p$.

 Since $S/P\otimes_AF=S/P\otimes_{A/\mathfrak p}F$ is a prime algebra and, Corollary \ref{stpr},  $S/\tilde K(\mathfrak pS)\otimes_AF=\oplus_iS_i$  is a direct sum of simple algebras   we must have   $S/P \otimes_AF=S_i$  for one of the summands and the claim follows.
\end{proof}  This is a strong form of {\em going up} and {\em lying over} of commutative algebra in this general setting.\medskip

\begin{corollary}\label{mama}
Let $S$ be  an $n$ Cayley--Hamilton algebra with  $\sigma$--algebra $A$, $M$ an algebra ideal of $S$  which is maximal and $\mathfrak m=M\cap A$. Then $\mathfrak m$ is a maximal ideal of $A$.
\end{corollary}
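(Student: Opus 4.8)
The plan is to reduce the statement to a standard fact of commutative algebra: a domain over which a field is integral is itself a field. The two ingredients that make this applicable are the centrality of the $\sigma$--algebra and the integrality supplied by Cayley--Hamilton.

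First I would observe that since $M$ is a maximal algebra ideal of $S$, the quotient $S/M$ is a simple algebra (with $1$), and in particular prime; its center $Z(S/M)$ is therefore a field. Because $A=\sigma(S)$ lies in the center of $S$ (Definition \ref{sigal}), the image of $A$ in $S/M$ is central, hence contained in $Z(S/M)$. The kernel of the composite $A\hookrightarrow S\twoheadrightarrow S/M$ is exactly $\mathfrak m=M\cap A$, so this map factors as an injection $A/\mathfrak m\hookrightarrow Z(S/M)$. In particular $A/\mathfrak m$ is a domain, realized as a subring of the field $Z(S/M)$.

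Next I would invoke the Cayley--Hamilton identity: every $s\in S$ satisfies $s^{n}+\sum_{i=1}^{n}(-1)^{i}\sigma_i(s)s^{n-i}=0$ with $\sigma_i(s)\in A$, so $s$ is integral over $A$ of degree $\leq n$ (this is precisely the integrality used in the proof of Proposition \ref{finl}). Reducing modulo $M$, every element of $S/M$, and hence every element of the subfield $Z(S/M)$, is integral over the image $A/\mathfrak m$. Finally I would apply the elementary lemma that if a field $L$ is integral over a subdomain $B$ then $B$ is a field: for $0\neq b\in B$, writing an integral equation for $b^{-1}\in L$ over $B$ and clearing denominators by a power of $b$ exhibits $b^{-1}$ as a polynomial in $b$ with coefficients in $B$, so $b^{-1}\in B$. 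Taking $B=A/\mathfrak m$ and $L=Z(S/M)$ shows that $A/\mathfrak m$ is a field, i.e.\ $\mathfrak m$ is a maximal ideal of $A$.

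The argument is short, and the only genuine input beyond formal manipulation is the integrality coming from Cayley--Hamilton together with the fact that $A$ is central. The main point to get right is that the integral-extension lemma must be run against the center $Z(S/M)$, which is a field, rather than against all of $S/M$, which is only simple and need not be a domain; this is what lets me recognize $A/\mathfrak m$ as a subdomain of a field over which that field is integral. Alternatively, the first two paragraphs can be packaged through Proposition \ref{azch}, which already guarantees that $S/M\otimes_A F$ is simple with $F$ the field of fractions of $A/\mathfrak m$; the content of the corollary is then exactly that $F=A/\mathfrak m$, which is what the integrality argument delivers.
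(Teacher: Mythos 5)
Your proof is correct and follows essentially the same route as the paper: pass to the simple quotient $S/M$, note that its center is a field integral over $A/\mathfrak m$ via the Cayley--Hamilton integrality, and conclude that $A/\mathfrak m$ is a field. The only difference is cosmetic --- the paper cites the going up theorem where you spell out the elementary clearing-of-denominators argument for why a domain with a field integral over it is itself a field.
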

\begin{proof}
We have that $S/M$ is a simple algebra  integral over $A/\mathfrak m$ hence its center, a field, is  integral over $A/\mathfrak m$. Thus  $A/\mathfrak m$ is a field from the going up theorem.\end{proof}
\begin{lemma}\label{azch2}
Let $S$ be  an $n$ Cayley--Hamilton algebra with  $\sigma$--algebra $A$, $M$ an algebra ideal of $S$ so that $S/M$ is simple of dimension $n^2$ over its center  and $\mathfrak m=M\cap A$. Then $M=\mathfrak mS$.
\end{lemma}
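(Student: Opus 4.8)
The plan is to reduce the global equality $M=\mathfrak m S$ to the Azumaya ideal correspondence after localizing at $\mathfrak m$, feeding in the structure theory already established for $\sigma$--simple algebras. First I would record the trivial inclusion: since $\mathfrak m = M\cap A\subseteq M$ and $A=\sigma(S)$ is central, we get $\mathfrak m S = S\mathfrak m\subseteq M$, so only $M\subseteq \mathfrak m S$ remains. As $S/M$ is simple, $M$ is a maximal, hence prime, two--sided ideal, and by Corollary \ref{mama} the contraction $\mathfrak m$ is a maximal ideal of $A$; thus $F:=A/\mathfrak m$ is a field and $S/M$ is already an $F$--algebra, so $S/M\otimes_A F = S/M$.

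The crucial middle step is to identify the $\sigma$--simple residue attached to $\mathfrak m$ with $S/M$ itself. Applying Proposition \ref{azch} to the prime ideal $P=M$ (so $\mathfrak p=\mathfrak m$), the quotient $S/M$ is simple and occurs as one of the simple summands $S_i$ in the decomposition $S/\tilde K(\mathfrak m S)\cong\bigoplus_i S_i$. Here $S/\tilde K(\mathfrak m S)$ is $\sigma$--simple: its $\sigma$--algebra is $A/\mathfrak m=F$, a field, and its radical vanishes by Proposition \ref{nonde}(3), so by Corollary \ref{CHC2}(3) and Theorem \ref{ssre1} it is one of the algebras of Theorem \ref{trach}. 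The hypothesis that $S/M$ has dimension $n^2$ over its center is now decisive: Corollary \ref{enq} forces the decomposition to consist of the single block $S/M$, with center exactly $F$, so that $\tilde K(\mathfrak m S)=M$.

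Next I would localize at $\mathfrak m$. Because $\tilde K(\mathfrak m S)=M$ and $S/M$ is an $F$--algebra, the residue algebra $R(\mathfrak m)=S_{\mathfrak m}/\tilde K(\mathfrak m S_{\mathfrak m})$ equals $S/M$, which is simple of rank $n^2$ over its center. By Proposition \ref{locanc1}, $S_{\mathfrak m}$ is therefore an Azumaya algebra of rank $n^2$ over its center $A_{\mathfrak m}=\sigma(S)_{\mathfrak m}$. For an Azumaya algebra the two--sided ideals correspond bijectively to ideals of the center via $I\mapsto I\cap A_{\mathfrak m}$ and $\mathfrak a\mapsto \mathfrak a S_{\mathfrak m}$ (a standard fact, see \cite{A-G2}); applying this to $M_{\mathfrak m}$, and using that exactness of localization gives $M_{\mathfrak m}\cap A_{\mathfrak m}=\mathfrak m A_{\mathfrak m}$, we obtain $M_{\mathfrak m}=\mathfrak m A_{\mathfrak m}\cdot S_{\mathfrak m}=(\mathfrak m S)_{\mathfrak m}$.

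Finally I would descend this equality back to $S$. Since $M\subseteq S$ we have $\mathfrak m M\subseteq\mathfrak m S$, so the quotient $M/\mathfrak m S$ is annihilated by $\mathfrak m$ and is thus a module over the field $F=A/\mathfrak m$. For such a module every element of $A\setminus\mathfrak m$ acts invertibly, whence localization at $\mathfrak m$ is an isomorphism and $M/\mathfrak m S\cong (M/\mathfrak m S)_{\mathfrak m}=M_{\mathfrak m}/(\mathfrak m S)_{\mathfrak m}=0$. Therefore $M=\mathfrak m S$. I expect the main obstacle to be precisely the middle step: one must rule out that the $\sigma$--simple residue $R(\mathfrak m)$ is a nontrivial direct sum and confirm it is a single simple block of rank $n^2$ with center $F$, which is exactly where the $n^2$--dimensionality hypothesis and Corollary \ref{enq} are indispensable. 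Once the Azumaya structure at $\mathfrak m$ is in hand, both the ideal correspondence and the $F$--module descent are routine.
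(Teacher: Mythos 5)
Your proof is correct, and its first half --- using Corollary \ref{mama} to see that $\mathfrak m$ is maximal, then Proposition \ref{azch} together with Theorem \ref{ssre1} and Corollary \ref{enq} to force the $\sigma$--simple quotient $S/\tilde K(\mathfrak m S)$ to collapse to the single block $S/M$ with center $F=A/\mathfrak m$ --- is exactly the paper's opening move. Where you diverge is in how you exploit this. The paper stays with $\bar S:=S/\mathfrak m S$: since its unique semisimple quotient $\bar S/\tilde K(\bar S)=S/M$ has rank $n^2$ over its center, Artin's criterion makes $\bar S$ Azumaya over its center, and then Theorem \ref{nazz} (the norm of an Azumaya algebra takes values precisely in its center) pins that center down to $\sigma(\bar S)=F$; hence $\bar S$ is central simple over the field $F$, its radical $\tilde K(\bar S)=M/\mathfrak m S$ vanishes, and $M=\mathfrak m S$. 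You instead localize at $\mathfrak m$, invoke Proposition \ref{locanc1} to get that $S_{\mathfrak m}$ is Azumaya of rank $n^2$ over $A_{\mathfrak m}$, apply the Auslander--Goldman ideal correspondence to identify $M_{\mathfrak m}$ with $\mathfrak m S_{\mathfrak m}$, and descend via the observation that $M/\mathfrak m S$ is an $F$--module on which localization at $\mathfrak m$ is the identity. Both routes ultimately rest on Artin's characterization of Azumaya algebras; yours trades the appeal to Theorem \ref{nazz} for the classical ideal correspondence plus a routine descent step, and has the mild advantage of not having to identify the center of $S/\mathfrak m S$ directly. One small point worth making explicit in your localization step is the equality $\tilde K(\mathfrak m S_{\mathfrak m})=M_{\mathfrak m}$: the inclusion $\tilde K(\mathfrak m S)_{\mathfrak m}\subset \tilde K(\mathfrak m S_{\mathfrak m})$ comes from Definition \ref{ker}, while the reverse inclusion holds because $S_{\mathfrak m}/M_{\mathfrak m}=S/M$ has zero radical; with that noted, the rest of your argument goes through.
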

\begin{proof} The 
algebra       $\bar S:=S/\mathfrak m S$   is an $n$ Cayley--Hamilton algebra with  $\sigma$--algebra the field $F=A/\mathfrak m  A $ by the previous Lemma.  

We have that  $\bar S/\tilde K(\bar S)$  is a simple $\sigma$--algebra to which we can apply
Theorem \ref{trach}.  From Corollary \ref{enq} t  and the fact that one of the simple summands of $R:=\bar S/\tilde K(\bar S)$ is $S/M=M_k(D)$   simple of dimension $n^2$ over its center we have that     $R=M_k(D)$ with center $F$. Since $S$ satisfies the PI's of $n\times n$ matrices, by Artin's characterization of Azumaya algebras we have that $S$  is Azumaya over its center $A$. By Theorem \ref{nazz} the norm takes as values 
the center $A$ which thus must equal $F$ and so  $S=M_k(D)$.
\end{proof}
\begin{theorem}\label{azch1}
Let $S$ be  an $n$ Cayley--Hamilton algebra with $\sigma$--algebra $A$ a local ring with maximal ideal $\mathfrak m$. Let $M$ be an algebra ideal of $S$ so that $S/M$ is simple of dimension $n^2$ over its center, then $M= \mathfrak m S$ and $S$ is a rank $n^2$ Azumaya algebra over $A$.
\end{theorem}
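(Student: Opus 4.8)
The plan is to deduce this from Lemma \ref{azch2} together with the localization result Proposition \ref{locanc1}, the new ingredient being only the locality of $A=\sigma(S)$, which forces the ideal $M\cap A$ to be the given maximal ideal $\mathfrak m$. First I would observe that, since $S/M$ is a simple algebra, $M$ is a maximal two-sided ideal of $S$. Corollary \ref{mama} then applies and shows that $M\cap A$ is a maximal ideal of $A$. Because $A$ is local with unique maximal ideal $\mathfrak m$, this forces $M\cap A=\mathfrak m$. With $\mathfrak m=M\cap A$ now identified, Lemma \ref{azch2} applies verbatim and yields $M=\mathfrak m S$, which is the first assertion of the theorem.

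For the Azumaya statement I would localize at the correct prime $\sigma$--ideal and verify the hypotheses of Proposition \ref{locanc1}. The ideal $\mathfrak m$ is a $\sigma$--ideal of $A$: by the argument opening the proof of Proposition \ref{iso}, the extension $\mathfrak m S$ is a $\sigma$--ideal with $\mathfrak m S\cap A=\mathfrak m$, so for $a\in\mathfrak m$ we get $\sigma_i(a)\in\mathfrak m S\cap A=\mathfrak m$; being maximal it is prime, hence a prime $\sigma$--ideal of $\sigma(S)$. Since $A$ is already local at $\mathfrak m$ the localization is trivial, $A_{\mathfrak m}=A$ and $S_{\mathfrak m}=S\otimes_A A_{\mathfrak m}=S$. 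The residue $\sigma$--simple algebra of Proposition \ref{locanc} is $R(\mathfrak m)=S/\tilde K(\mathfrak m S)$; since $S/\mathfrak m S=S/M$ is simple its radical $\widetilde K_{S/M}$ is a nil ideal in a simple algebra, hence zero, so $\tilde K(\mathfrak m S)=\mathfrak m S=M$ and $R(\mathfrak m)=S/M$ is simple of rank $n^2$ over its center. Proposition \ref{locanc1} then gives that $S=S_{\mathfrak m}$ is a rank $n^2$ Azumaya algebra over its center $\sigma(S)_{\mathfrak m}=A$, which completes the proof.

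The step I expect to be the genuine obstacle is confirming that the residue algebra $R(\mathfrak m)$ is exactly the single simple block $S/M$ of full rank $n^2$, rather than a direct sum $\oplus_i M_{k_i}(D_i)$ of several blocks of smaller rank as permitted by Theorem \ref{trach}. This is precisely what the identity $M=\mathfrak m S$ of Lemma \ref{azch2} buys, in combination with the vanishing of the radical of the simple algebra $S/M$; without it the hypothesis of Proposition \ref{locanc1}, namely that $R(\mathfrak m)$ be simple as an algebra and of rank $n^2$, would not be available. Everything else reduces to bookkeeping with the homeomorphism of Proposition \ref{iso} and the triviality of localizing an already local ring. As an alternative to Proposition \ref{locanc1} one could run the Artin characterization of Azumaya algebras exactly as in the proof of Lemma \ref{azch2}, the local hypothesis guaranteeing that $S/\mathfrak m S$ is the only central simple reduction to be tested.
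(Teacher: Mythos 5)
Your proof is correct and follows exactly the route the paper intends: the theorem is stated without a separate proof precisely because it is the combination of Corollary \ref{mama} (locality forces $M\cap A=\mathfrak m$), Lemma \ref{azch2} (giving $M=\mathfrak m S$), and the Artin-characterization argument for Azumaya-ness, which you correctly access in packaged form through Proposition \ref{locanc1} after checking that $R(\mathfrak m)=S/\tilde K(\mathfrak m S)=S/M$ is the single simple block of rank $n^2$. Your closing remark that one could instead rerun Artin's criterion directly, as in the proof of Lemma \ref{azch2}, matches the paper's own presentation.
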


\subsection{$T$--ideals and relatively free algebras}
  One purpose of this paper is to classify and then study prime and semiprime $T$--ideals in a free  $CH_n$--algebra.  In order to explain the meaning and the results of this program we recall first the classical theory of polynomial identities of which the present paper is a natural development.\medskip

 Recall that, given an associative algebra $R$  over an infinite field $F$, a {\em polynomial identity} of $R$  is an element $f(x_1,\ldots,x_\ell )\in F\langle x_1,\ldots,x_\ell ,\ldots\rangle$ of the free algebra in countably many variables which vanishes under all evaluations  in $R$, $x_i\mapsto r_i\in R$  of the variables $x_i$.

The set $I$ of polynomial identities of $R$  is an ideal of $F\langle x_1,\ldots,x_\ell ,\ldots\rangle$  with the special property of being closed under all substitutions  of the variables  $x_i\mapsto g_i\in F\langle x_1,\ldots,x_\ell ,\ldots\rangle$.  Such an ideal is called a $T$--ideal. Conversely any $T$--ideal $I$ is the ideal of polynomial identities of some algebra $R$, and we can take as $R= F\langle x_1,\ldots,x_\ell ,\ldots\rangle/I$.\bigskip

We want to investigate now  the structure of prime $T$--ideals (in the sense of $\Sigma$--algebras) in the free $n$ Cayley Hamilton algebra $F_{\Sigma,n}\langle X\rangle$. For simplicity let us assume that all algebras are over an infinite field, though this is not strictly necessary as the reader may show.

If $P$  is such an ideal then it is the $T$--ideal of $\Sigma$--identities  of the prime $\Sigma$--algebra    $R:=F_{\Sigma,n}\langle X\rangle/P$. These identities are also satisfied by the algebra of fractions of $R$ so that finally a prime $T$--ideal of $\Sigma$--identities is the  $T$--ideal of $\Sigma$--identities  of a simple $\Sigma$--algebra $S$ with  $\sigma(S)=L\supset F$ a field. Replace $S$, with   $\check  S:=S\otimes_L\check  L$  where $\check  L$ is an algebraic closure of $L$.  Although this algebra is not necessarily simple the $\Sigma$--identities with coefficients in $F$ of $S$ and $\check S$, coincide.

If $\bar L\subset \check L$ is the separable closure of $L$ we have that  $\bar S:=S\otimes_L\bar L= \oplus_iM_{m_i}(G_i)$ with $G_i$ purely inseparable over $\bar L$ and furthermore there is a minimum $k_i$  with $g^{p^{k_i}}\in \bar L, \ \forall g\in G_i$.

The structure of $\check  S:=\bar S\otimes_{\bar  L}\check L= \oplus_iM_{m_i}(G_i\otimes_{\bar  L}\check L)$ can be deduced from Remark \ref{allx}. The commutative algebras $G_i\otimes_{\bar  L}\check L$  are obtained from the algebraically closed field $\check L$  by adding nilpotent elements $a_i ,\ i\in I,$ of degrees $p^{h_i},\ h_i\leq k_i$, i.e. $G_i\otimes_{\bar  L}\check L=\check L[a_i],\ i\in I,\  a_i^{p^{h_i}}=0$. The norm $N$ is a product of the norms $N_i:M_{m_i}(\check L[a_i])\to \check L[a_i]$  is given by $A\mapsto \det(A)^{p^{k_i}}.$  Here $A\in M_{m_i}(\check L[a_i])$ is a matrix  with entries  $a_{i,j}+u_{i,j}, \ a_{i,j}\in \check L$ and $u_{i,j}^{p^{k_i}}=0$.  So that $\det(A)=\det((a_{i,j}))+u,\ u  ^{p^{k_i}}=0$ and finally $\det(A)^{p^{k_i}}=\det((a_{i,j})) ^{p^{k_i}}.$

This means that the norm $N_i$ is obtained from the norm $\det(A)^{p^{k_i}}$ of $M_{m_i}(\check L)$  by extending the coefficients, thus  as far as $\Sigma$--identities, the algebra  $\check  S:=  \oplus_iM_{m_i}(\check L[a_i])$ is equivalent to $ \oplus_iM_{m_i}(\check L )$. Finally, since $F$ is an infinite field, this is PI equivalent to $ \oplus_iM_{m_i}(F)$.

The possible norms on $ \oplus_iM_{m_i}(F)$ are given by Theorem \ref{ssac} and are of the form given by Formula  \eqref{pono}, thus depend only on $q$ integers $m_i, a_i$ with $\sum_{i=1}^qm_i  a_i=n$.

 To complete the classification of prime $T$--ideals we have only to show that two algebras associated to two different sequences of  $q$ integers $m_i, a_i$ with $\sum_{i=1}^qm_i  a_i=n$ are not PI equivalent. 
For this it is enough to see that the corresponding relatively free algebras contain the information on these numbers. This part is identical to that developed in the previous paper \cite{Pr8}  to which we  refer.
\begin{theorem}\label{ttid}
Prime $T$--ideals (in the sense of $\Sigma$--algebras) in the free $n$ Cayley Hamilton algebra $F_{\Sigma,n}\langle X\rangle$ are classified by   sequences of  $q$ integers $m_i, a_i$ with $\sum_{i=1}^qm_i  a_i=n$. To such a sequence one associates the $T$--ideal of identities of the algebra of Formula \eqref{Fma}
$F(\underline m ;\underline a )$.
\end{theorem}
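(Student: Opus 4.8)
The plan is to establish a bijection between prime $T$-ideals of $F_{\Sigma,n}\langle X\rangle$ and sequences $(\underline m;\underline a)$ with $\sum_i m_i a_i=n$, by attaching to each prime $T$-ideal a canonical \emph{split} semisimple model whose $\Sigma$-identities it equals, and then separating the two halves of the assertion: \emph{completeness}, that every prime $T$-ideal arises from some $F(\underline m;\underline a)$, and \emph{injectivity}, that distinct sequences give distinct $T$-ideals. Almost all of the completeness half is already assembled from the structure theory developed above, so the plan is to organize it and isolate the one genuinely positive-characteristic point.

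First I would show completeness. A prime $T$-ideal $P$ is the ideal of $\Sigma$-identities of the prime $\Sigma$-algebra $R:=F_{\Sigma,n}\langle X\rangle/P$; passing to the ring of fractions via Corollary \ref{stpr} replaces $R$ by a $\sigma$-simple $n$ Cayley--Hamilton algebra $S$ with $\sigma(S)=L\supset F$ a field, without changing the $\Sigma$-identities with coefficients in $F$. Theorem \ref{ssre1} together with Theorem \ref{trach} then describes $S$, and after base change to an algebraic closure $\check L$, keeping track of the separable closure $\bar L$, one writes $\check S=\oplus_i M_{m_i}(G_i\otimes_{\bar L}\check L)$ with each $G_i$ purely inseparable over $\bar L$. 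The crucial reduction, which is the heart of the positive-characteristic case, is that the inseparable factors contribute nothing to the $\Sigma$-identities: by Remark \ref{allx} the algebras $G_i\otimes_{\bar L}\check L$ are copies of $\check L$ with nilpotent elements adjoined, and the component norm $\det(A)^{p^{k_i}}$ kills those nilpotents, so $\check S$ is $\Sigma$-equivalent to $\oplus_i M_{m_i}(\check L)$ and hence, since $F$ is infinite, PI-equivalent to $\oplus_i M_{m_i}(F)$. Theorem \ref{ssac} then forces the norm into the shape \eqref{pono}, yielding the data $(m_i,a_i)$ with $\sum_i m_i a_i=n$, and identifying $P$ with the $T$-ideal of $F(\underline m;\underline a)$.

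Conversely I would verify that the assignment from sequences to $T$-ideals is well defined. Each $F(\underline m;\underline a)$ of \eqref{Fma} is, as noted just after that formula, an $n$ Cayley--Hamilton algebra which is simple as a $\Sigma$-algebra, so its $\sigma$-algebra is a field by Corollary \ref{CHC2}(3) and its $\Sigma$-identities with coefficients in $F$ form a prime $T$-ideal. Combined with the previous paragraph this shows that $P\mapsto(\underline m;\underline a)$ and the map $(\underline m;\underline a)\mapsto F(\underline m;\underline a)$ land in the correct targets and are mutually inverse at the level of PI-classes.

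The remaining step, which I expect to be \textbf{the main obstacle}, is injectivity: two sequences $(m_i,a_i)$ and $(m_i',a_i')$ with $\sum_i m_i a_i=\sum_i m_i' a_i'=n$ must yield PI-inequivalent algebras. I would prove this exactly as in the characteristic-$0$ treatment of \cite{Pr8}, by recovering the numerical invariants from the relatively free $\Sigma$-algebra: the multiplicities $m_i$ of the irreducible matrix blocks and the degrees $a_i$ of the component norms (as in the multiplicative decomposition of Proposition \ref{hh}) are genuine invariants of the variety, so the data is determined by the $T$-ideal. Since the passage to positive characteristic has already been absorbed into the equivalence $\check S\sim\oplus_i M_{m_i}(F)$ established above, this final step is formally identical to the characteristic-$0$ argument, and the plan is to invoke that argument verbatim rather than reprove it.
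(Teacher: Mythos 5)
Your proposal is correct and follows essentially the same route as the paper: reduction of a prime $T$--ideal to the $\Sigma$--identities of a $\sigma$--simple algebra via the ring of fractions (Corollary \ref{stpr}), base change to $\check L$ with the observation that the norm $\det(\cdot)^{p^{k_i}}$ annihilates the purely inseparable (nilpotent) part so that $\check S$ is $\Sigma$--equivalent to $\oplus_i M_{m_i}(\check L)$ and then PI--equivalent to $\oplus_i M_{m_i}(F)$, followed by Theorem \ref{ssac} to extract the data $(\underline m;\underline a)$, and finally the deferral of the separation of distinct sequences to the relatively free algebra argument of \cite{Pr8}. The only addition is your explicit check that each $F(\underline m;\underline a)$ yields a prime $T$--ideal, which the paper leaves implicit in the remark following Formula \eqref{Fma}.
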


\bibliographystyle{amsalpha}

\end{document}